\newcommand{\R}{\mathbb{R}}
\newcommand{\ov}{\overline}
\newcommand{\bv}{\bm{v}}
\newcommand{\pd}{\partial}
\renewcommand{\div}{\, \mathrm{div} \,}
\newcommand{\I}{\mathbb{I}}
\newcommand{\bo}{\bm{\omega}}
\newcommand{\bnu}{\bm{\nu}}
\newcommand{\eps}{\varepsilon}
\newcommand{\D}{\mathrm{D}}
\newcommand{\W}{\mathrm{W}}
\newcommand{\curl}{\, \mathrm{curl}\, }
\newcommand{\bmu}{\bm{\mu}}
\newcommand{\nS}{\nabla_\Gamma}
\newcommand{\divS}{\,\mathrm{div}_\Gamma\,}
\newcommand{\pdnu}{\pd_{\bnu}}
\newcommand{\bu}{\bm{u}}
\newcommand{\nd}[1]{\pd_t^{\bullet}{#1}}
\newcommand{\inn}[2]{\langle #1, #2 \rangle}
\newcommand{\C}{\mathbb{C}}
\newcommand{\HH}{\mathbb{H}}
\newcommand{\VV}{\mathbb{V}}
\newcommand{\WW}{\mathbb{W}}
\newcommand{\UU}{\mathbb{U}}
\theoremstyle{plain}
\newtheorem{thm}{Theorem}[section]
\newtheorem{lem}[thm]{Lemma}
\newtheorem{remark}{Remark}[section]
\newtheorem{defn}{Definition}[section]
\newtheorem{assump}{Assumption}[section]
\numberwithin{equation}{section}
\title{On a phase field model for binary mixtures of micropolar fluids with non-matched densities and moving contact lines}
\author{Kin Shing Chan \footnotemark[1] \and Baoli Hao \footnotemark[2] \and Kei Fong Lam \footnotemark[1] \and Bj\"orn Stinner \footnotemark[3] } 
\date{ }
\begin{document}

\maketitle

\renewcommand{\thefootnote}{\fnsymbol{footnote}}
\footnotetext[1]{Department of Mathematics, Hong Kong Baptist University, Kowloon Tong, Hong Kong \tt(22482951@life.hkbu.edu.hk,akflam@hkbu.edu.hk)}
\footnotetext[2]{Department of Applied Mathematics, Illinois Institute of Technology, Chicago, IL \tt(bhao2@hawk.iit.edu)}
\footnotetext[3]{Department of Mathematics, University of Warwick, Coventry, U.K. \tt(bjorn.stinner@warwick.ac.uk)}

\begin{abstract}
We introduce a new phase field model for binary mixtures of incompressible micropolar fluids, which are among the simplest categories of fluids exhibiting internal rotations. The model fulfils local and global dissipation inequalities so that thermodynamic consistency is guaranteed. Our model consists of a Navier--Stokes--Cahn--Hilliard system for the fluid velocity, pressure, phase field variable and chemical potential, coupled to an additional system of Navier--Stokes type for the micro-rotation. Our model accounts for non-matched densities as well as moving contact line dynamics, and serve as a generalisation to earlier models for binary fluid flows based on a volume averaged velocity formulation. We also establish the existence of global weak solutions in three spatial dimensions for the model equipped with singular logarithmic and double obstacle potentials.

\end{abstract}

\noindent \textbf{Key words. } Micropolar fluids, phase field model, Cahn--Hilliard equation, Navier--Stokes equation, moving contact line \\

\noindent \textbf{AMS subject classification. } 35A01, 35K35, 35Q30, 35Q35, 76D03, 76D45, 76T06

\section{Introduction}\label{sec:intro}
Conventional \emph{non-polar} fluids assume each fluid particle is not subjected to body torque forces, and via the conservation of angular momentum, the stress tensor that determines the state of stress at a given point must be symmetric. For fluids such as ferrofluids, blood flows, liquid crystals and bubbly liquids, whose particles possess an internal angular momentum separated from the external angular momentum brought about by rigid motions of the whole fluid, the well studied Navier--Stokes description from classical hydrodynamics is inadequate since these internal rotations cannot be capture without a non-symmetric stress tensor and new equations.

In a series of works which are well summarized in \cite{Eringen,Eringen1,Eringen2}, A.~Cemal Eringen proposed the concept of microcontinuum field theories to explore materials whose particles possess orientations. A hierarchy of theories starts with the class of micropolar continuum where each material point is endowed with three rigid directors, then the class of microstretch continuum where each material point have three microrotations and one microstretch, to the most general class of micromorphic continuum where each material point carries three defromable directors. The introduction of these directors in Eringen's theories necessitates new balance laws to complement the classical laws of mass, momentum and energy conservation. The conservation of micro-inertia is the missing component that complements the existing set of conservation laws to determine the evolution of the material body. 

We focus on the simplest case of isotropic, isothermal and incompressible micropolar fluids, where the equations amount to a simple but significant generalisation of the conventional Navier--Stokes equations. Let $\bu$ denote the fluid velocity, $p$ the pressure, $\eta$ the viscosity and $\bo$ the micro-rotation. Then, the balance of mass, momentum and moment of momentum for micropolar fluids reads as \cite{Eringen2,Luka}:
\begin{subequations}\label{microNS}
\begin{alignat}{2}
\div \bu & = 0, \label{MNS:div} \\
\pd_t \bu + (\bu \cdot \nabla) \bu & =  (\eta + \eta_r) \Delta \bu - \nabla p + 2 \eta_r \curl \bo, \label{MNS:mom} \\
\pd_t \bo + (\bu \cdot \nabla) \bo & = (c_0 + c_d - c_a) \nabla \div \bo + (c_a + c_d) \Delta \bo + 2 \eta_r(\curl \bu - 2 \bo), \label{MNS:w}
\end{alignat}
\end{subequations}
where $c_0$, $c_a$, $c_d$ are the coefficients of angular viscosities, and $\eta_r$ is the dynamic micro-rotation viscosity. The non-symmetry of the stress tensor can be attributed to the appearance of the term $2 \eta_r \curl \bo$ in the momentum balance \eqref{MNS:mom}. We note that setting $\eta_r = 0$ decouples the Navier--Stokes system \eqref{MNS:div}-\eqref{MNS:mom} with the balance law for $\bo$, while in the formal limit $\eta_r$ tends to infinity we obtain from \eqref{MNS:w} the relation $\bo = \frac{1}{2} \curl \bu$, which connects the micro-rotation with the well-known fluid vorticity.

As the balance law for $\bo$ is similar to the Navier--Stokes equations, many mathematical results concerning weak/strong solvability, long-time behavior, global attractors, optimal time decay rates and blow-up criteria have been established in the literature, see e.g.~\cite{Boldrini,Chen,LiuZhang,Luka,Luka2D} and the references cited therein. Of notable interest is a collect of works (of which we cite \cite{Devakar,Ramkissoon,Yadav}) that consider mixtures of incompressible micropolar and Newtonian fluids in simple geometries, such as in a long channel, which can be seen as idealized situations encountered in many industrial and medical applications \cite{Ariman}. 
 
The main focus of this work is to develop new models for binary mixtures of micropolar fluids based on the phase field approach. In the case of non-polar fluids, the basic phase field model is the Model H of Hohenberg and Halperin \cite{Hohenberg}, which couples a Cahn--Hilliard equation with a Navier--Stokes system to describe the dynamics of a binary mixture of incompressible, immiscible Newtonian fluids with the same mass densities. To generalize to the case of non-matched densities, one would have to select a mixture velocity, which can be either the mass averaged velocity \eqref{ma:velo} used e.g.~in \cite{Aki,Lowengrub,SR:Zee} or the volume averaged velocity \eqref{va:velo} used e.g.~in \cite{AGG,Boyer,Ding}. We choose the latter due to the mathematical advantage of having a solenoidal mixture velocity, which permits the model to be better amenable to further theoretical analysis compared to models based on the mass averaged velocity, see e.g.~\cite{2023CHNS} for a discussion. Our phase field model for a binary mixture of micropolar fluids reads as
\begin{subequations}\label{microNSCH}
\begin{alignat}{2}
\label{model:div} \div \bu & = 0, \\
\label{model:phi} \pd_t \phi + \bu \cdot \nabla \phi & = \div (m(\phi) \nabla \mu), \\
\label{model:mu} \mu & = \tfrac{\sigma}{\eps} F'(\phi) - \sigma \eps \Delta \phi, \\
\label{model:u} \rho(\phi) (\pd_t \bu + (\bu \cdot \nabla) \bu) & = - \nabla p - \div  ( \sigma \eps \nabla \phi \otimes \nabla \phi  ) \\
\notag & \quad  + \div (2 \eta(\phi) \D \bu + 2 \eta_r(\phi) \W \bu)  \\ 
\notag & \quad + 2 \curl(\eta_r(\phi) \bo) + \tfrac{\overline{\rho}_1 - \overline{\rho}_2}{2} \big ( m(\phi) \nabla \mu \cdot \nabla \big ) \bu, \\
\label{model:w} \rho(\phi)(\pd_t \bo + (\bu \cdot \nabla) \bo) & = \div (c_0(\phi) (\div \bo) \I + 2c_d(\phi) \D \bo +2 c_a(\phi) \W \bo)  \\
\notag  & \quad + 2 \eta_r(\phi)(\curl \bu - 2 \bo) +  \tfrac{\overline{\rho}_1 - \overline{\rho}_2}{2} \big ( m(\phi) \nabla \mu \cdot \nabla \big ) \bo, 
\end{alignat}
\end{subequations}
which is an extension to the model of Abels, Garcke and Gr\"un \cite{AGG} to the case of micropolar fluids. In the above $\bu$ is now the volume averaged velocity, $\phi$ is the phase field variable, $\mu$ is the associated chemical potential, $F'$ is the derivative of a double well function $F$ possessing two equal minima, $\sigma$ is the surface tension, $\eps$ is a small parameter related to the interfacial thickness, $m(\phi)$ is a mobility function and $\overline{\rho}_i$, for $i = 1, 2$, is the constant mass densities of the $i$th fluid. The phase field variable typically takes values in between the two equal minima of $F$. For example the classical quartic potential $F(s) = \frac{1}{4}(s^2-1)^2$ has two equal minima at $s = \pm 1$, and we can assign the region occupied by fluid 1 as the set $\{\phi = 1\}$, the region occupied by fluid 2 as $\{\phi = -1\}$ and the interfacial layer separating these regions as $\{ |\phi| < 1 \}$. The mass density $\rho$ is now a function of $\phi$ and is given by the algebraic relation, see also \eqref{hat:rho:def}:
\begin{align}\label{intro:rho}
\rho(\phi) = \frac{\overline{\rho}_1 - \overline{\rho}_2}{2} \phi + \frac{\overline{\rho}_1 + \overline{\rho}_2}{2} \quad \text{ for } \phi \in [-1,1].
\end{align}
Furthermore, $\D \bu = \frac{1}{2}(\nabla \bu + (\nabla \bu)^{\top})$ is the symmetric strain tensor, while $\W \bu = \frac{1}{2}(\nabla \bu - (\nabla \bu)^{\top})$ is the anti-symmetric strain tensor. Notice that the presence of $\W \bu$ means the stress tensor is no longer symmetric.

We note the resemblance of \eqref{microNSCH} to the single component model \eqref{microNS}, whereby setting $\phi$ and $\mu$ to be constant, setting $\overline{\rho}_1 = \overline{\rho}_2$, and neglecting the Cahn--Hilliard submodel \eqref{model:phi}-\eqref{model:mu} leads to \eqref{microNS}. For a binary mixtures we allow both fluids to have possibly different viscosity coefficients $\{\eta_i, \eta_{r,i}, c_{0,i}, c_{d,i}, c_{a,i}\}_{i=1,2}$, and hence in \eqref{microNSCH} the functions $\eta(\phi)$, $\eta_r(\phi)$, $c_0(\phi)$, $c_d(\phi)$ and $c_a(\phi)$ serve to interpolate between the two different sets of fluid viscosities in a smooth fashion. In \eqref{model:u}, the term $\div (\sigma \eps \nabla \phi\otimes \nabla \phi)$ in \eqref{model:u} accounts for capillary forces due to surface tension, while $\tfrac{\overline{\rho}_1 - \overline{\rho}_2}{2} \big ( m(\phi) \nabla \mu \cdot \nabla \big ) \bu$ describes transport by the relative flux $\bm{J} = -\tfrac{\overline{\rho}_1 - \overline{\rho}_2}{2}  m(\phi) \nabla \mu$ related to the diffusion of the components \cite{AGG}, which vanishes in the case of matched densities $\overline{\rho}_1 = \overline{\rho}_2$. 

To complete the model \eqref{microNS} we furnish it with appropriate boundary conditions. The classical ones are the homogeneous Neumann conditions for $\phi$ and $\mu$, and homogeneous Dirichlet boundary conditions for $\bu$ and $\bo$. The Neumann condition for $\mu$ provides the conservation of mass expressed in the form of the mean value of $\phi$, while the Neumann condition for $\phi$ describes a static contact of angle of $\pi/2$ between the domain boundary $\pd \Omega$ and the fluid-fluid interface.  Without micro-rotation, i.e., $\bo = \bm{0}$, the resulting Navier--Stokes--Cahn--Hilliard model has been studied extensively by many authors. We refer to \cite{ADG,ADG2} for global weak solutions, to \cite{AG1,AG2} for local strong solutions, \cite{2023CHNS,AG4} for regularity properties, to \cite{CCGMG1,CCGMG2,CCGMG3} for long time behavior in the context of attractors, and to \cite{AbelsTera,Frigeri1,Frigeri2,CCGAGMG} for similar results concerning a non-local variant of the Navier--Stokes--Cahn--Hilliard system, see Section~\ref{sec:nonlocal} below.  

On the other hand, for situations where the interaction between the fluid-fluid interface and the domain boundary is significant, the homogeneous Neumann condition for $\phi$ is too restrictive and the dynamics occuring near the domain boundary can be inconsistent with the no-slip boundary condition for $\bu$. The contact line, as the intersection of the fluid-fluid interface and the domain boundary, may be mobile and the contact angles between the contact line and the domain boundary may vary in time. Various remedies have been proposed, see e.g.~\cite{Mhei,Djac2,QianPhys,Qian2,Qian,Psepp,WQR1,WQR2,WQR3} and the references cited therein, and for our model \eqref{microNSCH} we consider, as in \cite{QianPhys,Qian2}, a generalized Navier boundary condition (GNBC) for $\bu$, together with a dynamic contact line condition for $\phi$, which can be derived via a Coleman--Noll procedure. This reads as
\begin{subequations}\label{micro:bc}
\begin{alignat}{2}
\label{model:bc:1} & \pdnu \mu = 0, \quad \bu \cdot \bm{\nu} = 0, \quad \bo \cdot \bm{\nu} = 0, \\
\label{model:bc:2} & \pd_t \phi + \bu_\tau \cdot \nS \phi = - \mathcal{L}(\phi), \\
\label{model:bc:3} & \mathcal{L}(\phi) = - \Delta_\Gamma \phi + \pdnu \phi + \hat{G}'(\phi), \\
\label{model:bc:4} & \bu_\tau + \Big ( 2 \bm{\nu} \cdot [ \eta(\phi) \D \bu + \eta_r(\phi) \W \bu]  + 2 \eta_r(\phi) \bo \times \bm{\nu} \Big )_\tau = \mathcal{L}(\phi) \nS \phi, \\
\label{model:bc:5} & \bo_\tau + \Big ( 2 \bm{\nu} \cdot [ c_d(\phi) \D \bo + c_a(\phi) \W \bo] \Big )_\tau = \bm{0},
\end{alignat}
\end{subequations}
where $\bm{\nu}$ is the external unit normal on the boundary $\pd \Omega = \Gamma$, $\pdnu f= \nabla f \cdot \bm{\nu}$ is the normal derivative, $\nS$ is the surface gradient operator, $\Delta_\Gamma$ is the Laplace--Beltrami operator, $\bu_\tau := \bu - (\bu \cdot \bm{\nu}) \bm{\nu}$ is the tangential component of $\bu$, and $\mathcal{L}(\phi)$ can be interpreted as the boundary chemical potential associated to an interfacial free energy density of Ginzburg--Landau type, see \eqref{chi:defn}. Equation \eqref{model:bc:1} summarizes the no-flux condition for $\mu$ and the no-penetration conditions for $\bu$ and $\bo$, while \eqref{model:bc:2} is a dynamic boundary condition of Allen--Cahn type (see the recent works \cite{GLW,AG3,KnopfStange} for an alternative Cahn--Hilliard type). The generalized Navier boundary condition (GNBC) with the uncompensated Young stress for $\bu$ is formulated as \eqref{model:bc:4} with additional contributions involving $\W \bu$ and $\bo \times \bm{\nu}$ due to micropolar effects, while \eqref{model:bc:5} is a frictionless boundary condition with relaxation for $\bo$, cf.~\cite{Ebenbeck}. 

Our aim is to analyze the new phase field model \eqref{microNSCH} subjected to the boundary conditions \eqref{micro:bc} and appropriate initial conditions. In the case of matched densities $\overline{\rho}_1 = \overline{\rho}_2$ and absence of micropolar effects $\bo = \bm{0}$, the resulting Navier--Stokes--Cahn--Hilliard model with GNBC has been studied in \cite{GGM} where the existence of global weak solutions and convergence to equilibrium are established. This is later generalised to the non-matched density case in \cite{GGW} where the existence of global weak solutions are shown when the Cahn--Hilliard submodel is furnished with a singular potential $F$ of logarithmic type:
\begin{align}\label{intro:log}
F(\phi) = \frac{\theta}{2}((1+\phi) \ln(1+\phi) + (1-\phi) \ln(1-\phi)) - \frac{\theta_c}{2} \phi^2,
\end{align}
with constants $0 < \theta < \theta_c$, in order to ensure the phase field variable $\phi$ stays in the interval $[-1,1]$, see also the recent contribution \cite{GLW} where the dynamic boundary condition \eqref{model:bc:2}-\eqref{model:bc:3} is replaced with a Cahn--Hilliard type system. The requirement for $\phi$ to stay within the physically relevant interval $[-1,1]$ is primarily due to the fact that the algebraic expression \eqref{intro:rho} for the mass density may not make sense for $\phi \notin [-1,1]$ and may not be a priori bounded from below by a positive constant. Hence, the aforementioned references on the analytical study of the Abels--Garcke--Gr\"un model often employ the logarithmic potential \eqref{intro:log} or the double obstacle potential:
\begin{align}\label{intro:obs}
F(\phi) = \begin{cases}
\frac{1}{2}(1-\phi^2) & \text{ if } |\phi| \leq 1, \\
+\infty & \text{ otherwise}.
\end{cases}
\end{align}
The strategy for establishing the existence of global weak solutions to \eqref{microNSCH}-\eqref{micro:bc} follows along a similar procedure to that of \cite{GGW,GLW}. We first study an approximate problem that is obtained by extending the definition of the mass density function $\rho$ from $[-1,1]$ to the whole of $\R$ in a fashion that maintains its positivity and boundedness, approximating the singular potential $F$ by a sequence of smooth polynomial functions, and introducing regularisation terms in the form of $\pd_t \phi$ in \eqref{model:mu}, non-Newtonian stress-like terms $-\div (|\nabla \bu|^{q-2} \nabla \bu) + |\bu|^{q-2} \bu$ and an extra term of the form $\frac{R}{2}\bu$ to \eqref{model:u} where $R = -\frac{m(\phi)}{2} [\nabla \rho'(\phi) \cdot \nabla \mu]$, and similarly to \eqref{model:w}. The extra terms ensure that a dissipation energy inequality for this approximation problem is preserved as we extend $\rho$ from $[-1,1]$ to $\R$. Existence of weak solutions to the approximate problem is demonstrated via an implicit time discretisation combined with the Leray--Schauder principle, where we have to extend the arguments in previous works to account for the coupling with the micropolar equation \eqref{model:w}. Then, passing to the limit first to obtain the singular potential $F$ ensures $\phi \in [-1,1]$ and $R = 0$, and later in another limiting process we remove the other regularisation terms to recover the original system.

The structure of the paper is as follows: In Section \ref{sec:md} we derive the new micropolar model \eqref{microNSCH}-\eqref{micro:bc} from conservation laws and demonstrate thermodynamical consistency via local and global dissipation inequalities. In Section \ref{sec:analysis} we establish the existence of global weak solutions to \eqref{microNSCH}-\eqref{micro:bc} in three spatial dimensions where $F$ is a singular potential of logarithmic type. In Section \ref{sec:obstacle} we perform a deep quench limit $\theta \to 0$ in \eqref{intro:log} to obtain global weak solutions to \eqref{microNSCH}-\eqref{micro:bc} where $F$ is the double obstacle potential \eqref{intro:obs}.

\section{Model derivation} \label{sec:md}
We consider an open bounded domain $\Omega \subset \R^3$ containing a mixture of two immiscible, incompressible, isothermal and isotropic micropolar fluids with possibly different mass densities. Let $V$ denote the total volume of the mixture and $V_i$, $i \in \{1,2\}$ the volume of the $i$th fluid. Then, the \emph{volume fraction} $\phi_i$ of the $i$th fluid is the ratio $\phi_i := V_i/V$, so that $\phi_1 + \phi_2 = 1$.  Similarly, let $M$ denote the total mass of the mixture and $M_i$, $i \in \{1,2\}$ the mass of the $i$th fluid. Then, the \emph{mass concentration} $c_i$ of the $i$th fluid is the ratio $c_i = M_i/M$, so that $c_1 + c_2 = 1$.

Since we assumed each fluid is incompressible, we can prescribe a constant specific mass density $\ov{\rho}_i := M_i/V_i$, $i \in \{1,2\}$, as well as a partial mass density $\rho_i := M_i/V$ which need not be spatially constant.  The mass density of the mixture $\rho$ is the sum of the two partial mass densities, and is related to other densities via the relations
\begin{align}\label{phi:c}
\rho_i = \ov{\rho}_i \phi_i = \rho c_i, \quad i \in \{1,2\}.
\end{align}
We now consider an order parameter $\phi$ defined as the difference in volume fractions $\phi := \phi_1 - \phi_2$, and assume the mixture density $\rho$ to be a function of $\phi$.  It is possible to consider the mixture density as a function of a different order parameter based on the difference in mass concentration $c := c_1 - c_2$, see for instance \cite{AGG,Lowengrub}. From the conditions $\phi_1 + \phi_2 = 1$ and $\phi_1, \phi_2 \geq 0$, we deduce that $\phi \in [-1,1]$ and obtain the relations 
\begin{align}\label{rhoi:phi}
\rho_1 = \frac{\ov{\rho}_1}{2}(1+\phi), \quad \rho_2 = \frac{\ov{\rho}_2}{2}(1-\phi),
\end{align}
leading to the algebraic equation for the mixture density $\rho = \hat{\rho}(\phi)$:
\begin{align}\label{hat:rho:def}
\hat{\rho}(\phi) = \frac{\ov{\rho}_1}{2} (1 + \phi) + \frac{\ov{\rho}_2}{2} (1-\phi) = \frac{\ov{\rho}_1 - \ov{\rho}_2}{2} \phi + \frac{\ov{\rho}_1 + \ov{\rho}_2}{2}
\end{align}
valid for $\phi \in [-1,1]$. Note that $\rho$ and $\hat{\rho}(\phi)$ coincide only when $\phi \in [-1,1]$.

Let $\bv_i$, $i \in \{1,2\}$ denote the velocity of the $i$th fluid. For the choice of the mixture velocity $\bv$, there are two candidates: The first is the volume averaged velocity \cite{AGG,Boyer,Ding} defined as
\begin{align}\label{va:velo}
\bv_v := \phi_1 \bv_1 + \phi_2 \bv_2,
\end{align}
which is divergence-free, leading to several analytical and numerical advantages. The second is the mass averaged velocity \cite{Aki,Lowengrub,SR:Zee} defined as
\begin{align}\label{ma:velo}
\bv_m := \frac{1}{\rho} (\rho_1 \bv_1 + \rho_2 \bv_2),
\end{align}
which is not solenoidal. In this work our model is based on the volume averaged velocity $\bv_v$, and thus we use the notation $\bu = \bv_v$ when there is no ambiguity. While it has been pointed out that in \cite{Eikelder,Yang} that the difference between these two can be quite significant, the advantage of the volume averaged model is that it is better amenable to further mathematical analysis compared to the mass averaged model. A derivation and analysis for the mass averaged model will be the subject of a future work.

\paragraph{Notation and convention.}
For the subsequent sections we make use of the Einstein summation convention and neglect the basis vector elements. 
\begin{itemize}
\item For a vector $\bv = (v_i)$ and a second order tensor $\bm{A} = (A_{ij})$, the gradient $\nabla \bv$ and divergence $\div \bm{A}$ are defined as
\[
(\nabla \bv)_{ij} = \pd_i v_j = \frac{\pd v_j}{\pd x_i}, \quad (\div \bm{A})_j = \pd_i A_{ij}.
\]
Meanwhile the vector $\bm{v} \cdot \bm{A}$ is defined as
\[
(\bm{v} \cdot \bm{A})_j = v_i A_{ij}.
\]
The Frobenius product $\bm{A}: \bm{B}$ of two second order tensors $\bm{A}$ and $\bm{B}$ is defined as
$\bm{A} : \bm{B} = A_{ij} B_{ij}$.
\item In three spatial dimensions, the entries of the third order Levi-Civita tensor $\bm{\eps} = (\eps_{ijk})$ are defined as
\[
\eps_{ijk} = \begin{cases}
1 & \text{ if } (i,j,k) \text{ is } (1,2,3), (2,3,1) \text{ or } (3,1,2), \\
-1 & \text{ if } (i,j,k) \text{ is } (3,2,1), (1,3,2) \text{ or } (2,1,3), \\
0 & \text{ if } i = j, \text{ or } j = k, \text{ or } k = i.
\end{cases}
\]
Then, in three spatial dimensions, the following properties are valid:
\begin{align}\label{eps:prop}
\eps_{ljk} \eps_{mjk} = 2 \delta_{lm}, \quad \eps_{jik} \eps_{jlm} = \delta_{il} \delta_{km} - \delta_{im} \delta_{kl}.
\end{align}
Furthermore, the cross product $\bm{a} \times \bm{b}$ between two vectors $\bm{a}$ and $\bm{b}$, as well as the curl of a vector $\bv$ are defined as
\[
(\bm{a} \times \bm{b})_j = \eps_{jkl} a_k b_l, \quad (\curl \bv)_j = \eps_{jkl} \pd_k v_l = (\nabla \times \bv)_j.
\]
Then, we have the following integration by parts formula involving the curl operator:
\begin{equation}\label{IBP:curl}
\begin{aligned}
\int_\Omega \curl \bm{a} \cdot \bm{b} \, dx & = \int_{\Omega} \bm{a} \cdot \curl \bm{b} \, dx + \int_{\pd \Omega} (\bm{a} \times \bm{b}) \cdot \bnu \, dS \\
& = \int_{\Omega} \bm{a} \cdot \curl \bm{b} \, dx - \int_{\pd \Omega}  (\bm{a} \times \bnu) \cdot \bm{b} \, dS.
\end{aligned}
\end{equation}
For a skew second order tensor $\bm{T}$, i.e., $T_{ij} = - T_{ji}$, its antisymmetric vector $\bm{T}_x$ is defined as
\begin{align}\label{defn:Tx}
(\bm{T}_x)_j = \eps_{jkl} T_{kl}.
\end{align}
\end{itemize}

\subsection{Balance laws}
Assuming no mass product or conversion between the constituent fluids, the mass balance for the individual mass density in local form reads as
\begin{align}\label{rho:i:eq}
\pd_t \rho_i + \div (\rho_i \bv_i) = 0 \quad \text{ for } i \in \{1,2\}.
\end{align}
We introduce diffusive velocities $\bm{J}_1$ and $\bm{J}_2$ with respect to $\bu$ by
\begin{equation}\label{vol:av:diff:velo}
\begin{aligned}
\bm{J}_1 & := \rho_1 (\bv_1 - \bu) = \rho_1 ((1 - \phi_1) \bv_1 - \phi_2 \bv_2) = \rho_1 \phi_2 (\bv_1 - \bv_2), \\
\bm{J}_2 &: = \rho_2(\bv_2 - \bu) = \rho_2 \phi_1 (\bv_2 - \bv_1),
\end{aligned}
\end{equation}
so that  \eqref{rho:i:eq} can be equivalently expressed as
\begin{align}\label{mass:bal:bu}
\pd_t \rho_1 + \div (\rho_1 \bu + \bm{J}_1) = 0, \quad \pd_t \rho_2 + \div (\rho_2 \bu + \bm{J}_2) = 0.
\end{align}
Dividing the individual mass balances \eqref{mass:bal:bu} by $\ov{\rho}_i$ respectively and taking the difference leads to the equation for $\phi$:
\begin{align}\label{bu:phi:equ}
\pd_t \phi + \div (\phi \bu) + \div \bm{J}_{\phi} = 0, \quad \bm{J}_\phi := \frac{\bm{J}_1}{\ov{\rho}_1} - \frac{\bm{J}_2}{\ov{\rho}_2}.
\end{align}
From the definition of $\bu = \phi_1 \bv_1 + \phi_2 \bv_2$ and the individual mass balance \eqref{rho:i:eq}, we infer the divergence-free condition for $\bu$:
\[
\div \bu = \div \Big ( \frac{\rho_1}{\ov{\rho}_1} \bv_1 + \frac{\rho_2}{\ov{\rho}_2} \bv_2 \Big ) = - \pd_t \Big ( \frac{\rho_1}{\ov{\rho}_1} + \frac{\rho_2}{\ov{\rho}_2} \Big ) = -\pd_t (\phi_1 + \phi_2) = 0.
\]
On the other hand, taking the sum of \eqref{mass:bal:bu} yields the equation for $\rho$:
\begin{align}\label{rho:bu:equ:original}
\pd_t \rho + \div (\rho \bu) + \div \bm{J} = 0, \quad \bm{J} := \bm{J}_1 + \bm{J}_2.
\end{align}
The connection between $\bm{J}$ and $\bm{J}_i$ can be obtained by noticing that 
\[
\frac{\bm{J}_1}{\ov{\rho}_1} + \frac{\bm{J}_2}{\ov{\rho}_2} = \phi_1 (\bv_1 - \bu) + \phi_2 (\bv_2 - \bu) = \bm{0},
\]
which leads to
\[
\bm{J} = \bm{J}_1 + \bm{J}_2 = \frac{\ov{\rho}_1 - \ov{\rho}_2}{2} \Big ( \frac{\bm{J}_1}{\ov{\rho}_1} - \frac{\bm{J}_2}{\ov{\rho}_2} \Big ) + \frac{\ov{\rho}_1 + \ov{\rho}_2}{2} \Big ( \frac{\bm{J}_1}{\ov{\rho}_1} + \frac{\bm{J}_2}{\ov{\rho}_2} \Big ) = \frac{\ov{\rho}_1 - \ov{\rho}_2}{2}  \bm{J}_{\phi}, 
\]
and allows us to express \eqref{rho:bu:equ:original} equivalently as
\begin{align}\label{rho:equ}
\pd_t \rho + \div (\rho \bu) + \frac{\ov{\rho}_1 - \ov{\rho}_2}{2} \div \bm{J}_\phi = 0.
\end{align}
In light of the mathematical analysis we perform in Section \ref{sec:analysis} we express \eqref{rho:equ} in another equivalent form:
\begin{equation}\label{rho:equ:alt}
\begin{aligned}
\pd_t \rho + \div (\rho \bu) & = - \rho'(\phi) \div \bm{J}_\phi = -\div (\rho'(\phi) \bm{J}_\phi) + \bm{J}_\phi \cdot \nabla \rho'(\phi) \\
& =: -\div (\rho'(\phi) \bm{J}_\phi) + R.
\end{aligned}
\end{equation}
Note that if $\phi \in [-1,1]$, then $\nabla \rho'(\phi) = \bm{0}$ and thus the additional term $R$ vanishes. We continue the model derivation without the assumption that $\phi \in [-1,1]$.

We now consider a pseudo-momentum $\bm{m} := \rho \bu$ based on the volume averaged velocity, which differs from the mixture linear momentum $\rho \bm{v}_m$ based on the mass averaged velocity consistent with mixture theory \cite{Bowen,Eikelder,SR:Zee}. Nevertheless, we postulate the pseudo-momentum satisfies the balance law
\begin{align}\label{pseudo:mom:bu}
\pd_t (\rho \bu) + \div (\rho \bu \otimes \bu) = \div (\bm{T} - p \I) + \bm{k},
\end{align}
where $\bm{k}$ is an additional contribution that vanishes when $\phi \in [-1,1]$. Its presence is to account for the effects when $\phi \notin [-1,1]$, which will be relevant in the mathematical analysis we perform in Section \ref{sec:analysis}. Then, with the help of \eqref{rho:equ:alt} yields the equivalent form
\begin{equation}\label{pseudo:mom:bu:2}
\begin{aligned}
\rho \nd \bu & = \rho (\pd_t \bu + (\bu \cdot \nabla) \bu) \\
& = \div \Big ( \bm{T} - p \I + \rho'(\phi) \bm{J}_\phi \otimes \bu \Big ) -  (\rho'(\phi) \bm{J}_\phi \cdot \nabla) \bu + \bm{k} - R \bu,
\end{aligned}
\end{equation}
where $\nd f := \pd_t f + \nabla f \cdot \bu$ denotes the material derivative with respect to the volume averaged velocity $\bu$. The additional term $(\bm{J} \cdot \nabla) \bu$ is a key feature of the model arising from the use of the pseudo-momentum. Then, for a test volume $V$ advected by the volume averaged velocity $\bu$, we postulate the balance of total angular momentum to be
\begin{equation}\label{bu:ang:mom:post}
\begin{aligned}
& \frac{d}{dt} \int_{V(t)} \rho \big ( \bm{x} \times \bu + \bm{j} \big) \, dx \\
& \quad = \int_{\pd V(t)} \bnu \cdot \bm{C} + \bm{x} \times \big [\bnu \cdot \big (\bm{T} - p \I \big )  \big ] \, dS + \int_{V(t)}  \bm{x} \times \bm{k} + \bm{g} \times \bu  + \bm{h} \, dx,
\end{aligned}
\end{equation}
where $\bm{x}$ is an arbitrary point in $\Omega$, $\bm{j}$ denotes the intrinsic angular momentum per unit mass, $\bm{C}$ is the couple stress tensor, $\bnu$ is the outer unit normal to $V$, $\bm{g}$ accounts for sources of angular momentum due to diffusion of the constituent fluids, while $\bm{h}$ is an additional contribution that vanishes when $\phi \in [-1,1]$. We further assume that $\bm{j} = j^* \bo$ where the vector field $\bo$ denotes the angular velocity of the rotation of fluid particles known as the micro-rotation, and $j^*$ is a constant scalar micro-inertia coefficient. Then, by the divergence theorem it follows that 
\begin{equation}\label{ang:mom:bal}
\begin{aligned}
& \frac{d}{dt} \int_{V(t)} \rho \big ( \bm{x} \times \bv + j^* \bo \big ) \, dx \\
& \quad = \int_{V(t)} \div \bm{C} + \bm{x} \times \big ( \div \big ( \bm{T} - p \I \big ) + \bm{k} \big ) + ( \bm{T} - p \I)_x + \bm{g} \times \bu + \bm{h} \, dx,
\end{aligned}
\end{equation}
where in the above calculation we have used the identity
\begin{align*}
\int_{\pd V} \bm{x} \times \big [ \bnu \cdot \bm{T} \big ] \, dS & = \int_{\pd V} \eps_{ijk} x_j T_{pk} \nu_p \, dS  = \int_V \eps_{ijk} \pd_p (x_j T_{pk}) \, dx  \\
& = \int_V \eps_{ijk} T_{jk} + \eps_{ijk} x_j \pd_p T_{pk} \, dx = \int_V \bm{T}_x + \bm{x} \times \div \bm{T} \, dx.
\end{align*}
Notice that the antisymmetric vector of a symmetric tensor vanishes, i.e., $(\I)_x = \bm{0}$. This yields the following the pointwise angular momentum balance law
\[
\bm{x} \times \nd{( \rho \bu)} + j^* \nd \rho \bo + j^* \rho \nd \bo = \div \bm{C} +  \bm{x} \times \big (\div  \big (\bm{T} - p \I \big) +\bm{k} \big ) + \bm{T}_{x} + \bm{g} \times \bu,
\]
and with the help of \eqref{rho:equ:alt} and \eqref{pseudo:mom:bu} this simplifies to 
\begin{align}\label{ang:mom:bu}
 \rho j^* \nd{\bo} = \div \Big ( \bm{C} + \bm{J} \otimes j^* \bo \Big ) + \bm{T}_x -  (\bm{J} \cdot \nabla) j^* \bo + \bm{g} \times \bu + \bm{h} - j^* R \bo.
\end{align}

\subsection{Energy dissipation inequality}
We consider the second law of thermodynamics in the form of a dissipation inequality \cite{Gurtinbk,Gurtin} for a bulk energy density $e$ of the form
\[
e = \frac{\rho}{2} |\bu|^2 + \frac{\rho j^*}{2} |\bo|^2 + \rho \psi,
\]
comprising of a Helmholtz free energy $\rho \psi$, the pseudo kinetic energy $\tfrac{\rho}{2} |\bu|^2$ and the micro-rotation energy $\tfrac{\rho}{2} j^* |\bo|^2$.  To account for the possibility of moving contact lines we introduce the total boundary energy comprising of a free energy density $\chi$ per unit area.  For an arbitrary test volume $V$ advected by the mixture velocity $\bu$ that has a nonempty intersection $\Sigma \cap \pd \Omega$ with the external boundary $\pd \Omega$, we assume an energy dissipation inequality of the form
\begin{align}\label{energy:diss:2}
\frac{d}{dt} \int_{V(t)} e \, dx + \frac{d}{dt} \int_{\Sigma(t)} \chi \, dS \leq W(V(t)) - M(V(t)) + H(\Sigma(t)),
\end{align}
where $W(V)$ denotes the total work done by the macro- and micro-stresses on $V$, $M(V)$ denotes the energy transported into $V$ by diffusion, and $H(\Sigma)$ denotes the total work done on $\Sigma$ and energy transport into $\Sigma$. Following \cite{AGG,GurtinD} we consider
\begin{align*}
W(V) & = \int_{\pd V \setminus \Sigma} [\bnu \cdot (\bm{T} - p \I  + \bm{J} \otimes \bu) ] \cdot \bu + [\bnu \cdot  (\bm{C} + \bm{J} \otimes j^* \bo)] \cdot \bo + \nd \phi \bm{\xi} \cdot \bm{\nu} \, dS, \\
M(V) & = \int_{\pd V \setminus \Sigma} \mu \bm{J}_\phi \cdot \bm{\nu} + \frac{1}{2} (|\bu|^2 + j^* |\bo|^2) \bm{J} \cdot \bnu \, dS, \quad H(\Sigma) = \int_{\pd \Sigma} \bm{t} \cdot \bmu \, dl,
\end{align*}
with a chemical potential $\mu$, a vectorial micro-stress $\bm{\xi}$, a general tangential flux $\bm{t}$ and unit conormal $\bmu$ of $\pd \Sigma$. Let $\nS$ and $\divS$ denote the surface gradient and surface divergence on $\Gamma := \pd \Omega$. Then, by the Reynold's transport theorem, the surface divergence theorem and surface Reynold's transport theorem (see e.g.~\cite{Dziuk}) we infer from \eqref{energy:diss:2}
\begin{equation}\label{bu:diss:ineq}
\begin{aligned}
& \int_{V(t)} \nd{e} - \div \Big ( ( \bm{T} - p \I) \bu + \bm{C} \bo + \nd{\phi} \bm{\xi} - \mu \bm{J}_\phi + \frac{1}{2}(|\bu|^2 + j^* |\bo|^2) \bm{J} \Big ) \, dx  \\
& \quad + \int_{\Sigma(t)} \nd{\chi} + \chi \divS \bu  + [\bnu \cdot (\bm{T} - p \I + \bm{J} \otimes \bu) ] \cdot \bu + [\bnu \cdot (\bm{C} + \bm{J} \otimes j^* \bo)]\cdot \bo \, dS \\
& \quad + \int_{\Sigma(t)} \nd \phi \bm{\xi} \cdot \bnu - \mu \bm{J}_\phi \cdot \bnu - \divS \bm{t} - \frac{1}{2} ( |\bu|^2 + j^* |\bo|^2 ) \bm{J} \cdot \bnu \, dS  \leq 0.
\end{aligned}
\end{equation}

\subsection{Specific choice of free energy}\label{sec:CA}
We make the following constitutive assumption for the free energy density $\rho \psi$
\begin{align}\label{GL}
\rho\psi = \widehat{\rho \psi}(\phi, \nabla \phi).
\end{align}
The standard choice is a Ginzburg--Landau type functional
\begin{align}\label{GL:eg}
\widehat{\rho \psi}(\phi, \nabla \phi) = \sigma \eps A(\nabla \phi) + \frac{\sigma}{\eps} F(\phi),
\end{align}
where $\eps > 0$ is a small parameter associated to the thickness of interfacial layers, $\sigma$ is related to the surface energy density, $A : \R^d \to \R$ is a positively two-homogeneous function, i.e., $A(\lambda \bm{p}) = \lambda^2 A(\bm{p})$ for all $\lambda > 0$ and $\bm{p} \in \R^d$, and $F$ is a double well potential. The commonly used example is the isotropic energy density
\begin{align}\label{isotropic}
A(\bm{p}) = \frac{1}{2} |\bm{p}|^2,
\end{align}
so that $A(\nabla \phi) = \frac{1}{2} |\nabla \phi|^2$, paired with either the quartic potential $F(s) = \frac{1}{4}(s^2-1)^2$, or the thermodynamically relevant logarithmic potential
\begin{align}\label{log:pot}
F(\phi) = \frac{\theta}{2} ((1-\phi) \log(1-\phi) + (1+\phi) \log(1+\phi)) - \frac{\theta_c}{2} \phi^2,
\end{align}
with constants $0 < \theta < \theta_c$. Also relevant is the non-smooth double obstacle potential obtained from the logarithmic potential in the \emph{deep quench} limit $\theta \to 0$:
\[
F(\phi) = \begin{cases}
\frac{1}{2}(1-\phi^2) & \text{ if } |\phi| \leq 1, \\
+\infty & \text{ otherwise}.
\end{cases}
\]
In the case of the isotropic gradient energy \eqref{isotropic}, an alternative choice to \eqref{GL} is the following proposed in \cite{Gia1,Gia2}:
\begin{align}\label{nlGL}
\rho \psi = \widehat{\rho \psi}(\phi) = \frac{\sigma \eps}{4} \int_\Omega K(x-y) |\phi(x) - \phi(y)|^2 \, dy + \frac{\sigma}{\eps} F(\phi),
\end{align}
that was derived from a hydrodynamic limit of a microscopic model for a lattice gas evolving via a Poisson nearest-neighbor process.  In the above, $K : \Omega \times \Omega \to \R$ is a suitable symmetric convolution kernel, such that when rescaled as 
\[
K_\kappa (x-y) := \frac{\rho_\kappa(|x-y|)}{|x-y|^2}
\] 
where $(\rho_\kappa)_{\kappa > 0}$ is a suitable family of mollifiers converging to a Dirac delta, then it has been shown in \cite{Ponce} via the framework of Gamma-convergence that
\[
\lim_{\kappa \searrow 0} \frac{1}{4} \int_{\Omega \times \Omega} K_\kappa(x-y)|\phi_\kappa(x) - \phi_\kappa(y)|^2 \, dx \, dy = \begin{cases}
\int_\Omega \frac{1}{2} |\nabla \phi|^2 \,dx & \text{ if } \phi \in H^1(\Omega), \\[1ex]
+\infty & \text{ otherwise},
\end{cases}
\]
whenever $\phi_\kappa \to \phi$ in $L^2(\Omega)$. Hence, one may also view \eqref{nlGL} as an approximation of \eqref{GL:eg} with \eqref{isotropic}.

For the boundary free energy density $\chi$ we make the following constitutive assumption:
\begin{align}\label{chi:defn}
\chi = \hat{\chi}(\phi, \nS \phi) = \hat{G}(\phi) + \beta B(\nS \phi)
\end{align}
with constant $\beta \geq 0$ and a positively two-homogeneous function $B: \R^d \to \R$. For given constant surface tensions $\gamma_i$, $i =1,2$, between the $i$th fluid and the external boundary, we can consider $\hat{G}$ as an interpolation function between the two values. Examples include (see \cite{QianPhys,Qian})
\begin{align}\label{hatG:eg}
\hat{G}(s) = \frac{\gamma_1 - \gamma_2}{2}s + \frac{\gamma_1+\gamma_2}{2} \; \text{ or } \;  \hat{G}(s) = \frac{\gamma_1 - \gamma_2}{2} \sin \big ( \frac{\pi s}{2} \big )+ \frac{\gamma_1 + \gamma_2}{2},
\end{align}
for $s \in [-1,1]$ and similar to \eqref{isotropic} a commonly used example for $B$ is the isotropic energy density
\[
B(\bm{p}) = \frac{1}{2} |\bm{p}|^2.
\]
For the forthcoming calculations we consider a bulk gradient energy $A$ satisfying the symmetric relation
\begin{align}\label{A':sym}
A'(\bm{p}) \otimes \bm{p} = \bm{p} \otimes A'(\bm{p}) \quad \text{ for all } \bm{p} \in \R^d,
\end{align}
which in turn implies that
\[
|\bm{p}|^2 |A'(\bm{p})|^2 = (A'(\bm{p}) \cdot \bm{p})^2
\]
after taking a scalar product with the vector $\bm{p}$. Hence, $A'(\bm{p}) = a(\bm{p}) \bm{p}$ for some function $a$ that is positively zero-homogeneous, i.e., $a(\lambda \bm{p}) = a(\bm{p})$ for all $\bm{p} \in \R^d$ and $\lambda > 0$.

\subsection{Constitutive assumptions}
A short calculation employing \eqref{rho:equ:alt}, \eqref{pseudo:mom:bu:2} and \eqref{ang:mom:bu} shows (recalling the relation $\bm{J} = \rho'(\phi) \bm{J}_\phi$)
\begin{align*}
& \pd_t \Big ( \frac{\rho}{2}|\bu|^2 + \frac{\rho j^*}{2} |\bo|^2 \Big ) + \bu \cdot \nabla \Big ( \frac{\rho}{2}|\bu|^2 + \frac{\rho j^*}{2} |\bo|^2 \Big ) \\
& \quad = \nd{\rho} \frac{|\bu|^2}{2} + \rho \nd{\bu} \cdot \bu +  \nd{\rho} \frac{ j^* |\bo|^2}{2} + \rho j^*\nd{\bo} \cdot \bo \\
& \quad =  \div \Big ( (\bm{T} - p \I) \bu + \bm{C} \bo + \frac{1}{2}(|\bu|^2 + j^* |\bo|^2) \bm{J} \Big ) - \nabla \bu : \Big ( \bm{T} +  \bm{J} \otimes \bu \Big ) + \bu \cdot \big (\bm{k} - \frac{1}{2} R \bu \big ) \\
& \qquad - \nabla \bo : \Big ( \bm{C} + \bm{J} \otimes j^*\bo \Big ) + \big (\bm{T}_x  + \bm{g} \times \bu + \bm{h} - \frac{1}{2} j^* R \bo \big ) \cdot \bo,
\end{align*}
where we have used $\div \bu = \nabla \bu : \I = 0$. Then, denoting by $B' : \R^d \to \R^d$ as the gradient of $B(\cdot)$, we compute the following identities 
\begin{align}
\nd{[\widehat{\rho\psi}]} &  = \frac{\sigma}{\eps} F'(\phi) \nd{\phi} + \sigma \eps A'(\nabla \phi) \cdot \nabla \nd \phi - \sigma \eps (A'(\nabla \phi) \otimes \nabla \phi) : \nabla \bu, \label{md:free}
\end{align}
and
\begin{align*} \nd{[\hat{\chi}]} & := \pd_t \hat{\chi} + \bu \cdot \nS \hat{\chi} \\
& = \hat{G}'(\phi) \nd{\phi} + \beta B'(\nS \phi) \cdot \nS \nd{\phi} - \beta (B'(\nS \phi) \otimes \nS \phi) : \nS \bu, \\
 & = \Big ( \hat{G}'(\phi) - \beta \divS B'(\nS \phi) \Big ) \nd \phi + \beta \divS (B'(\nS \phi) \nd \phi) \\
 & \quad - \beta(B'(\nS \phi) \otimes \nS \phi) : \nS \bu, 
\end{align*}
and 
\begin{align*} \hat{\chi} \divS \bu & =  \divS (\hat{\chi} \bu) - \nS \hat{\chi} \cdot \bu \\
 & = \divS( \hat{\chi} \bu - \beta (\bu \cdot \nS \phi) B'(\nS \phi)) - \bu \cdot \Big (\hat{G}'(\phi) - \beta \divS (B'(\nS \phi)) \Big ) \nS \phi \\
 & \quad + \beta  (B'(\nS \phi) \otimes \nS \phi) :\nS \bu.
\end{align*}
Hence, together with \eqref{bu:phi:equ} we can simplify \eqref{bu:diss:ineq} to
\begin{equation}\label{bu:diss:ineq:2}
\begin{aligned}
& \int_{V(t)}  - \nabla \bu : \Big ( \bm{T} + \bm{J} \otimes \bu + \sigma \eps A'(\nabla \phi) \otimes \nabla \phi \Big ) - \nabla \bo: \Big ( \bm{C} + \bm{J} \otimes j^* \bo \Big ) \, dx \\
& \quad  + \int_{V(t)} \Big ( \frac{\sigma}{\eps} F'(\phi) - \div \bm{\xi} - \mu \Big ) \nd{\phi} + \nabla \mu \cdot \bm{J}_\phi + \Big ( \sigma \eps A'(\nabla \phi) - \bm{\xi} \Big ) \cdot \nabla \nd{\phi} \, dx \\
& \quad + \int_{V(t)} \big ( \bm{k} - \frac{1}{2} R \bu \big ) \cdot \bu + \big (\bm{T}_x + \bm{g} \times \bu + \bm{h} - \frac{1}{2} j^* R \bo \big )\cdot \bo \, dx \\
& \quad + \int_{\Sigma(t)} \Big ( \hat{G}'(\phi) - \beta \divS (B'(\nS \phi)) + \bm{\xi} \cdot \bm{\nu} \Big ) \nd \phi \\
& \quad + \int_{\Sigma(t)} \divS \Big ( \hat{\chi} \bu + \beta \pd_t \phi B'(\nS \phi) - \bm{t} \Big )  \, dS  \\
& \quad + \int_{\Sigma(t)} \Big ( \bm{\nu} \cdot (\bm{T} - p \I + \bm{J} \otimes \bu) - \big ( \hat{G}'(\phi) - \beta \divS (B'(\nS \phi)) \big ) \nS \phi \Big ) \cdot \bu \, dS \\
& \quad + \int_{\Sigma(t)} [\bm{\nu} \cdot (\bm{C} + \bm{J} \otimes j^* \bo)] \cdot \bo - \mu \bm{J}_\phi \cdot \bm{\nu} - \frac{1}{2} ( |\bu|^2 + j^* |\bo|^2 ) \bm{J} \cdot \bnu \, dS \leq 0.
\end{aligned}
\end{equation}
From the arbitrariness of the test volume $V(t)$ we infer the local inequalities
\begin{equation}\label{bu:bulk:diss}
\begin{aligned}
- \mathcal{D} & := - \nabla \bu : \Big ( \bm{T} + \bm{J} \otimes \bu + \sigma \eps A'(\nabla \phi) \otimes \nabla \phi \Big ) - \nabla \bo : \Big ( \bm{C} + \bm{J} \otimes  j^*\bo \Big ) \\
& \quad + \Big ( \frac{\sigma}{\eps} F'(\phi) - \div \bm{\xi} - \mu \Big ) \nd{\phi} + \Big ( \sigma \eps A'(\nabla \phi) - \bm{\xi} \Big ) \cdot \nabla \nd{\phi} \\
& \quad  + \nabla \mu \cdot \bm{J}_\phi + \big ( \bm{k} - \frac{1}{2} R \bu \big ) \cdot \bu + \big ( \bm{T}_x + \bm{g} \times \bu + \bm{h} - \frac{1}{2} j^* R \bo\big ) \cdot \bo \leq 0,
\end{aligned}
\end{equation}
and 
\begin{equation}\label{bu:surf:diss}
\begin{aligned}
- \mathcal{D}_\Gamma & := \Big ( \hat{G}'(\phi) - \beta \divS (B'(\nS \phi)) + \bm{\xi}\cdot \bm{\nu} \Big ) \nd \phi \\
& \quad + \divS \Big ( \hat{\chi} \bu + \beta \pd_t \phi B'(\nS \phi) - \bm{t} \Big ) \\
& \quad +  \Big ( \bm{\nu} \cdot (\bm{T} - p\I + \bm{J} \otimes \bu) - \big ( \hat{G}'(\phi) - \beta \divS (B'(\nS \phi)) \big ) \nS \phi \Big ) \cdot \bu \\
& \quad + [\bm{\nu} \cdot (\bm{C} + \bm{J} \otimes j^* \bo)] \cdot \bo - \mu \bm{J}_\phi \cdot \bm{\nu}  - \frac{1}{2}(|\bu|^2 + j^* |\bo|^2) \bm{J} \cdot \bm{\nu} \leq 0.
\end{aligned}
\end{equation}
We define new tensors $\bm{S} = \bm{T} + \bm{J} \otimes \bu$ and $\bm{E} = \bm{C} + \bm{J} \otimes j^* \bo$, and introduce the decomposition $\bm{S} = \bm{S}_s + \bm{S}_a$ and $\bm{E} = \bm{E}_s + \bm{E}_a$ into a sum of their symmetric and skew parts, where for a tensor $\bm{A}$ we define
\[
\bm{A}_s = \frac{1}{2}(\bm{A} + \bm{A}^{\top}), \quad \bm{A}_a = \frac{1}{2}(\bm{A} - \bm{A}^{\top}).
\]
Furthermore, we introduce the notation
\[
\D \bv = (\nabla \bu)_s, \quad \W \bv = (\nabla \bu)_a, \quad 
\D \bo = (\nabla \bo)_s, \quad \W \bo = (\nabla \bo)_a,
\]
so that $\nabla \bv = \D \bv + \W \bv$ and $\nabla \bo = \D \bo + \W \bo$, and define a second order skew tensor $\bm{\Omega}$ associated to $\bo$ as
\begin{align}\label{skew:ten}
\bm{\Omega}_{jk} = \eps_{ijk} \omega_i.
\end{align}
In particular for any vector $\bm{v} \in \R^d$,
\[
\bm{\Omega} \bm{v} = - \bo \times \bm{v}.
\]
Then, the term $\bm{T}_x \cdot \bo$ can be expressed as
\begin{equation}\label{skew:rel}
\begin{aligned}
\bm{T}_x \cdot \bo & = (\bm{T}_a)_x \cdot \bo = \eps_{ijk} (\bm{T}_a)_{jk} \omega_i = \bm{\Omega} : \bm{T}_a  \\
&  = \bm{\Omega} : (\bm{S} - \bm{J} \otimes \bu) = \bm{S}_a : \bm{\Omega} - \bo \cdot (\bm{J} \times \bu),
\end{aligned}
\end{equation}
where we have used $\eps_{ijk} = - \eps_{ikj}$ and recalled that the antisymmetric vector of a symmetric second order tensor vanishes. From this \eqref{bu:bulk:diss} simplifies to
\begin{equation}\label{bu:bulk:diss:2}
\begin{aligned}
-\mathcal{D} & = - \D \bu : \Big ( \bm{S}_s + \sigma \eps A'(\nabla \phi) \otimes \nabla \phi \Big ) - \bm{S}_a : \Big (\W \bu - \bm{\Omega}\Big ) + \bo \cdot ((\bm{g} - \bm{J}) \times \bu) \\
& \quad - \D \bo : \bm{E}_s - \W \bo : \bm{E}_a + \Big ( \frac{\sigma}{\eps} F'(\phi) - \div \bm{\xi} - \mu \Big ) \nd{\phi} + \big (\bm{k} - \frac{1}{2} R \bu \big ) \cdot \bu  \\
& \quad + \big ( \bm{h} - \frac{1}{2}j^* R \bo \big ) \cdot \bo + \Big ( \sigma \eps A'(\nabla \phi) - \bm{\xi} \Big ) \cdot \nabla \nd{\phi} + \nabla \mu \cdot \bm{J}_\phi  \leq 0,
\end{aligned}
\end{equation}
where we have invoked the symmetric relation \eqref{A':sym} for the tensor $\sigma \eps A'(\nabla \phi) \otimes \nabla \phi$. Choosing constitutive relations
\begin{equation*}
\begin{alignedat}{3}
\bm{S}_s & = \widehat{\bm{S}}_s(\phi, \nabla \phi, \nabla \bu), \quad &&\bm{S}_a && = \widehat{\bm{S}}_a( \phi, \nabla \phi, \nabla \bu, \bm{\Omega}), \\
\bm{E}_s & = \widehat{\bm{E}}_s(\phi, \nabla \phi, \nabla \bo), \quad  && \bm{E}_a && = \widehat{\bm{E}}_a(\phi, \nabla \phi, \nabla \bo), \\
\bm{\xi} & = \widehat{\bm{\xi}}(\phi, \nabla \phi), \quad &&\bm{J}_\phi && = \widehat{\bm{J}_\phi}(\phi, \nabla \phi, \mu, \nabla \mu), \\
\bm{g} & = \widehat{\bm{g}}(\phi, \nabla \phi, \mu, \nabla \mu), \quad &&  \bm{k} && = \widehat{\bm{k}}(\phi, \nabla \phi, \mu, \nabla \mu , \bu), \quad \bm{h} = \widehat{\bm{h}}(\phi, \nabla \phi, \mu, \nabla \mu , \bo),
\end{alignedat}
\end{equation*}
then by a Coleman--Noll procedure, to avoid the violation of \eqref{bu:bulk:diss:2} for arbitrary $\bu$, $\bo$, $\nd \phi$ and $\nabla \nd \phi$, we consider the constitutive assumptions
\begin{equation}\label{bu:constit}
\begin{aligned}
\bm{\xi} & = \sigma \eps A'(\nabla \phi) = \sigma \eps a(\nabla \phi) \nabla \phi, \quad \mu = \frac{\sigma}{\eps} F'(\phi) - \sigma \eps \div (a(\nabla \phi) \nabla \phi), \\
\bm{J}_\phi & = - m(\phi) \nabla \mu, \quad \bm{g} = \bm{J} = -\rho'(\phi) m(\phi) \nabla \mu \\[1ex]
\bm{S}_s  & =  2\eta(\phi) \D \bu -  \sigma \eps a(\nabla \phi) \nabla \phi \otimes \nabla \phi , \quad \bm{S}_a = 2 \eta_r(\phi) (\W \bu - \bm{\Omega} ),  \\[1ex]
\bm{E}_s  & = c_0(\phi) (\div \bo) \I + 2c_d(\phi) \D \bo , \quad \bm{E}_a = 2 c_a(\phi) \W \bo, \\
\bm{k} & = \frac{1}{2} R \bu = -\frac{1}{2} [m(\phi) \nabla \mu \cdot \nabla \rho'(\phi) ] \bu, \\
\bm{h} & = \frac{1}{2}j^* R \bo = -\frac{j^*}{2} [m(\phi) \nabla \mu \cdot \nabla \rho'(\phi) ] \bo,
\end{aligned}
\end{equation}
with scalar viscosity functions $\eta$, $\eta_r$, $c_0$, $c_d$, $c_a$, and scalar mobility function $m$.  Non-positivity of $-\mathcal{D}$ can be guaranteed provided $m \geq 0$, $\eta_r \geq 0$, along with the classical conditions \cite[Chap.~1, Sec.~4.2]{Luka}
\begin{align}\label{Eringen:cond}
c_d \geq 0, \quad c_a + c_d \geq 0, \quad 3 c_0 + 2 c_d \geq 0, \quad |c_d - c_a| \leq c_d + c_a.
\end{align}
\begin{remark}
The restrictions in \eqref{Eringen:cond} for the coefficients arise from enforcing
\[
c_0 \pd_k \omega_k \pd_l \omega_l + c_d (\pd_k \omega_l + \pd_l \omega_k) \pd_k \omega_l + c_a( \pd_k \omega_l - \pd_l \omega_k) \pd_k \omega_l \geq 0
\]
for arbitrary values of $\nabla \bo$. In three spatial dimensions, the above can be expressed as a quadratic form over a nine-dimensional space, see \cite[p.~10]{Eringen} with $\alpha_{\nu} = c_0$, $\beta_{\nu} = c_d - c_a$, $\gamma_{\nu} = c_d + c_a$ for more details.
\end{remark}
Then, recalling \eqref{defn:Tx} we obtain
\begin{align*}
\bm{T}_x + \bm{g} \times \bu & = (\bm{S}_a - \bm{J} \otimes \bu)_x  + \bm{J} \times \bu = (\bm{S}_a)_x  \\
& = \eps_{ljk} \eta_r(\phi) (\pd_j v_k - \pd_k v_j - 2\eps_{mjk} \omega_m) =  \eta_r(\phi)(2  \curl \bu - 4 \bo).
\end{align*}
Subsequently, \eqref{bu:surf:diss} now simplifies to 
\begin{equation}\label{bu:surf:diss:simpli}
\begin{aligned}
- \mathcal{D}_\Gamma & = \Big ( \hat{G}'(\phi) - \beta \divS(B'(\nS \phi)) + \sigma \eps A'(\nabla \phi) \cdot \bnu \Big ) \nd \phi \\
& \quad + \divS \Big ( \hat{\chi} \bu + \beta \pd_t \phi B'(\nS \phi) - \bm{t} \Big ) \\
& \quad +  \Big ( \bm{\nu} \cdot (\bm{S}_s + \bm{S}_a - p\I ) - \big ( \hat{G}'(\phi) - \beta \divS(B'(\nS \phi)) \big ) \nS \phi \Big ) \cdot \bu \\
& \quad + [\bm{\nu} \cdot (\bm{E}_s + \bm{E}_a)] \cdot \bo + m(\phi) \mu \pdnu \mu \\
& \quad + \frac{\rho'(\phi)}{2}(|\bu|^2 + j^* |\bo|^2) m(\phi) \pdnu \mu \leq 0,
\end{aligned}
\end{equation}
where $\pdnu f = \nabla f \cdot \bm{\nu}$ is the normal derivative of $f$ on the boundary. To avoid violation of \eqref{bu:surf:diss:simpli} for arbitrary $\nd \phi$, $\bu$ and $\bo$, we consider the constitutive assumption
\[
\bm{t} = \hat{\chi} \bu + \beta \pd_t \phi B'(\nS \phi)
\]
and the boundary conditions
\begin{align}\label{phi:bc}
\nd \phi = - \gamma(\phi) \Big ( \hat{G}'(\phi) - \beta \divS (B'(\nS \phi)) + \sigma \eps A'(\nabla \phi) \cdot \bnu \Big ), \quad m(\phi) \pdnu \mu = 0,
\end{align}
with prefactor $\gamma(\phi) > 0$. For the velocity $\bu$ and micro-rotation $\bo$, to include the dynamics of a moving contact line in our model, we first consider the no-penetration conditions:
\begin{align}\label{no:pene}
\bu \cdot \bnu = 0, \quad \bo \cdot \bnu = 0.
\end{align}
Denoting by $\bu_\tau := \bu - (\bu \cdot \bnu) \bnu$ as the tangential component of $\bu$ on $\pd \Omega$, and likewise for $\bo_\tau$, recalling
\begin{align*}
\bm{S}_s + \bm{S}_a & = 2\eta(\phi) \D \bu + 2 \eta_r(\phi)( \W \bu - \bm{\Omega}) - \sigma \eps A'(\nabla \phi) \otimes \nabla \phi, \\
\bm{E}_s + \bm{E}_a & = c_0(\phi) (\div \bo) \mathbb{I} + 2 c_d(\phi) \D \bo + 2 c_a(\phi) \W \bo,
\end{align*}
then \eqref{bu:surf:diss:simpli} can be expressed as
\begin{align*}
- \mathcal{D}_\Gamma & = - \frac{1}{\gamma(\phi)} |\nd \phi|^2 - \big ( \hat{G}'(\phi) - \beta \divS(B'(\nS \phi)) + \sigma \eps A'(\nabla \phi) \cdot \bnu \big ) \nS \phi \cdot \bu_\tau \\
& \quad + \Big ( 2\bm{\nu} \cdot [\eta(\phi) \D \bu + \eta_r(\phi) \W \bu - \eta_r(\phi) \bm{\Omega}] \Big )_\tau \cdot \bu_\tau\\
& \quad + \Big (2\bm{\nu} \cdot (c_a(\phi) \D \bo + c_d(\phi) \W \bo) \Big )_\tau \cdot \bo_\tau, 
\end{align*} 
where we have used that $(\bm{\nu} \cdot \mathbb{I})_\tau = \bm{0}$. Hence, non-positivity of $-\mathcal{D}_\Gamma$ can be guaranteed for arbitrary $\bu$ and $\bo$ if we consider the boundary conditions
\begin{align}
c_u(\phi) \bu_\tau & =  - \Big ( 2 \bnu \cdot  [\eta(\phi) \D \bu + \eta_r(\phi) \W \bu - \eta_r(\phi) \bm{\Omega} ] \Big) _\tau - \frac{1}{\gamma(\phi)} \nd \phi \nS \phi \label{u:bc} \\
c_w(\phi) \bo_\tau & = - \Big ( 2 \bnu \cdot [c_a(\phi)  \D \bo + c_d(\phi) \W\bo]  \Big )_\tau, \label{w:bc}
\end{align}
with nonnegative prefactors $c_u(\phi)$ and $c_w(\phi)$. 

Hence, a diffuse interface model for two-phase micropolar flow based on the volume averaged velocity $\bu$, pseudo-momentum $\rho \bu$ and pseudo-kinetic energy $\frac{\rho}{2}|\bu|^2$ with moving contact line dynamics reads as
\begin{subequations}\label{bu:PF:model}
\begin{alignat}{2}
\div \bu & = 0, \\[1ex]
\nd{\phi} & = \div (m(\phi) \nabla \mu), \\[1ex]
 \mu & = \frac{\sigma}{\eps} F'(\phi) - \sigma \eps \div (a(\nabla \phi) \nabla \phi), \\[1ex] 
\nd \rho & = \div (\rho'(\phi) m(\phi) \nabla \mu)  + R, \quad R = - m(\phi) \nabla \mu \cdot \nabla \rho'(\phi), \label{PF:rho} \\[1ex]
 \rho \nd{u} & = - \nabla p - \div \Big ( \sigma \eps a(\nabla \phi) \nabla \phi \otimes \nabla \phi \Big ) + \div (2 \eta(\phi) \D \bu + 2 \eta_r(\phi) \W \bu) \label{PF:lin:mom} \\[1ex] 
\notag & \quad + 2 \curl(\eta_r(\phi) \bo) + \rho'(\phi) \big ( m(\phi) \nabla \mu \cdot \nabla \big ) \bu - \frac{1}{2} R \bu , \\[1ex]
 \rho j^* \nd{\bo} & = \div (c_0(\phi) (\div \bo) \I + 2c_d(\phi) \D \bo +2 c_a(\phi) \W \bo)\label{PF:ang:mom} \\[1ex]
\notag  & \quad  + 2 \eta_r(\phi)(\curl \bv - 2 \bo) + j^* \rho'(\phi) \big ( m(\phi) \nabla \mu \cdot \nabla \big ) \bo - \frac{j^*}{2} R \bo, 
\end{alignat}
\end{subequations}
with boundary conditions
\begin{subequations}\label{bu:PF:GNBC}
\begin{alignat}{2}
& m(\phi) \pdnu \mu = 0, \quad \bu \cdot \bnu = 0, \quad \bo \cdot \bnu = 0  \\[1ex]
& \nd{\phi} = - \gamma(\phi) \big (\hat{G}'(\phi) - \beta \divS(B'(\nS \phi)) + \sigma \eps a(\nabla \phi) \pdnu \phi \big ), \\[1ex]
& c_u(\phi) \bu_\tau +  \Big ( 2 \bnu \cdot [\eta(\phi) \D \bu + \eta_r(\phi) \W \bu - \eta_r(\phi) \bm{\Omega}]  \Big )_\tau = -\frac{1}{\gamma(\phi)} \nd \phi \nS \phi, \\[1ex]
& c_w(\phi) \bo_\tau + \Big ( 2 \bnu \cdot [c_d(\phi) \D \bo + c_a(\phi) \W \bo]\Big )_\tau  = \bm{0}.
\end{alignat}
\end{subequations}

\begin{remark}
If $\phi \in [-1,1]$ holds, then $\rho'(\phi)$ is simply the constant $\frac{\overline{\rho}_1 - \overline{\rho}_2}{2}$, which leads to $R = 0$ in \eqref{PF:rho}, \eqref{PF:lin:mom} and \eqref{PF:ang:mom}, i.e.,
\begin{subequations}\label{PF:phi:alt}
\begin{alignat}{2}
\nd \rho & = \tfrac{\overline{\rho}_1 - \overline{\rho}_2}{2}  \div (m(\phi) \nabla \mu), \label{PF:rho:alt} \\[1ex]
 \rho \nd{u} & = - \nabla p - \div \Big ( \sigma \eps a(\nabla \phi) \nabla \phi \otimes \nabla \phi \Big ) + \div (2 \eta(\phi) \D \bu + 2 \eta_r(\phi) \W \bu) \label{PF:lin:mom:alt} \\[1ex] 
\notag & \quad + 2 \curl(\eta_r(\phi) \bo) + \tfrac{\overline{\rho}_1 - \overline{\rho}_2}{2}  \big ( m(\phi) \nabla \mu \cdot \nabla \big ) \bu , \\[1ex]
 \rho j^* \nd{\bo} & = \div (c_0 (\div \bo) \I + 2c_d \D \bo +2 c_a \W \bo) + 2 \eta_r(\phi)(\curl \bv - 2 \bo)  \label{PF:ang:mom:alt} \\[1ex]
\notag  & \quad + j^* \tfrac{\overline{\rho}_1 - \overline{\rho}_2}{2} \big ( m(\phi) \nabla \mu \cdot \nabla \big ) \bo.
\end{alignat}
\end{subequations}
In particular, the model equations \eqref{bu:PF:model} remains valid even if the order parameter $\phi$ lies outside the physically relevant interval $[-1,1]$. These forms for the linear and angular momentum balances will play a crucial role in the analysis performed in Section \ref{sec:analysis}.
\end{remark}

\begin{remark}
Assuming $\phi \in [-1,1]$ holds, so that $R = 0$, and in the absence of micro-rotation effects, i.e., $\bo = \bm{0}$, $\eta_r = 0$, as well as isotropic gradient energy \eqref{isotropic}, i.e., $a(\nabla \phi) = 1$, the system \eqref{bu:PF:model} with \eqref{PF:phi:alt} reduces to the model of Abels, Garcke and Gr\"un \cite{AGG}. If, in addition, $\beta = 0$, then due to the symmetry of $\D \bv$ the equations \eqref{phi:bc} and \eqref{u:bc} simplify to
\[
\nd \phi = - \gamma \big ( \hat{G}'(\phi) + \sigma \eps \pdnu \phi \big ), \quad \delta \bv_\tau = - \big ( 2 \eta(\phi) [\D \bv] \bnu \big )_\tau + \big ( \sigma \eps \pdnu \phi + \hat{G}'(\phi) \big ) \nS \phi.
\] 
Together with the no-penetration condition $\bu \cdot \bnu = 0$ and the no-flux condition $m(\phi) \pdnu \mu = 0$ we recover the generalized Navier boundary condition for moving contact lines, see e.g.~\cite[(4.4)-(4.5)]{Qian}.
\end{remark}

\begin{remark}
In our calculation we made the implicit assumption that the external boundary $\pd \Omega$ is stationary. We refer the reader to \cite{Aero} where the authors considered a more general class of boundary conditions for the micro-rotations where the external boundary may be non-stationary and moves with a (prescribed) velocity field $\bu
_{\pd \Omega}$. For instance, boundary conditions of the following forms are proposed:
\[
\bm{C} \bnu = \alpha (\bo - \tfrac{1}{2} \curl \bu_{\pd \Omega}), \quad \text{ or } \quad \alpha (\bo - \tfrac{1}{2} \curl \bu_{\pd\Omega}) = \curl \bu - 2 \bo
\]
with a nonnegative constant $\alpha$. The formal limit $\alpha \to \infty$ in both examples yields $\bo = \tfrac{1}{2} \curl \bu_{\pd \Omega}$, which states that the micro-rotation at the boundary matches the vorticity induced by the rotating boundary. On the other hand, in the formal limit $\alpha \to 0$, we obtain $\bm{C} \bnu = \bm{0}$ in the first example and $\bo = \frac{1}{2} \curl \bu$ in the second example, see e.g.~\cite{Kirwan} for a further discussion. The latter states that the skew part of the stress tensor vanishes on the boundary, see \cite[Chap.~1, Sec.~5]{Luka}.
\end{remark}

\begin{remark}[Reformulation of the linear pseudo-momentum balance]\label{rem:mom}
Using the identities
\begin{align*}
\sigma \eps \div (A'(\nabla \phi) \otimes \nabla \phi) = \nabla \Big ( \frac{\sigma}{\eps} F(\phi) + \sigma \eps A(\nabla \phi) \Big ) - \mu \nabla \phi,
\end{align*}
the balance of linear pseudo-momentum \eqref{PF:lin:mom} can be expressed in a simpler form:
\begin{equation}\label{mom:alt}
\begin{aligned}
 \rho \nd{u} & = - \nabla \tilde{p} + \mu \nabla \phi + \div (2 \eta(\phi) \D \bu + 2 \eta_r(\phi) \W \bu) \\
  & \quad + 2 \curl(\eta_r(\phi) \bo) + \rho'(\phi) \big ( m(\phi) \nabla \mu \cdot \nabla \big ) \bu - \frac{1}{2}R \bu,
\end{aligned}
\end{equation}
with a rescaled pressure $\tilde{p} = p + \frac{\sigma}{\eps} F(\phi) + \sigma \eps A(\nabla \phi)$. Alternatively, by adding \eqref{mom:alt} with \eqref{PF:rho} multiplied by $\bu$, we have instead of \eqref{pseudo:mom:bu}:
\begin{equation}\label{mom:alt:2}
\begin{aligned}
& \pd_t (\rho \bu) + \div (\rho \bu \otimes \bu) \\
& \quad = - \nabla \tilde{p} + \mu \nabla \phi +  \div (2 \eta(\phi) \D \bu + 2 \eta_r(\phi) \W \bu) \\
& \qquad + 2 \curl(\eta_r(\phi) \bo) + \div (\rho'(\phi) m(\phi) \nabla \mu \otimes \bu )+ \frac{1}{2} R\bu.
\end{aligned}
\end{equation}
In a similar fashion, adding \eqref{PF:ang:mom} with \eqref{PF:rho} multiplied by $j^*\bo$, we have
\begin{equation}\label{ang:mom:alt:2}
\begin{aligned}
& \pd_t (\rho j^* \bo) + \div (\rho \bu \otimes j^* \bo ) \\
&  \quad = \div (c_0(\phi) (\div \bo) \I + 2c_d(\phi) \D \bo +2 c_a(\phi) \W \bo)   \\
& \qquad  + 2 \eta_r(\phi)(\curl \bu - 2 \bo) + \div (\rho'(\phi) m(\phi) \nabla \mu \otimes \bo) +  \frac{j^*}{2} R \bo.
\end{aligned}
\end{equation}
\end{remark}

\subsection{Model with nonlocal Ginzburg--Landau energy}\label{sec:nonlocal}
Instead of \eqref{GL:eg} we consider the nonlocal energy \eqref{nlGL} with a suitable symmetric convolution kernel $K$.  Then, 
\begin{align*}
\nd (\widehat{\rho \psi}) & = \frac{\sigma}{\eps} F'(\phi)  \nd \phi + \frac{\delta}{\delta \phi} \big ( \frac{\sigma \eps}{4} \int_\Omega K(x-y)|\phi(x) - \phi(y)|^2 \, dy \big ) \nd \phi \\
& = \frac{\sigma}{\eps} F'(\phi) \nd \phi + \sigma \eps \big ( (K  \star 1) \phi - (K \star \phi) \big ) \nd \phi,
\end{align*}
where $\star$ denotes the convolution operation, i.e.,
\[
(K \star \zeta)(x) = \int_\Omega K(x-z) \zeta(z) \, dz.
\]
In comparison to \eqref{md:free} we note that the symmetric tensor $\sigma \eps (A'(\nabla \phi) \otimes \nabla \phi)$ related to the capillary stress does not appear. Then, the analogue of \eqref{bu:bulk:diss} reads as
\begin{equation}\label{bu:bulk:diss:nl}
\begin{aligned}
- \mathcal{D} & = - \nabla \bu : \Big ( \bm{T} + \bm{J} \otimes \bu \Big ) - \nabla \bo : \Big ( \bm{C} + \bm{J} \otimes  j^*\bo \Big ) \\
& \quad + \Big ( \frac{\sigma}{\eps} F'(\phi)  + \sigma \eps \big ( (K \star 1) \phi - (K \star \phi) \big ) - \mu \Big ) \nd{\phi}  -\bm{\xi} \cdot \nabla \nd{\phi} \\
& \quad  + \nabla \mu \cdot \bm{J}_\phi + \big ( \bm{k} - \frac{1}{2} R\bu \big ) \cdot \bu + \big ( \bm{T}_x + \bm{g} \times \bu + \bm{h} - \frac{1}{2} j^* R \bo \big ) \cdot \bo \\
& =  - \D \bu : \bm{S}_s - \bm{S}_a : \Big (\W \bu - \bm{\Omega}\Big )  - \D \bo : \bm{E}_s - \W \bo : \bm{E}_a  + \bo \cdot ((\bm{g} - \bm{J}) \times \bu) \\
& \quad + \Big ( \frac{\sigma}{\eps} F'(\phi) + \sigma \eps \big ( (K \star 1) \phi - (K \star \phi) \big ) - \mu \Big ) \nd{\phi} - \bm{\xi} \cdot \nabla \nd{\phi} + \nabla \mu \cdot \bm{J}_\phi  \\
& \quad + \big ( \bm{k} - \frac{1}{2} R\bu \big ) \cdot \bu + \big (\bm{h} - \frac{1}{2} j^* R \bo \big ) \cdot \bo \\
& \leq 0.
\end{aligned}
\end{equation}
after employing \eqref{skew:ten} and \eqref{skew:rel}. By a Coleman--Noll procedure, we consider the constitutive assumptions
\begin{align}\label{bu:nl:constit}
\bm{\xi} = \bm{0}, \quad \mu = \frac{\sigma}{\eps} F'(\phi) + \sigma \eps \big ( (K \star 1) \phi - (K \star \phi) \big ), \quad \bm{S}_s  =  2\eta(\phi) \D \bu,
\end{align}
with $\bm{J}_\phi$, $\bm{g}$, $\bm{S}_a$, $\bm{E}_s$, $\bm{E}_a$, $\bm{h}$ and $\bm{k}$ chosen as in \eqref{bu:constit}. Keeping $\chi$ unchanged as in \eqref{chi:defn}, we find that the analogue of \eqref{bu:surf:diss} now reads as
\begin{equation}\label{bu:surf:diss:nl}
\begin{aligned}
- \mathcal{D}_\Gamma & := \Big ( \hat{G}'(\phi) - \beta \divS (B'(\nS \phi))  \Big ) \nd \phi + \divS \Big ( \hat{\chi} \bu + \beta \pd_t \phi B'(\nS \phi) - \bm{t} \Big ) \\
& \quad +  \Big ( \bm{\nu} \cdot (\bm{T} - p\I + \bm{J} \otimes \bu) - \big ( \hat{G}'(\phi) - \beta \divS (B'(\nS \phi)) \big ) \nS \phi \Big ) \cdot \bu \\
& \quad + [\bm{\nu} \cdot (\bm{C} + \bm{J} \otimes j^* \bo)] \cdot \bo - \mu \bm{J}_\phi \cdot \bm{\nu} - \frac{1}{2}(|\bu|^2 + j^* |\bo|^2) \bm{J} \cdot \bm{\nu} \leq 0.
\end{aligned}
\end{equation}
This leads to the boundary conditions
\begin{align}\label{phi:bc:nl}
\nd \phi = - \gamma(\phi) \Big ( \hat{G}'(\phi) - \beta \divS (B'(\nS \phi))\Big ), \quad m(\phi) \pdnu \mu = 0,
\end{align}
where we note the minor difference with \eqref{phi:bc}, along with \eqref{no:pene}, \eqref{u:bc} and \eqref{w:bc} for the inclusion of moving contact line dynamics. Hence, a nonlocal analogue of \eqref{bu:PF:model}-\eqref{bu:PF:GNBC} reads as
\begin{subequations}\label{bu:PF:model:nl}
\begin{alignat}{2}
\div \bu & = 0, \\[1ex]
\nd{\phi} & = \div (m(\phi) \nabla \mu), \\[1ex]
 \mu & = \frac{\sigma}{\eps} F'(\phi) + \sigma \eps \big ( (K \star 1) \phi - (K \star \phi) \big ), \\[1ex] 
 \nd \rho & = \div (\rho'(\phi) m(\phi) \nabla \mu)  + R, \quad R = - m(\phi) \nabla \mu \cdot \nabla \rho'(\phi),  \label{PF:rho:equ:R} \\[1ex]
 \rho \nd{u} &= - \nabla p + \div (2 \eta(\phi) \D \bu + 2 \eta_r(\phi) \W \bu) \label{lin:mom:nl} \\[1ex] 
\notag & \quad + 2 \curl(\eta_r(\phi) \bo) + \rho'(\phi) \big ( m(\phi) \nabla \mu \cdot \nabla \big ) \bu - \frac{1}{2} R \bu, \\[1ex]
\rho j^* \nd{\bo} & = \div (c_0 (\div \bo) \I + 2c_d \D \bo +2 c_a \W \bo) + 2 \eta_r(\phi)(\curl \bv - 2 \bo)  \\[1ex]
 & \quad + j^* \rho'(\phi) \big ( m(\phi) \nabla \mu \cdot \nabla \big ) \bo - \frac{j^*}{2} R \bo, 
\end{alignat}
\end{subequations}
with boundary conditions
\begin{subequations}\label{bu:PF:GNBC:nl}
\begin{alignat}{2}
& m(\phi) \pdnu \mu = 0, \quad \bu \cdot \bnu = 0, \quad \bo \cdot \bnu = 0  \\[1ex]
& \nd{\phi} = - \gamma(\phi) \big (\hat{G}'(\phi) - \beta \divS(B'(\nS \phi)) \big ), \\[1ex]
& c_u(\phi) \bu_\tau +  \Big ( 2 \bnu \cdot [\eta(\phi) \D \bu + \eta_r(\phi) \W \bu - \eta_r(\phi) \bm{\Omega}]  \Big )_\tau = -\frac{1}{\gamma(\phi)} \nd \phi \nS \phi, \\[1ex]
& c_w (\phi) \bo_\tau + \Big ( 2 \bnu \cdot [c_d(\phi) \D \bo + c_a(\phi) \W \bo] \Big )_\tau  = \bm{0}.
\end{alignat}
\end{subequations}

\begin{remark}[Reformulation of the linear pseudo-momentum balance]\label{rem:mom:nl}
Using the identity
\begin{align*}
\mu \nabla \phi & = \frac{\sigma}{\eps} F'(\phi) \nabla \phi + \sigma \eps \Big ( (K \star 1) \phi - (K \star \phi) \Big ) \nabla \phi \\
& = \nabla \Big ( \frac{\sigma}{\eps} F(\phi) + \frac{\sigma \eps}{4} \int_\Omega K(x-y) (\phi(x) - \phi(y))^2 \, dy \Big ),
\end{align*}
we can introduce a rescaled pressure $\hat{p} := p + \frac{\sigma}{\eps} F(\phi) + \frac{\sigma \eps}{4} \int_\Omega K(x-y) (\phi(x) - \phi(y))^2 \, dy$ to express
the balance of linear pseudo-momentum \eqref{lin:mom:nl} in a form analogous to \eqref{mom:alt}:
\begin{equation}\label{mom:alt:nl}
\begin{aligned}
 \rho \nd{u} & = - \nabla \hat{p} + \mu \nabla \phi + \div (2 \eta(\phi) \D \bu + 2 \eta_r(\phi) \W \bu) \\
  & \quad + 2 \curl(\eta_r(\phi) \bo) + \rho'(\phi) \big ( m(\phi) \nabla \mu \cdot \nabla \big ) \bu - \frac{1}{2} R \bu.
\end{aligned}
\end{equation}
This form of the balance of linear pseudo-momentum incorporates the capillary stresses in the term $\mu \nabla \phi$.
\end{remark}

\section{Existence of weak solutions}\label{sec:analysis}
In this section we establish the existence of weak solutions to the new phase field model \eqref{bu:PF:model}-\eqref{bu:PF:GNBC}. Our approach follows the work of \cite{GGW}, which treats the case without micro-rotations.  Without loss of generality and to simplify the presentation, let us set constants and parameters $\sigma$, $\eps$, $j^*$, $\beta$, $\gamma$, $c_u$ and $c_w$ to be $1$, letting $A(\bm{p})=  \frac{1}{2} |\bm{p}|^2$ and $B(\bm{p}) = \frac{1}{2} |\bm{p}|^2$ to be the isotropic gradient energies, so that $A'(\nabla \phi) = \nabla \phi$ and $B'(\nS \phi) = \nS \phi$.  Furthermore, we consider a specific decomposition of the interfacial free energy density $\hat{G}$ into
\[
\hat{G}(s) = \frac{\zeta}{2}|s|^2 + G(s),
\]
with constant $\zeta >0$. Then, the resulting simplified model that satisfies the physically relevant constraint $\phi \in [-1,1]$ reads in strong formulation as
\begin{subequations}\label{ana:bulk}
\begin{alignat}{2}
\div \bu & = 0, \\[1ex]
\nd{\phi} & = \div (m(\phi) \nabla \mu), \label{simp:phi} \\[1ex]
 \mu & = F'(\phi) - \Delta \phi, \label{simp:mu} \\[1ex] 
\label{simp:bu} \rho \nd{\bu} & = - \nabla p + \mu \nabla \phi + \div (2 \eta(\phi) \D \bu + 2 \eta_r(\phi) \W \bu) \\[1ex] 
\notag & \quad + 2 \curl(\eta_r(\phi) \bo) + \tfrac{\ov{\rho}_1 - \ov{\rho}_2}{2} \big ( m(\phi) \nabla \mu \cdot \nabla \big ) \bu, \\[1ex]
\label{simp:bo} \rho \nd{\bo} & = \div (c_0(\phi) (\div \bo) \I + 2c_d(\phi) \D \bo +2 c_a(\phi) \W \bo)   \\[1ex]
 & \notag \quad + 2 \eta_r(\phi)(\curl \bu - 2 \bo) + \tfrac{\ov{\rho}_1 - \ov{\rho}_2}{2} \big ( m(\phi) \nabla \mu \cdot \nabla \big ) \bo, 
\end{alignat}
\end{subequations}
with boundary conditions
\begin{subequations}\label{ana:bc}
\begin{alignat}{2}
& m(\phi) \pdnu \mu = 0, \quad \bu \cdot \bnu = 0, \quad \bo \cdot \bnu = 0  \\[1ex]
& \nd{\phi} = - \big ( \zeta \phi + G'(\phi) -  \Delta_\Gamma \phi + \pdnu \phi \big ) =: - \mathcal{L}(\phi), \\[1ex]
& \bu_\tau +  \Big ( 2 \bnu \cdot [\eta(\phi) \D \bu + \eta_r(\phi) \W \bu - \eta_r(\phi) \bm{\Omega}]  \Big )_\tau = -\nd \phi \nS \phi, \\[1ex]
& \bo_\tau + \Big ( 2 \bnu \cdot [c_d(\phi) \D \bo + c_a(\phi) \W \bo] \Big )_\tau  = \bm{0},
\end{alignat}
\end{subequations}
where with a slight abuse of notation we use the symbol $p$ for the rescaled pressure in \eqref{mom:alt}. Furthermore, to establish the existence of a suitable notion of weak solutions for \eqref{ana:bulk}-\eqref{ana:bc}, in the following we perform a two level approximation with parameters which, with an abuse of notation, involve symbols such as $\sigma$ and $\eps$ that bear no relation to the physical parameters of surface tension and interfacial thickness in \eqref{GL}.

\subsection{Preliminaries}
Let $X$ be a real Banach space and $X^*$ be its topological dual, so that $\inn{f}{g}_X$ for $f \in X^*$ and $g \in X$ is the corresponding duality pairing. We write $X \subset Y$ and $X \Subset Y$ to denote the continuous and compact embedding of $X$ into $Y$. 

For $1 \leq p \leq \infty$, the Bochner space $L^p(0,T;X)$ denotes the set of all strongly measurable $p$-integrable functions (if $p < \infty$) or essentially bounded functions (if $p = \infty$) on the time interval $[0,T]$ with values in the Banach space $X$. The space $W^{1,p}(0,T;X)$ denotes all $u \in L^p(0,T;X)$ such that its vector-valued distributional derivative $\frac{d u}{dt} \in L^p(0,T;X)$. Furthermore, $C^0([0,T];X)$ denotes the Banach space of all bounded and continuous functions $u:[0,T] \to X$ equipped with the supremum norm, while $C_w^0([0,T];X)$ is the space of bounded and weakly continuous functions $u:[0,T] \to X$, i.e., $\inn{f}{u} : [0,T] \to \R$ is continuous for all $f \in X^*$. Furthermore, the notation $C^\infty_0(0,T;X)$ denotes the vector space of all smooth functions $u:(0,T) \to X$ with $\text{supp}(u) \Subset (0,T)$.

For a bounded domain $\Omega \subset \R^d$, $d \in \{2,3\}$ with smooth boundary $\Gamma := \pd \Omega$, we denote by $L^p(\Omega)$ and $W^{k,p}(\Omega)$, for $1 \leq p \leq \infty$ and $k \geq 0$ to be the Lebesgue and Sobolev spaces over $\Omega$. Similarly, $L^p(\Gamma)$ and $W^{k,p}(\Gamma)$ are the Lebesgue and Sobolev spaces over $\Gamma$.  When $k \in \mathbb{N}$ and $ p = 2$, we use the short hand $H^k(\Omega) = W^{k,2}(\Omega)$ and $H^k(\Gamma) = W^{k,2}(\Gamma)$.  We use $(\cdot,\cdot)_\Omega$ to denote the $L^2(\Omega)$ inner product with associated norm $\| \cdot \|$, and $(\cdot,\cdot)_\Gamma$ denotes the $L^2(\Gamma)$ inner product with associated norm $\| \cdot \|_\Gamma$.  The norms of $W^{k,p}(\Omega)$ and $W^{k,p}(\Gamma)$ will be denoted by $\| \cdot \|_{W^{k,p}}$ and $\| \cdot \|_{W^{k,p}_\Gamma}$, respectively.  In particular we will use the space
\[
H^2_n(\Omega) := \{ f \in H^2(\Omega) \, : \, \pdnu f = 0 \text{ on } \Gamma \}.
\]
We recall that for $p \in (1,\infty)$, $k > \frac{1}{p}$ with $k \in \mathbb{N}$ the trace operator 
\[
\mathrm{tr} : W^{k,p}(\Omega) \to W^{k-\frac{1}{p},p}(\Gamma), \quad \mathrm{tr}(u) = u \vert_\Gamma,
\]
is a continuous operator, where for the case $p \neq 2$ additionally we require $k - \frac{1}{p} \notin \mathbb{N}$. It is convenient to treat the boundary trace of $\phi$ as a separate variable $\psi$, which leads to the consideration of the function space
\begin{align*}
\VV^k := \Big \{ (\phi, \psi) \in H^k(\Omega) \times H^{k - \frac{1}{2}}(\Gamma) \, : \, \psi = \mathrm{tr}(\phi) \in H^k(\Gamma) \Big \}, 
\end{align*}
for $k \in \mathbb{N}$, equipped with the inner product and norm
\[
\inn{(\phi, \psi)}{(\hat{\phi}, \hat{\psi})}_{\VV^k} := (\phi, \hat{\phi})_{H^k} + (\psi, \hat{\psi})_{H^k_\Gamma}, \quad \| (\phi, \psi) \|_{\VV^k} = \sqrt{\| \phi \|_{H^k}^2 + \| \psi \|_{H^k(\Gamma)}^2}.
\]
For convenience we use the notation $Q := \Omega \times (0,T)$ and $\Sigma := \Gamma \times (0,T)$, as well as $(\cdot,\cdot)_Q$ and $(\cdot, \cdot)_\Sigma$ to denote the $L^2$-inner product on $Q$ and $\Sigma$, respectively.

For the velocity field and the micro-rotation field, we consider the following functional framework. We denote by 
\begin{align*}
\C^\infty_{*}(\overline{\Omega}) & := \big \{ \bm{f} \in C^\infty(\overline{\Omega}; \R^d) \, : \, \bm{f} \cdot \bm{\nu} = 0 \text{ on } \Gamma \big \}, \\
\C^{\infty}_{\div}(\overline{\Omega}) & := \big \{ \bm{f} \in \C^\infty_*(\overline{\Omega}) \, : \,  \div \bm{f} = 0 \text{ in } \Omega \big \},
\end{align*}
and set for any $s \geq 0$ and $p \in [1,\infty]$,
\[
\WW^{s,p} := \overline{\C^\infty_*(\overline{\Omega})}^{W^{s,p}(\Omega)}, \quad \HH^s := \WW^{s,2}, \quad \WW^{s,p}_{\div} := \overline{\C^\infty_{\div}(\overline{\Omega})}^{W^{s,p}(\Omega)}, \quad \HH^s_{\div} := \WW^{s,2}_{\div}.
\]
We use the notation $\HH = \HH^0$ and $\HH_{\div} = \HH_{\div}^0$ for the case $s = 0$ where we identify $W^{0,2}(\Omega) = L^2(\Omega)$. 
\subsection{Assumptions and main result}
We make the following set of assumptions (see also \cite{GGM,GGW}):
\begin{assump}\label{ass:main}
\
\begin{enumerate}[label=$(\mathrm{A \arabic*})$, ref = $\mathrm{A \arabic*}$]
\item \label{ass:dom} $\Omega \subset \R^d$, $d \in \{2, 3\}$, is a bounded domain with smooth boundary $\Gamma$. 
\item \label{ass:coeff} We assume that $m, \eta_0, \eta_r, c_0, c_d, c_a \in C^{1,1}_{\mathrm{loc}}(\R)$ and there exist positive constants $k_0 < k_1$ such that 
\[
0 < k_0 \leq m(s), \eta_0(s), \eta_r(s), c_0(s), c_d(s), c_a(s) \leq k_1.
\]
\item \label{ass:G} There exist constants $C_G > 0$ and $c_G \geq 0$ such that the free energy density $G$ satisfies the properties: $G \in C^2(\R)$ can be decomposed into a sum $G = G_0 + G_1$ where $G_0 \in C^2(\R)$ is convex and $G_1 \in C^2(\R)$ is concave, along with 
\[
G(s) \geq - c_G, \quad G''(s) \geq - c_G, \quad |G''(s)| \leq C_G(1 + |s|^p) \quad \forall s \in \R,
\]
for arbitrary but fixed $p \in [1,\infty)$.
\item \label{ass:rho} The mass density function $\rho$ is given as
\[
\rho(s) = \frac{\overline{\rho}_1 - \overline{\rho}_2}{2} s + \frac{\overline{\rho}_1 + \overline{\rho}_2}{2} \quad \forall s \in [-1,1],
\] 
with fixed positive constant mass densities $\overline{\rho}_1$ and $\overline{\rho}_2$.
\item \label{ass:F} The free energy density $F$ satisfies the properties:
\begin{itemize}
\item $F(s) = F_0(s) + F_1(s)$ where $F_0 \in C^0([-1,1]) \cap C^2((-1,1))$, $F_1(s) \in C^2(\R)$ with $F_0(0) = 0$, $F_0'(0) = 0$, $F_0''(s) \geq 0$ for $s \in (-1,1)$ and
\[
\lim_{s \to \pm 1} F_0'(s) = \pm \infty, \quad \lim_{s \to \pm 1} F_0''(s) = +\infty, \quad |F_1''(r)| \leq C_F \quad \forall r \in \R,
\]
so that $F''(s) \geq - C_F$ for all $s \in (-1,1)$ for some constant $C_F > 0$.
\item There exist constants $M \in (0,1)$, $\delta > 0$, $C_{\delta, M} > 0$ and $C_M > 0$ such that for all $s \in (-1,M] \cup [M,1)$,
\[
F_0''(s) - \delta (F_0'(s))^2 \geq - C_{\delta, M}, \quad F_0'(s) (\zeta s + G'(s)) \geq - C_M.
\]
\end{itemize}
\item \label{ass:ini} $\bu_0 \in \HH_{\div}$, $\bo_0 \in \HH$, $(\phi_0,\mathrm{tr}(\phi_0)) \in \VV^1$ with $|\phi_0| \leq 1$ a.e.~in $\Omega$, $F(\phi_0) \in L^1(\Omega)$, $F(\mathrm{tr}(\phi_0)) \in L^1(\Gamma)$ and $\frac{1}{|\Omega|} \int_\Omega \phi_0 \, dx \in (-1,1)$.
\end{enumerate}
\end{assump}

We remark that \eqref{ass:G} and \eqref{ass:F} are fulfilled with the classical logarithmic potential \eqref{log:pot} for $F$:
\begin{align}\label{log}
F_0(s) = (1+s) \ln(1+s) + (1-s) \ln (1-s), \quad F_1(s) = \frac{-c_F}{2} s^2,
\end{align}
and for any function $\hat{G}(s) = \frac{\zeta}{2}|s|^2 + G(s)$ that fulfills the sign condition $\pm \hat{G}'(\pm 1) > 0$, such as the second example in \eqref{hatG:eg} with $\gamma_1 > \gamma_2$. We now introduce the notion of weak solutions, which treats the trace of the order parameter $\phi \vert_\Gamma$ as a new unknown variable $\psi$ defined on the boundary:
\begin{defn}\label{defn:weaksoln}
A quintuple of functions $(\bu, \bo, \mu, (\phi, \psi))$ is a weak solution to \eqref{ana:bulk}-\eqref{ana:bc} on the time interval $[0,T]$ if the following properties are satisfied:
\begin{itemize}
\item Regularity
\begin{equation*}
\begin{aligned}
\bu & \in C_w([0,T]; \HH_{\div}) \cap L^2(0,T;\HH^1), \\
\bo & \in C_w([0,T]; \HH) \cap L^2(0,T;\HH^1), \\
(\phi, \psi) & \in C_w([0,T];\VV^1) \cap L^2(0,T;\VV^2), \\
\mu & \in L^2(0,T;H^1(\Omega)),
\end{aligned}
\end{equation*}
along with $|\phi| < 1$ a.e.~in $Q$ and $|\psi| \leq 1$ a.e.~on $\Sigma$.
\item Equations
\begin{subequations}
\begin{alignat}{2}
\label{weak:ns} 0 &=  -(\rho(\phi) \bu, \pd_t \bm{v})_Q + (\div(\rho \bu \otimes \bu), \bm{v})_Q  \\
\notag & \quad + (2 \eta(\phi) \D \bu, \D \bm{v})_Q + (2 \eta_r(\phi) \W \bu, \W \bm{v})_Q - (2\eta_r(\phi) \bo, \curl \bm{v})_Q  \\
\notag & \quad  + (\bu_\tau, \bm{v}_\tau)_\Sigma - (\mathcal{L}(\psi) \nS \psi, \bm{v}_\tau)_{\Sigma} - (( \bm{J} \otimes \bu), \nabla \bm{v})_Q - (\mu \nabla \phi, \bm{v})_Q \\[1ex]
\label{weak:w} 0 & = -(\rho(\phi) \bo, \pd_t \bm{z})_Q + (\div (\rho \bu \otimes \bo), \bm{z})_Q + (\bo_\tau, \bm{z}_\tau)_{\Sigma}  \\
\notag & \quad + (c_0(\phi) \div \bo, \div \bm{z})_Q  + (2 c_d(\phi) \D \bo, \D \bm{z})_Q + (2 c_a(\phi) \W \bo, \W \bm{z})_Q\\
\notag & \quad  - (2 \eta_r(\phi) (\curl \bu - 2 \bo), \bm{z})_Q - (( \bm{J} \otimes \bo ), \nabla \bm{z})_Q, \\[1ex]
\label{weak:phi} 0 & = -(\phi, \pd_t \xi)_Q + (\bu \cdot \nabla \phi, \xi)_Q + (m(\phi) \nabla \mu, \nabla \xi)_Q, \\[1ex]
\label{weak:psi} 0 & = -(\psi, \pd_t \Upsilon)_\Sigma + (\bu_\tau \cdot \nS \psi, \Upsilon)_\Sigma + ( \mathcal{L}(\psi), \Upsilon)_\Sigma,
\end{alignat}
\end{subequations}
holding for all $\bm{v} \in C^\infty_0(0,T;\C^\infty_{\div}(\overline{\Omega}))$, $\bm{z} \in C^\infty_0(0,T;\C^\infty_*(\overline{\Omega}))$, $\xi \in C^\infty_0(0,T;C^1(\overline{\Omega}))$ and $\Upsilon \in C^\infty_0(0,T;C^0(\Gamma))$, along with 
\begin{subequations}
\begin{alignat}{2}
\label{weak:mu} \mu & = - \Delta \phi + F'(\phi) && \quad \text{ a.e.~in } Q, \\[1ex]
\label{weak:J} \bm{J} & = -\tfrac{\overline{\rho}_1 - \overline{\rho}_2}{2} m(\phi) \nabla \mu && \quad \text{ a.e.~in } Q, \\[1ex]
\label{weak:rho} \rho(\phi) & = \tfrac{\overline{\rho}_1 - \overline{\rho}_2}{2} \phi + \tfrac{\overline{\rho}_1+ \overline{\rho}_2}{2} && \quad  \text{ a.e.~in } Q, \\[1ex]
\label{weak:L} \mathcal{L}(\psi) & = - \Delta_\Gamma \psi + \pdnu \phi + \zeta \psi + G'(\psi) && \quad \text{ a.e.~on } \Sigma.
\end{alignat}
\end{subequations}
\item Energy inequality
\begin{equation}\label{energy:ineq}
\begin{aligned}
& E(\bu(t), \bo(t), \phi(t), \psi(t)) \\
& \qquad + \int_s^t \int_\Omega m(\phi) |\nabla \mu|^2 + 2 \eta(\phi) |\D \bu|^2 + 4 \eta_r(\phi) | \tfrac{1}{2} \curl \bu - \bo|^2 \, dx \, d\tau \\
& \qquad + \int_s^t \int_\Omega c_0(\phi) |\div \bo|^2 + 2 c_d(\phi) |\D \bo|^2 + 2 c_a(\phi) |\W \bo|^2 \, dx \, d \tau \\
& \qquad + \int_s^t \int_{\Gamma} |\bu_\tau|^2 + |\bo_\tau|^2 + |\mathcal{L}(\psi)|^2 \, dS \, d \tau \\
& \quad \leq E(\bu(s), \bo(s), \phi(s), \psi(s))
\end{aligned}
\end{equation}
holds for all $t \in [s,\infty)$ and almost all $s \in [0,\infty)$ including $s = 0$, where the total energy $E$ is given by
\begin{equation}\label{total:energy}
\begin{aligned}
E(\bu, \bo, \phi, \psi) :& = \int_\Omega \frac{\rho(\phi)}{2} |\bu|^2 + \frac{\rho(\phi)}{2} |\bo|^2 + \frac{1}{2}|\nabla \phi|^2 + F(\phi) \, dx \\
& \quad + \int_{\Gamma} \frac{1}{2} |\nS \psi|^2 + \frac{\zeta}{2} |\psi|^2 + G(\psi) \, dS
\end{aligned}
\end{equation}
\item Initial conditions
\[
(\bu, \bo, \phi, \psi) \vert_{t = 0} = (\bu_0, \bo_0, \phi_0, \mathrm{tr}(\phi_0)).
\]
\end{itemize}
\end{defn}

Our result concerning the existence of weak solutions to \eqref{ana:bulk}-\eqref{ana:bc} is formulated as follows:

\begin{thm}\label{thm:exist}
Under Assumption \ref{ass:main}, for any $T \in (0,\infty)$ there exists a weak solution to \eqref{ana:bulk}-\eqref{ana:bc} in the sense of Definition \ref{defn:weaksoln}.
\end{thm}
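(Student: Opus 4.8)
The plan is to adapt the two-level approximation scheme of \cite{GGW,GLW} for the scalar Navier--Stokes--Cahn--Hilliard system with GNBC to the present coupled micropolar system, with the additional work concentrated in the micro-rotation balance \eqref{simp:bo} and its curl-coupling to \eqref{simp:bu}. \textbf{Step 1 (regularised problem).} Fix parameters $\eps>0$ and $q>d$. I would replace the singular $F$ by polynomials $F_\eps\in C^2(\R)$ with $F_\eps''\ge -C_F$ uniformly, extend the affine density $\rho$ from $[-1,1]$ to a globally Lipschitz $\rho_\eps:\R\to\R$ pinched between two positive constants, add a viscous regularisation $\eps\pd_t\phi$ to \eqref{simp:mu}, add the non-Newtonian stresses $-\div(|\nabla\bu|^{q-2}\nabla\bu)+|\bu|^{q-2}\bu$ and $-\div(|\nabla\bo|^{q-2}\nabla\bo)+|\bo|^{q-2}\bo$ to \eqref{simp:bu} and \eqref{simp:bo}, and keep the remainder terms $-\tfrac12 R\bu$, $-\tfrac12 R\bo$ with $R=-\tfrac12 m(\phi)\nabla\rho_\eps'(\phi)\cdot\nabla\mu$. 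As indicated in Section~\ref{sec:intro}, these extra contributions are exactly what keeps the energy identity closed once $\rho$ is modified outside $[-1,1]$.

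\textbf{Step 2 (solvability of the regularised system).} I would discretise in time by an implicit Euler scheme and solve the resulting stationary nonlinear system at each step via the Leray--Schauder fixed point theorem: the $q$-Laplacian terms give coercivity of $(\bu,\bo)$ in $\WW^{1,q}\times\WW^{1,q}$ and the term $\eps\pd_t\phi$ gives coercivity of $\phi$ in $H^1$, while the cross-terms $2\curl(\eta_r(\phi)\bo)$ and $2\eta_r(\phi)(\curl\bu-2\bo)$ are controlled by testing the $\bu$- and $\bo$-equations with $(\bu,\bo)$ and integrating by parts via \eqref{IBP:curl} using the no-penetration conditions, whereupon the coupling collects into the non-negative dissipation $4\eta_r(\phi)|\tfrac12\curl\bu-\bo|^2$ appearing in \eqref{energy:ineq}. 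Summing the discrete energy identities yields estimates uniform in the time step; a discrete Aubin--Lions argument for $\phi$, time-shift estimates for $\bu,\bo$, and weak lower semicontinuity of the energy then allow passage to the time-continuous limit, giving a weak solution of the regularised problem together with a regularised energy inequality.

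\textbf{Step 3 (limits in the regularisation).} The regularised energy inequality furnishes bounds uniform in the parameters: $\bu_\eps,\bo_\eps\in L^\infty(0,T;\HH)\cap L^2(0,T;\HH^1)$ (using $|\nabla\bo|^2=|\D\bo|^2+|\W\bo|^2$, a Korn-type inequality for $\bu$, and the boundary control of $\bu_\tau,\bo_\tau$), $(\phi_\eps,\psi_\eps)\in L^\infty(0,T;\VV^1)$, $\nabla\mu_\eps\in L^2(Q)$, and $\mathcal{L}_\eps(\psi_\eps)\in L^2(\Sigma)$; testing \eqref{simp:mu} with $\phi_\eps-\tfrac1{|\Omega|}\int_\Omega\phi_0$ and exploiting the convex splitting $F=F_0+F_1$ together with the sign conditions in \eqref{ass:F} bounds $\int_\Omega F_\eps'(\phi_\eps)$, hence $\mu_\eps\in L^2(0,T;H^1(\Omega))$, $F_\eps'(\phi_\eps)\in L^2(Q)$, and by elliptic regularity for the boundary value problem \eqref{simp:mu} with \eqref{weak:L} also $(\phi_\eps,\psi_\eps)\in L^2(0,T;\VV^2)$. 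After deriving time-derivative bounds for $\rho_\eps(\phi_\eps)\bu_\eps$, $\rho_\eps(\phi_\eps)\bo_\eps$ and $\phi_\eps$ in suitable dual spaces, Aubin--Lions together with a Lions--Magenes-type argument for the very weak forms \eqref{weak:ns}--\eqref{weak:w} gives strong convergence, along a subsequence, of $\bu_\eps,\bo_\eps,\phi_\eps$ and their traces; passing first to the limit that recovers the singular potential forces $|\phi|\le 1$ a.e., whence $\rho_\eps(\phi)=\rho(\phi)$, $\rho_\eps'(\phi)\equiv\tfrac{\ov{\rho}_1-\ov{\rho}_2}{2}$ and $R\equiv0$, and a second limit removes the artificial viscosity $\eps\pd_t\phi$ and the non-Newtonian stresses (which carry vanishing prefactors). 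The remaining nonlinearities pass using strong convergence of $\phi_\eps$ for the coefficients $\eta_r(\phi_\eps),m(\phi_\eps),\ldots$ and of $\bu_\eps,\bo_\eps$ for the inertial and convective terms (cf.\ the reformulations \eqref{mom:alt:2}, \eqref{ang:mom:alt:2}), while weak lower semicontinuity delivers \eqref{energy:ineq} and the $C_w$-in-time regularity recovers the initial data.

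\textbf{Expected main obstacle.} As in \cite{AGG,GGW}, the crux is the compactness of $\bu_\eps$ and $\bo_\eps$ under non-matched densities: only the products $\rho_\eps(\phi_\eps)\bu_\eps$ and $\rho_\eps(\phi_\eps)\bo_\eps$ admit a natural time-derivative bound, in spaces of negative order, so the relative-flux transport terms $\tfrac{\ov{\rho}_1-\ov{\rho}_2}{2}(m(\phi)\nabla\mu\cdot\nabla)\bu$ and its $\bo$-analogue, together with the divergence-form balances \eqref{mom:alt:2}--\eqref{ang:mom:alt:2}, must be used to convert these into usable strong compactness for $\bu_\eps,\bo_\eps$ themselves. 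The genuinely new point relative to \cite{GGW} is that this has to be carried out for the enlarged system while tracking the curl-coupling so that no sign-indefinite term spoils the a priori estimate; once the perfect-square structure $4\eta_r(\phi)|\tfrac12\curl\bu-\bo|^2$ is used consistently, the micro-rotation equation behaves like a second Navier--Stokes-type balance and the remaining arguments run parallel to the scalar case of \cite{GGW,GLW}.
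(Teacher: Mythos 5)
Your proposal follows essentially the same route as the paper: a two-level approximation in which the singular potential is replaced by smooth approximations, the density is extended off $[-1,1]$ with compensating $\tfrac12 R\bu$, $\tfrac12 R\bo$ terms, a viscous term is added to the chemical potential equation and non-Newtonian $q$-stresses to both momentum balances, the regularised problem is solved by implicit time discretisation plus Leray--Schauder, and the limits are taken in the order that first recovers the singular potential (forcing $|\phi|\le 1$ and $R=0$) and then removes the remaining regularisations, with the curl-coupling absorbed into the dissipation term $4\eta_r(\phi)|\tfrac12\curl\bu-\bo|^2$ exactly as in the paper's energy estimates. Apart from minor normalisation differences (the paper keeps separate parameters $\sigma,\eps,\kappa$ and requires $q>2d$), your plan is a faithful match to the paper's proof.
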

Note that uniqueness of weak solutions even in the matched density case $\overline{\rho}_1 = \overline{\rho}_2$ in two spatial dimensions remains an open problem, cf.~\cite{GGM}. The proof of Theorem~\ref{thm:exist} follows a similar structure to that of \cite{GGW}, which involves first studying an approximating problem with regular potentials and additional regularisation terms. Then, we obtain solutions of an associated problem with singular potentials, and finally solutions to the original model \eqref{ana:bulk}-\eqref{ana:bc}. 

\subsection{Approximate model with regular potentials and regularisation terms}
The standard approach is to approximate the singular logarithmic part $F_0$ by a sequence of smooth polynomial potentials. But this in turn leads to the loss of the boundedness property of $\phi$ in the interval $[-1,1]$, and consequently $\rho'(\phi) \neq \frac{\ov{\rho}_1 - \ov{\rho}_2}{2}$ if $\phi \notin [-1,1]$.  Hence, the appropriate model to consider is \eqref{bu:PF:model}-\eqref{bu:PF:GNBC} where the additional terms involving $R = -m(\phi) \nabla \mu \cdot \nabla \rho'(\phi)$ appear. Following \cite{GGW} we add non-Newtonian type regularizing terms to the linear pseudo-momentum balance \eqref{mom:alt:2} and the angular momentum balance \eqref{ang:mom:alt:2}, as well as a viscous regularisation term to the equation \eqref{simp:mu}.  For $\eps, \sigma, \kappa \in (0,1]$ and $q > 2d$, the approximate model expressed in strong formulation reads as
\begin{subequations}\label{app:model1}
\begin{alignat}{2}
\div \bu  & = 0, \\[1ex]
\pd_t \phi + \bu \cdot \nabla \phi & = \div (m(\phi) \nabla \mu), \label{app:model:phi} \\[1ex]
\mu & = - \Delta \phi + F_\kappa'(\phi) + \sigma \pd_t \phi, \label{app:model:mu} \\[1ex]
\pd_t (\rho \bu) + \div (\rho \bu \otimes \bu) & = - \nabla p + \mu \nabla \phi +  \div (2 \eta(\phi) \D \bu + 2 \eta_r(\phi) \W \bu) \label{app:model:u} \\[1ex]
\notag & \quad + 2 \curl(\eta_r(\phi) \bo) + \div (\rho'(\phi) m(\phi) \nabla \mu \otimes \bu )+ \frac{1}{2} R\bu \\[1ex]
\notag & \quad + \eps \Big ( \div ( |\nabla \bu|^{q-2} \nabla \bu) - |\bu|^{q-2} \bu  \Big ) \\[1ex]
\pd_t (\rho \bo) + \div (\rho \bu \otimes  \bo ) & = \div (c_0(\phi) (\div \bo) \I + 2c_d(\phi) \D \bo +2 c_a(\phi) \W \bo)  \label{app:model:w}  \\[1ex]
\notag & \quad  + 2 \eta_r(\phi)(\curl \bv - 2 \bo) + \div (\rho'(\phi) m(\phi) \nabla \mu \otimes \bo) \\
\notag & \quad +  \frac{1}{2} R \bo + \eps \Big ( \div (|\nabla \bo|^{q-2} \nabla \bo) - |\bo|^{q-2} \bo \Big ),
\end{alignat}
\end{subequations}
where $R = - m(\phi) \nabla \mu \cdot \nabla \rho'(\phi)$ and subjected to the modified boundary conditions (on account of the non-Netwonian regularisation terms)
\begin{subequations}\label{app:model1:bc}
\begin{alignat}{2}
& m(\phi) \pdnu \mu = 0, \quad \bu \cdot \bnu = 0, \quad \bo \cdot \bnu = 0  \\[1ex]
& \nd{\psi} = - \big ( \zeta \psi + g'(\psi) -  \Delta_\Gamma \psi + \pdnu \psi \big ) = - \mathcal{L}(\psi), \quad \psi = \phi \vert_{\Sigma}, \\[1ex]
& \bu_\tau +  \Big ( \bnu \cdot [\eps |\nabla \bu|^{q-2} \nabla \bu + 2\eta(\phi) \D \bu + 2\eta_r(\phi) \W \bu - 2\eta_r(\phi) \bm{\Omega}]  \Big )_\tau = \mathcal{L}(\psi) \nS \phi, \\[1ex]
& \bo_\tau + \Big (\bnu \cdot [ \eps |\nabla \bo|^{q-2} \nabla \bo + 2c_d(\phi) \D \bo + 2 c_a(\phi) \W \bo] \Big )_\tau  = \bm{0}.
\end{alignat}
\end{subequations}
In comparison with \cite{GGW}, our non-Newtonian regularisation terms contains the full gradient due to the presence of the anti-symmetric tensors $\W\bu$ and $\W \bo$. In \eqref{app:model:mu} the function $F_\kappa$ is defined as
\[
F_\kappa(s) = F_{0,\kappa}(s) + F_1(s),
\]
where we take $F_{0,\kappa}$ as a polynomial approximation of $F_0$.  For instance a quadratic approximation entails the definition
\[
F_{0,\kappa}''(s) = \begin{cases}
F_0''(1-\kappa) & \text{ for } s \geq 1- \kappa, \\
F_0''(s) & \text{ for } |s| \leq 1- \kappa, \\
F_0''(\kappa - 1) & \text{ for } s \leq -1 + \kappa, 
\end{cases} \quad \text{ with } F_{0,\kappa}(0) = F_{0,\kappa}'(0) = 0,
\] 
which further satisfies
\begin{align}\label{F0approx}
|F_{0,\kappa}'(s)| \leq |F_0'(s)|, \quad |F_{0,\kappa}(s)| \leq |F_0(s)| \quad \forall s \in (-1,1).
\end{align}
Then, the outline of the proof of Theorem \ref{thm:exist} can be roughly summarized as
\begin{itemize}
\item Step 1: Establish the existence of a weak solution to \eqref{app:model1}-\eqref{app:model1:bc} in the sense of Definition \ref{defn:app:solution} below.
\item Step 2: Pass to the limit $\kappa \to 0$ to obtain the singular logarithmic potential, along with the property that $|\phi| < 1$ a.e.~in $Q$ and $|\psi| \leq 1$ a.e.~on $\Sigma$. Consequently, the terms involving $R = - m(\phi) \nabla \mu \cdot \nabla \rho'(\phi)$ in \eqref{app:model:u} and \eqref{app:model:w} vanish.
\item Step 3: Pass to the limit $\eps \to 0$ and $\sigma \to 0$ to obtain a weak solution to \eqref{ana:bulk}-\eqref{ana:bc} in the sense of Definition \ref{defn:weaksoln}.
\end{itemize}
Since the equation for the micro-rotation $\bo$ is structurally similar to the equation for the velocity $\bu$, both variables share similar regularities in space and in time. Hence, the main deviation from \cite{GGW} lies in Step 1, which we will detail below.  The modifications to \cite{GGW} needed in Steps 2 and 3 to account for the micro-rotation are minimal and thus we omit the details.

To establish the existence of weak solutions for the approximate model \eqref{app:model1}-\eqref{app:model1:bc} it is sufficient to impose the following assumptions of $F_{\kappa}$ and on the density function $\rho$.

\begin{assump}\label{ass:main:approx}
\
\begin{enumerate}[label=$(\mathrm{B \arabic*})$, ref = $\mathrm{B \arabic*}$]
\item \label{ass:rho:approx} The mass density function $\rho \in C^2(\R)$ is bounded from below by a positive constant $\rho_0$ and satisfies $\rho, \rho', \rho'' \in L^\infty(\R)$.
\item \label{ass:F:approx} The approximate potential function $F_\kappa \in C^2(\R)$ satisfies
\[
F_\kappa(s) \geq \frac{\delta}{2} |s|^2 - c_F, \quad F_{\kappa}''(s) \geq - c_F, \quad |F_\kappa''(s)| \leq C_\kappa( 1 + |s|^p)
\]
for $p \in [1,\infty)$ if $d = 2$ and $p = 2$ if $d = 3$, where $\delta$ and $c_F$ are positive constant independent of $\kappa$, while $C_\kappa$ is a positive constant depending on $\kappa$.
\end{enumerate}
\end{assump}
We remark that \eqref{ass:rho:approx} does not conflict with equation \eqref{PF:rho:equ:R}, since multiplying \eqref{app:model:phi} with $\rho'(\phi)$ leads to
\[
\nd \rho(\phi) = \rho'(\phi) \pd_t \phi + \rho'(\phi) \nabla \phi \cdot \bu = \rho'(\phi) \div (m(\phi) \nabla \mu) = \div (\rho'(\phi) m(\phi) \nabla \mu) + R,
\]
which is \eqref{PF:rho:equ:R}. We now provide the definition of a weak solution to \eqref{app:model1}-\eqref{app:model1:bc} and the existence result.

\begin{defn}\label{defn:app:solution}
A quintuple of functions $(\bu, \bo, \mu, (\phi, \psi))$ is a weak solution to \eqref{app:model1}-\eqref{app:model1:bc} on the time interval $[0,T]$ if the following properties are satisfied:
\begin{itemize}
\item Regularity
\begin{equation*}
\begin{alignedat}{2}
\bu & \in C_w([0,T]; \HH_{\div}) \cap L^2(0,T;\HH^1) \cap L^q(0,T;\WW^{1,q}), \\
\bo &  \in C_w([0,T]; \HH) \cap L^2(0,T;\HH^1) \cap L^q(0,T;\WW^{1,q}), \\
(\phi, \psi) & \in C_w([0,T];\VV^1) \cap L^2(0,T;\VV^2), \quad \pd_t \phi \in L^2(0,T;L^2(\Omega)), \\
\mu & \in L^2(0,T;H^1(\Omega)).
\end{alignedat}
\end{equation*}
\item Equations
\begin{subequations}\label{equ:app:1}
\begin{alignat}{2}
\label{weak:app:ns} 0 &=  -(\rho(\phi) \bu, \pd_t \bm{v})_Q + (\div(\rho \bu \otimes \bu), \bm{v})_Q  \\
\notag & \quad + (2 \eta(\phi) \D \bu, \D \bm{v})_Q + (2 \eta_r(\phi) \W \bu, \W \bm{v})_Q - (2\eta_r(\phi) \bo, \curl \bm{v})_Q  \\
\notag & \quad  + (\bu_\tau, \bm{v}_\tau)_\Sigma - (\mathcal{L}(\psi) \nS \psi, \bm{v}_\tau)_{\Sigma} - (( \bm{J} \otimes \bu ), \nabla \bm{v})_Q - (\mu \nabla \phi, \bm{v})_Q \\
\notag & \quad + (\eps |\nabla \bu|^{q-2} \nabla \bu, \nabla \bm{v})_Q + (\eps |\bu|^{q-2} \bu, \bm{v})_Q - \tfrac{1}{2}(R \bu, \bm{v})_Q, \\[1ex]
\label{weak:app:w} 0 & = -(\rho(\phi) \bo, \pd_t \bm{z})_Q + (\div (\rho \bu \otimes \bo), \bm{z})_Q + (\bo_\tau, \bm{z}_\tau)_{\Sigma}  \\
\notag & \quad + (c_0(\phi) \div \bo, \div \bm{z})_Q  + (2 c_d(\phi) \D \bo, \D \bm{z})_Q + (2 c_a(\phi) \W \bo, \W \bm{z})_Q\\
\notag & \quad  - (2 \eta_r(\phi) (\curl \bu - 2 \bo), \bm{z})_Q - ((\bm{J} \otimes \bo), \nabla \bm{z})_Q, \\
\notag & \quad + (\eps |\nabla \bo|^{q-2} \nabla \bo, \nabla \bm{z})_Q + (\eps |\bo|^{q-2} \bo, \bm{z})_Q - \tfrac{1}{2}(R \bo, \bm{z})_Q,\\[1ex]
\label{weak:app:phi} 0 & = -(\phi, \pd_t \xi)_Q + (\bu \cdot \nabla \phi, \xi)_Q + (m(\phi) \nabla \mu, \nabla \xi)_Q, \\[1ex]
\label{weak:app:psi} 0 & = -(\psi, \pd_t \Upsilon)_\Sigma + (\bu_\tau \cdot \nS \psi, \Upsilon)_\Sigma + ( \mathcal{L}(\psi), \Upsilon)_\Sigma, \\
\label{weak:app:R:u} 0 & = (R\bu, \bm{v}) + (m(\phi) (\nabla \mu \cdot \nabla \rho'(\phi))\bu, \bm{v})_Q, \\
\label{weak:app:R:w} 0 & = (R\bo, \bm{z}) + (m(\phi) (\nabla \mu \cdot \nabla \rho'(\phi))\bo, \bm{z})_Q,
\end{alignat}
\end{subequations}
holding for all $\bm{v} \in C^\infty_0(0,T;\C^\infty_{\div}(\overline{\Omega}))$, $\bm{z} \in C^\infty_0(0,T;\C^\infty_*(\overline{\Omega}))$, $\xi \in C^\infty_0(0,T;C^1(\overline{\Omega}))$ and $\Upsilon \in C^\infty_0(0,T;C^0(\Gamma))$, along with 
\begin{subequations}\label{equ:app:2}
\begin{alignat}{2}
\label{weak:mu:approx} \mu & = - \Delta \phi + F_\kappa'(\phi) + \sigma \pd_t \phi && \quad \text{ a.e.~in } Q, \\[1ex]
\label{weak:J:approx} \bm{J} & = -\rho'(\phi) m(\phi) \nabla \mu && \quad \text{ a.e.~in } Q, \\[1ex]
\label{weak:L:approx} \mathcal{L}(\psi) & = - \Delta_\Gamma \psi + \pdnu \phi + \zeta \psi + G'(\psi) && \quad \text{ a.e.~on } \Sigma
\end{alignat}
\end{subequations}
\item Energy inequality
\begin{equation}\label{energy:ineq:approx}
\begin{aligned}
& E_\kappa(\bu(t), \bo(t), \phi(t), \psi(t)) \\
& \qquad + \int_s^t \int_\Omega m(\phi) |\nabla \mu|^2 + 2 \eta(\phi) |\D \bu|^2 + 4 \eta_r(\phi) | \tfrac{1}{2} \curl \bu - \bo|^2 \, dx \, d\tau \\
& \qquad + \int_s^t \int_\Omega c_0(\phi) |\div \bo|^2 + 2 c_d(\phi) |\D \bo|^2 + 2 c_a(\phi) |\W \bo|^2 \, dx \, d \tau \\
& \qquad + \int_s^t \int_{\Gamma} |\bu_\tau|^2 + |\bo_\tau|^2 + |\mathcal{L}(\psi)|^2 \, dS \, d \tau \\
& \qquad + \int_s^t \int_\Omega \sigma |\pd_t \phi|^2 + \eps \big (|\nabla \bu|^{q} + |\bu|^q + |\nabla \bo|^q + |\bo|^q \big ) \, dx \, d \tau \\
& \quad \leq E_\kappa(\bu(s), \bo(s), \phi(s), \psi(s))
\end{aligned}
\end{equation}
holds for all $t \in [s,\infty)$ and almost all $s \in [0,\infty)$ including $s = 0$, where the total energy $E_\kappa$ is given by \eqref{total:energy} with $F$ replaced by $F_\kappa$.
\item Initial conditions
\[
(\bu, \bo, \phi, \psi) \vert_{t = 0} = (\bu_0, \bo_0, \phi_0, \mathrm{tr}(\phi_0)).
\]
\end{itemize}
\end{defn}

\begin{thm}\label{thm:appmodel}
Under \eqref{ass:dom}, \eqref{ass:coeff}, \eqref{ass:G}, \eqref{ass:rho:approx} and \eqref{ass:F:approx}, for any $T \in (0,\infty)$, $\eps, \sigma, \kappa \in (0,1]$, and initial conditions $\bu_0 \in \HH_{\div}$, $\bo_0 \in \HH$, $(\phi_0, \psi_0) \in \VV^1$, there exists a global weak solution $(\bu_{\sigma,\eps,\kappa}, \bo_{\sigma,\eps,\kappa}, \mu_{\sigma, \eps,\kappa}, (\phi_{\sigma,\eps,\kappa}, \psi_{\sigma,\eps,\kappa}))$ to the approximate model \eqref{app:model1}-\eqref{app:model1:bc} in the sense of Definition \ref{defn:app:solution}. 
\end{thm}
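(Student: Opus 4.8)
The plan is to follow the scheme of \cite{GGW}, the only genuinely new feature being the coupling with the micro-rotation balance. I would proceed in three stages: (i) a semi-implicit time discretisation of \eqref{app:model1}--\eqref{app:model1:bc} with uniform step size $h = T/N$, solving one stationary nonlinear system per time level; (ii) $h$-uniform a priori estimates reproducing a discrete analogue of \eqref{energy:ineq:approx}; (iii) passage to the limit $h \to 0$ by compactness. Since the equation for $\bo$ has the same structure as that for $\bu$ --- both are Navier--Stokes-type balances with the same density weight $\rho(\phi)$, the same $\eps$-regularisation $\eps(\div(|\nabla \cdot|^{q-2}\nabla\cdot) - |\cdot|^{q-2}\cdot)$, and the same relative-flux transport term --- the two fields enjoy identical regularity, and the deviations from \cite{GGW} are confined to bookkeeping of the curl-couplings $2\curl(\eta_r(\phi)\bo)$ and $2\eta_r(\phi)(\curl\bu - 2\bo)$, the frictional boundary term for $\bo$, and the antisymmetric boundary contribution $\eta_r(\phi)\bm{\Omega}$.

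\textbf{Step 1: the time-discrete problem and Leray--Schauder.} Given the iterates $(\bu^{k-1},\bo^{k-1},\phi^{k-1},\psi^{k-1})$ at $t_{k-1}$, I would solve the backward-Euler discretisation of the conservative forms \eqref{app:model:u}--\eqref{app:model:w} together with the discretised Cahn--Hilliard and dynamic boundary equations \eqref{app:model:phi}, \eqref{app:model:mu} and the boundary law in \eqref{app:model1:bc}. Solvability of this stationary system I would obtain via the Leray--Schauder principle, using the solution operator: fix a velocity $\tilde\bu \in \HH_{\div}$, solve the essentially monotone elliptic subsystem for $(\phi^k,\psi^k,\mu^k)$ --- uniquely solvable for $h$ small via the convexity splittings $F_\kappa = F_{0,\kappa}+F_1$ and $\hat G = \tfrac{\zeta}{2}|\cdot|^2 + G_0 + G_1$ from \eqref{ass:F:approx}, \eqref{ass:G} and the $L^2$-coercivity contributed by the $\sigma\pd_t\phi$ term --- and then, with $\phi^k$ frozen, solve the coupled $(\bu^k,\bo^k)$ system. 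The latter is well posed because its principal part $\tfrac{1}{h}\rho(\phi^k)(\cdot) + \eps(-\div(|\nabla\bm{w}|^{q-2}\nabla\bm{w}) + |\bm{w}|^{q-2}\bm{w})$ is strictly monotone and coercive on $\WW^{1,q}$, while all skew terms ($\W\bu$, $\W\bo$, the cross-curl couplings, the $\bm{\Omega}$ boundary term) are bounded and, being antisymmetric, do not destroy coercivity, so that existence follows from pseudo-monotone operator theory. Composing the two solution operators yields a map $\tilde\bu \mapsto \bu^k$ that is continuous and compact (the Cahn--Hilliard step gains regularity, and $\WW^{1,q}\Subset\HH_{\div}$ since $q>2d$); the a priori bound on the set of fixed points of $t\,\mathcal{T}$, $t\in[0,1]$, is precisely the discrete energy estimate of Step 2.

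\textbf{Step 2: $h$-uniform estimates.} Testing the discrete momentum equation with $\bu^k$, the discrete micro-rotation equation with $\bo^k$, the Cahn--Hilliard equation with $\mu^k$, equation \eqref{app:model:mu} with $(\phi^k-\phi^{k-1})/h$, and the boundary equation with $\mathcal{L}(\psi^k)$, then summing, one recovers at the discrete level the dissipation identity underlying \eqref{energy:ineq:approx}. Two structural points are decisive. First, as in \cite{AGG,GGW}, the term $\tfrac12(R^k\bu^k,\bu^k)$ together with the transport contribution from $\div(\rho'(\phi^k)m(\phi^k)\nabla\mu^k\otimes\bu^k)$ combines --- using $\nd\rho(\phi) = \div(\rho'(\phi)m(\phi)\nabla\mu) + R$, cf.\ the remark after Assumption~\ref{ass:main:approx} --- with the discrete $\pd_t\rho$ to leave only the controllable capillary term $(\mu^k\nabla\phi^k,\bu^k)$ and no uncontrolled growth, and the same holds verbatim for $\bo^k$. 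Second, by \eqref{IBP:curl} and the no-penetration conditions the curl-couplings assemble as $-(2\eta_r(\phi^k)\bo^k,\curl\bu^k) + (2\eta_r(\phi^k)(\curl\bu^k-2\bo^k),\bo^k) = \int_\Omega 4\eta_r(\phi^k)|\tfrac12\curl\bu^k - \bo^k|^2\,dx \geq 0$, exactly the dissipation term in \eqref{energy:ineq:approx}. Together with telescoping of the discrete time derivatives, the lower bound $F_\kappa \geq \tfrac{\delta}{2}|\cdot|^2 - c_F$, and \eqref{Eringen:cond} for the angular viscosities, this gives $h$-uniform bounds for the time interpolants: $(\bu,\bo)$ in $L^\infty(\HH_{\div}\times\HH)\cap L^2(0,T;\HH^1\times\HH^1)\cap L^q(0,T;\WW^{1,q}\times\WW^{1,q})$, $(\phi,\psi)$ in $L^\infty(\VV^1)\cap L^2(0,T;\VV^2)$, $\mu$ in $L^2(0,T;H^1)$, and $\pd_t\phi$ in $L^2(0,T;L^2)$; a dual estimate for $\pd_t(\rho\bu)$, $\pd_t(\rho\bo)$ against $\WW^{1,q}$ follows from the equations.

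\textbf{Step 3: passage to the limit, and the main obstacle.} The uniform bounds yield weak/weak-$*$ limits $(\bu,\bo,\mu,\phi,\psi)$ in the spaces of Definition~\ref{defn:app:solution}. Aubin--Lions--Simon, using the $L^2(0,T;L^2)$ control of $\pd_t\phi$ from the $\sigma\pd_t\phi$ regularisation, gives $\phi\to\phi$ strongly in $L^2(0,T;H^s(\Omega))$ for $s<2$, hence $\rho(\phi),\eta(\phi),\eta_r(\phi),c_0(\phi),c_d(\phi),c_a(\phi),m(\phi)$ converge a.e.\ and in suitable $L^p$, and $\nabla\rho'(\phi)=\rho''(\phi)\nabla\phi$ converges strongly in $L^2(Q)$; the dual estimates together with the $\WW^{1,q}$-bounds then give $\bu\to\bu$, $\bo\to\bo$ strongly in $L^2(Q)$. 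One passes to the limit in every term: the convective and relative-flux transport terms by strong-times-weak products, the $\eps$-regularisation by Minty's monotonicity trick, the potential $F_\kappa'(\phi)$ by the growth bound in \eqref{ass:F:approx} (with $p=2$ when $d=3$) and a.e.\ convergence, the couplings $\curl(\eta_r(\phi)\bo)$, $R\bu$, $R\bo$ again by strong-times-weak (here the strong convergence of $\nabla\phi$ is exactly what lets us treat the products involving $\nabla\mu$, since $R=-m(\phi)\nabla\mu\cdot\nabla\rho'(\phi)$); finally the energy inequality \eqref{energy:ineq:approx} follows by weak lower semicontinuity of the convex energy and of the dissipation integrals, plus strong convergence of the initial data. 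The crux is Step~1: establishing solvability of the fully coupled, density-weighted time-discrete system in the presence of the non-monotone skew and cross-curl terms and the $R$-dependent source, which compels the Leray--Schauder a priori bound to coincide with the discrete energy estimate; one must verify that the cancellations of Step~2 already take place at the stationary level and that the $\eps$-regularisation furnishes enough coercivity on $\WW^{1,q}\times\WW^{1,q}$ to absorb the bounded but indefinite micropolar couplings. The secondary delicate point is the limit in the $\nabla\mu$-products, which is handled precisely by the strong convergence of $\nabla\phi$ noted above.
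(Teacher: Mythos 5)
Your overall architecture -- an implicit time discretisation, solvability of each time step via the Leray--Schauder principle, a discrete energy estimate obtained by testing with $\bu$, $\bo$, $\mu$, $(\phi-\phi_k)/h$, $(\psi-\psi_k)/h$ together with the convex--concave splitting and the identities of \cite{ADG}, and a final compactness passage $h\to 0$ -- is exactly the paper's (Lemma~\ref{lem:discrete} and Lemma~\ref{lem:compact:N}), and your observation that the cross--curl couplings assemble into the nonnegative dissipation $4\eta_r(\phi_k)|\tfrac12\curl\bu-\bo|^2$ is the same structural point the paper exploits. Where you deviate is the fixed-point set-up: you propose a staggered map $\tilde\bu\mapsto(\phi,\psi,\mu)\mapsto(\bu,\bo)$ and a homotopy $t\,\mathcal{T}$ acting on the velocity alone, whereas the paper writes the \emph{fully coupled} stationary system as $\mathcal{M}_k(\bm{p})=\mathcal{F}_k(\bm{p})$, inverts the monotone, coercive principal part $\mathcal{M}_k$ (which contains the $\eps$-regularisation, the viscous terms, the curl couplings, the Wentzell Laplacian and the $\mu$-elliptic operator, each augmented by an $\int_\Omega\mu\,dx$ device), and applies Leray--Schauder to $\mathcal{K}_k=\mathcal{F}_k\circ\mathcal{M}_k^{-1}$ with the homotopy parameter $\lambda$ multiplying only the lower-order part $\mathcal{F}_k$. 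Note also that the curl couplings are not merely ``bounded skew perturbations'': the paper keeps them inside $L_k$ precisely because there they preserve monotonicity and coercivity.

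The genuine gap is the one you yourself flag and then leave open: the a priori bound along the homotopy. In your scheme the fixed-point equation for $t<1$ reads $\bu=t\hat\bu$ with $\hat\bu$ the fluid output computed from Cahn--Hilliard data generated by the input $\bu$; the cancellations that produce the energy estimate (capillary term $(\mu\nabla\phi_k,\bu)_\Omega$ against the transport term in \eqref{Time:phi}, and the $R$-corrections against the discrete mass balance as in Remark~\ref{rem:R:J}) hold only when the velocity appearing in both subsystems is the \emph{same} field, so they are broken by the factor $t$, and you give no replacement estimate. The paper resolves exactly this by deriving the bound for the explicit $\lambda$-system \eqref{LS:cal}: the scaled testing still closes, and -- a point absent from your proposal -- the mean value of $\mu$ is controlled by the $(1-\lambda)|\int_\Omega\mu\,dx|^2$ term for $\lambda<\tfrac12$ and by integrating \eqref{LS:mu}--\eqref{LS:psi} for $\lambda\ge\tfrac12$, after which elliptic regularity for $\mu$ and for the Wentzell system upgrades \eqref{LS:est:4} to the $X$-norm bound \eqref{LS:est:5} needed to close the fixed-point argument in the correct topology (your proposal never explains how the $H^2$/$\VV^2$ bounds required for compactness of the composed map and for the Leray--Schauder radius are obtained uniformly in the homotopy parameter). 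A secondary weak point is the limit of the $R$-terms: treating $R^N\bm{y}^N$ ``strong-times-weak'' via strong convergence of $\nabla\phi^N$ runs into a time-integrability deficit (the product of $\nabla\phi^N$ and $\bu^N$ is not obviously strong in $L^2(Q)$ against the merely weakly convergent $\nabla\mu^N$); the paper instead rewrites $R^N=\pd_{t,h}^-\rho^N+\div(\rho_h^N\bu^N+\bm{J}^N)$, integrates by parts onto $\bm{y}^N\cdot\bm{s}$, and identifies the limit by testing the limiting $\phi$-equation with $\rho'(\phi)\bm{y}\cdot\bm{s}$, which is where the restrictions $q\ge 12$ and $\bm{s}\in L^\infty(0,T;\WW^{1,4}\cap\HH^{3/2+\delta})$ in Lemma~\ref{lem:compact:N} come from. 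Finally, a small omission: Lemma~\ref{lem:discrete} needs $(\phi_k,\psi_k)\in\VV^2$, so the initial datum $(\phi_0,\psi_0)\in\VV^1$ must first be regularised (the paper does this by a short parabolic smoothing), a step your proposal skips.
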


The proof of Theorem \ref{thm:appmodel} is based on an implicit time discretisation, similarly used in \cite{ADG,GGW}. We sketch the details and remark on the modifications needed to extend the micropolar case.

\subsection{Implicit time discretisation}
For $N \in \mathbb{N}$ we define the time step $h = \frac{1}{N}$. Then, given $\bu_k \in \HH_{\div}$, $\bo_k \in \HH$, $(\phi_k, \psi_k) \in \VV^1$ with $F_\kappa'(\phi_k) \in L^2(\Omega)$, $G'(\psi_k) \in L^2(\Gamma)$ and $\rho_k = \rho(\phi_k)$, we construct the quintuple of functions:
\[
(\bu, \bo,  \mu, (\phi, \psi)) := (\bu_{k+1}, \bo_{k+1},  \mu_{k+1}, (\phi_{k+1}, \psi_{k+1}))
\] 
with 
\begin{align}\label{disc:J}
\bm{J} := \bm{J}_{k+1} := - \rho'(\phi_k) m(\phi_k) \nabla \mu_{k+1}
\end{align}
that satisfies the equations
\begin{subequations}\label{Time:dis}
\begin{alignat}{2}
\label{Time:u} 0& = \frac{1}{h}(\rho \bu - \rho_k \bu_k, \bm{v})_\Omega + (\div (\rho_k \bu \otimes \bu), \bm{v})_\Omega \\
\notag & \quad + (2 \eta(\phi_k) \D \bu, \D \bm{v})_\Omega + (2 \eta_r(\phi_k) \W \bu, \W \bm{v})_\Omega - (2 \eta_r(\phi_k) \bo, \curl \bm{v})_\Omega \\
\notag & \quad + (\bu_{\tau}, \bm{v}_\tau)_{\Gamma} - (\mathcal{L}(\psi) \nS \psi_k, \bm{v}_\tau)_{\Gamma} - (\mu \nabla \phi_k, \bm{v})_\Omega  + (( \bm{J} \cdot \nabla) \bu, \bm{v})_\Omega \\
\notag & \quad +(\eps |\nabla \bu|^{q-2} \nabla \bu, \nabla \bm{v})_\Omega + (\eps |\bu|^{q-2} \bu, \bm{v})_\Omega\\
\notag & \quad  + \frac{1}{2}( [\div \bm{J} - \tfrac{1}{h}(\rho - \rho_k) - \bu \cdot \nabla \rho_k] \bu , \bm{v})_\Omega, \\[1ex]
\label{Time:w} 0 & = \frac{1}{h}(\rho \bo - \rho_k \bo_k, \bm{z})_\Omega + (\div (\rho_k \bu \otimes \bo), \bm{z})_\Omega + (\bo_\tau, \bm{z}_\tau)_\Gamma \\
\notag & \quad + (c_0(\phi_k) \div \bo, \div \bm{z})_\Omega + (2 c_d(\phi_k) \D \bo, \D \bm{z})_\Omega + (2 c_a(\phi_k) \W \bo, \W \bm{z})_\Omega \\
\notag & \quad - (2 \eta_r(\phi_k) (\curl \bu - 2 \bo), \bm{z})_\Omega + ( (\bm{J} \cdot \nabla) \bo, \bm{z})_\Omega \\
\notag & \quad +(\eps |\nabla \bo|^{q-2} \nabla \bo, \nabla \bm{z})_\Omega + (\eps |\bo|^{q-2} \bo, \bm{z})_\Omega \\
\notag & \quad  + \frac{1}{2}( [\div \bm{J} - \tfrac{1}{h}(\rho - \rho_k) - \bu \cdot \nabla \rho_k] \bo , \bm{z})_\Omega, \\[1ex]
\label{Time:phi} 0 & = \frac{1}{h}(\phi - \phi_k) + \bu \cdot \nabla \phi_k - \div (m(\phi_k) \nabla \mu) \quad \text{ a.e.~in } \Omega, \\[1ex]
\label{Time:mu} \mu & = - \Delta \phi + F_{0,\kappa}'(\phi) + F_1'(\phi_k) + \frac{\sigma}{h} (\phi - \phi_k) \quad \text{ a.e.~in } \Omega, \\[1ex]
\label{Time:psi} 0 & = \frac{1}{h}(\psi - \psi_k) + \bu_\tau \cdot \nS \psi_k + \mathcal{L}(\psi) \quad \text{ a.e.~on } \Gamma, \\[1ex]
\label{Time:L}  \mathcal{L}(\psi) & = \pdnu \phi + \zeta \psi + G_0'(\psi) + G_1'(\psi_k) - \Delta_\Gamma \psi \quad \text{ a.e.~on } \Gamma.
\end{alignat}
\end{subequations}

\begin{remark}\label{rem:R:J}
In \eqref{Time:u} we have used the identity 
\[
\div ( \bm{J} \otimes \bu) = (\div \bm{J}) \bu + (\bm{J} \cdot \nabla) \bu
\]
and the following discretisation of \eqref{PF:rho:equ:R}
\[
\frac{1}{h}(\rho - \rho_k) + \div (\rho_k \bu + \bm{J}) = R_{k+1}, \quad R_{k+1} = - m(\phi_k) \nabla \mu \cdot \nabla \rho'(\phi_k),
\]
where $\bm{J}$ is defined in \eqref{disc:J}, to obtain the relation
\begin{align*}
& (\div (-\bm{J} \otimes \bu), \bm{v})_\Omega + \frac{1}{2}(R_{k+1} \bu, \bm{v})_\Omega \\
& \quad =  -((\bm{J} \cdot \nabla) \bu, \bm{v})_\Omega + \frac{1}{2} ( [\tfrac{1}{h}(\rho - \rho_k) + \bu \cdot \nabla \rho_k - \div \bm{J}] \bu, \bm{v})_\Omega,
\end{align*}
which serves as a discretisation of $\div (-\bm{J} \otimes \bu) + \frac{1}{2} R \bu$ in \eqref{app:model:u}. A similar relation also appears in \eqref{Time:w} for the discretisation of $\div (- \bm{J} \otimes \bo) + \frac{1}{2} R \bo$ in \eqref{app:model:w}.
\end{remark}

\begin{remark}\label{rem:cc:split}
The time discretisations of the derivatives of the free energy densities $F_\kappa$ and $G$ in \eqref{Time:mu} and \eqref{Time:L}, respectively, follows from the convex-concave splitting approach \cite{EllStu,Eyre}. In particular, using the convexity of $F_{0,\kappa}$ and concavity of $F_1$ we have the inequality
\[
(F_{0,\kappa}'(\phi) + F_1'(\phi_k), \phi - \phi_k)_{\Omega} \geq \int_\Omega F_{0,\kappa}(\phi) + F_1(\phi) \, dx - \int_\Omega F_{0,\kappa}(\phi_k) + F_1(\phi_k) \, dx.
\]
\end{remark}
Introducing the notation
\[
\UU^\eps := \begin{cases} \HH^1 & \text{ if } \eps = 0, \\
\WW^{1,q} & \text{ if } \eps > 0,
\end{cases} \quad \text{ and } \quad \UU^\eps_{\div} := \begin{cases} \HH^1_{\div} & \text{ if } \eps = 0, \\
\WW^{1,q}_{\div} & \text{ if } \eps > 0,
\end{cases} 
\]
the existence of a time discrete solution to \eqref{Time:dis} can be formulated as follows:
\begin{lem}\label{lem:discrete}
Let $\bu_k \in \HH_{\div}$, $\bo_k \in \HH$, $(\phi_k, \psi_k) \in \VV^2$ and $\rho_k = \rho(\phi_k)$ be given, then under  \eqref{ass:dom}, \eqref{ass:coeff}, \eqref{ass:G}, \eqref{ass:rho:approx} and \eqref{ass:F:approx}, for any $\eps, \sigma \in [0,1]$, $\kappa \in (0,1]$, there exists $(\bu, \bo, \mu, (\phi, \psi))$ with the regularity
\[
\bu \in \UU^\eps_{\div}, \quad \bo \in \UU^\eps, \quad (\phi, \psi) \in \VV^2, \quad \mu \in H^2_n(\Omega),
\]
and solving \eqref{Time:dis}. In addition, the discrete energy inequality holds:
\begin{equation}\label{dis:energy:est}
\begin{aligned}
& E_\kappa(\bu, \bo, \phi, \psi) + \int_\Omega \frac{\rho_k}{2} (|\bu - \bu_k|^2 + |\bo - \bo_k|^2 ) \, dx +\int_\Omega \frac{1}{2} |\nabla (\phi - \phi_k) |^2\, dx  \\
&  \qquad + h \int_\Omega 2 \eta(\phi_k) |\D \bu|^2 + 4 \eta_r(\phi_k)  \big | \frac{1}{2} \curl \bu - \bo \big |^2 + \eps (|\nabla \bu|^q + |\bu|^q) \, dx \\
&  \qquad + h \int_\Omega c_0(\phi_k) |\div \bo|^2 + 2 c_d(\phi_k) |\D \bo|^2 + 2 c_a(\phi_k) |\W \bo|^2 \, dx \\
& \qquad + h \int_\Omega \eps (|\nabla \bo|^q + |\bo|^q) \, dx + h \int_\Gamma |\bu_\tau|^2 + |\bo_\tau|^2 + |\mathcal{L}(\psi)|^2 \, dS \\
& \qquad + \frac{\sigma}{h^2} \int_\Omega | \phi - \phi_k |^2 \, dx + \frac{1}{2} \int_\Gamma \zeta | \psi - \psi_k |^2 + |\nS (\psi - \psi_k)|^2 \, dS \\
& \quad \leq E_\kappa(\bu_k, \bo_k, \phi_k, \psi_k).
\end{aligned}
\end{equation}
\end{lem}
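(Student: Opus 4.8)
The plan is to solve the fully coupled stationary system~\eqref{Time:dis} by a Leray--Schauder fixed point argument, adapting the scheme of \cite{ADG,GGW} and carrying the micro-rotation equation~\eqref{Time:w} alongside the linear pseudo-momentum equation~\eqref{Time:u}. First I fix $\bm w\in\HH_{\div}$ and consider the phase-field subsystem \eqref{Time:phi}--\eqref{Time:mu} and \eqref{Time:psi}--\eqref{Time:L} with $\bu$ replaced by $\bm w$. Eliminating $\mu$ via \eqref{Time:phi}, this system is the Euler--Lagrange system of a strictly convex, coercive and weakly lower semicontinuous functional $J_{\bm w}$ on $\big\{(\phi,\psi)\in\VV^1 : \tfrac{1}{|\Omega|}\int_\Omega\phi\,dx = \tfrac{1}{|\Omega|}\int_\Omega\phi_k\,dx\big\}$, built from the convex part $\tfrac12\|\nabla\phi\|^2 + \tfrac12\|\nS\psi\|_\Gamma^2 + \tfrac{\zeta}{2}\|\psi\|_\Gamma^2 + \int_\Omega F_{0,\kappa}(\phi)\,dx + \int_\Gamma G_0(\psi)\,dS$ of the bulk-plus-surface Ginzburg--Landau energy, the implicit-Euler penalisations $\tfrac1{2h}\|\phi-\phi_k\|_{*,k}^2$, $\tfrac{\sigma}{2h}\|\phi-\phi_k\|^2$ and $\tfrac1{2h}\|\psi-\psi_k\|_\Gamma^2$ (here $\|\cdot\|_{*,k}$ is the negative Sobolev norm dual to $-\div(m(\phi_k)\nabla\,\cdot\,)$ with homogeneous Neumann data), and affine terms carrying the frozen concave contributions $F_1'(\phi_k),\,G_1'(\psi_k)$ together with the transport terms $(\bm w\cdot\nabla\phi_k,\cdot)_\Omega$ and $(\bm w_\tau\cdot\nS\psi_k,\cdot)_\Gamma$. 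Strict convexity gives a unique minimiser, and bulk--surface elliptic regularity (as in \cite[Sec.~4]{GGW}) upgrades it to $(\phi,\psi)\in\VV^2$, $\mu\in H^2_n(\Omega)$, with norms controlled by $\|\bm w\|_{\HH^1}$ and the fixed data $\phi_k,\psi_k$; the solution map $\bm w\mapsto(\phi[\bm w],\psi[\bm w],\mu[\bm w])$ is continuous and maps bounded sets to bounded sets.

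Next, given $(\phi,\psi,\mu)=(\phi[\bm w],\psi[\bm w],\mu[\bm w])$ --- hence $\rho=\rho(\phi)$, $\bm J$ from \eqref{disc:J}, and the forcings $\mu\nabla\phi_k$, $\mathcal{L}(\psi)\nS\psi_k$, $\tfrac12[\div\bm J-\tfrac1h(\rho-\rho_k)]$ all determined --- I solve \eqref{Time:u}--\eqref{Time:w} for $(\bu,\bo)$, but with the convection written semi-implicitly as $(\div(\rho_k\bm w\otimes\bu),\bm v)_\Omega$, $(\div(\rho_k\bm w\otimes\bo),\bm z)_\Omega$ and with the $\bu\cdot\nabla\rho_k$ inside the correction terms frozen to $\bm w\cdot\nabla\rho_k$. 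The resulting operator $\mathcal A_{\bm w}:\UU^\eps_{\div}\times\UU^\eps\to(\UU^\eps_{\div}\times\UU^\eps)^{*}$ has a strictly monotone, coercive principal part --- the $\eps$-weighted full $q$-Laplacian plus $|\cdot|^{q-2}(\cdot)$ terms, together with the viscous Stokes-type forms $(2\eta(\phi_k)\D\bu,\D\bm v)$, $(2\eta_r(\phi_k)\W\bu,\W\bm v)$, $(c_0(\phi_k)\div\bo,\div\bm z)$, $(2c_d(\phi_k)\D\bo,\D\bm z)$, $(2c_a(\phi_k)\W\bo,\W\bm z)$ and the boundary forms $(\bu_\tau,\bm v_\tau)_\Gamma$, $(\bo_\tau,\bm z_\tau)_\Gamma$ --- perturbed by lower-order terms which, thanks to $q>2d$ and the compact embedding $\WW^{1,q}\Subset C(\overline\Omega)$ (for $\eps=0$: the embedding $\HH^1\Subset L^4$ together with a Korn-type coercivity $\|\D\bu\|^2+\|\bu_\tau\|_\Gamma^2\gtrsim\|\bu\|_{\HH^1}^2$ and the lower bound $c_d,c_a\geq k_0>0$ for $\bo$), are strongly continuous; hence $\mathcal A_{\bm w}$ is pseudo-monotone. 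Coercivity is clear for $\eps>0$ since the $q$-growth terms dominate the at most cubic convection/transport contributions; for $\eps=0$ the convection is, by the identity recalled in Remark~\ref{rem:R:J}, exactly cancelled by the frozen correction term upon testing with $(\bu,\bo)$, leaving the coercive viscous part. The surjectivity theorem for pseudo-monotone coercive operators then yields a solution, and strict monotonicity of the principal part gives uniqueness, so $\bm w\mapsto(\bu,\bo)=:\mathcal F(\bm w)$ is single-valued, continuous, and compact as a self-map of $\HH_{\div}$.

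For $\lambda\in[0,1]$ let $\mathcal F_\lambda$ be obtained by replacing $\bm w$ with $\lambda\bm w$ in the two steps above, so that $\mathcal F_0$ is constant and $\mathcal F_1=\mathcal F$; each $\mathcal F_\lambda$ is compact and continuous and $(\lambda,\bm w)\mapsto\mathcal F_\lambda(\bm w)$ is continuous. A fixed point $\bm w=\mathcal F_\lambda(\bm w)$ solves the associated discrete system, and testing \eqref{Time:u} with $\bm v=\bu$, \eqref{Time:w} with $\bm z=\bo$, \eqref{Time:phi} with $\mu$, \eqref{Time:mu} with $\phi-\phi_k$, \eqref{Time:psi} with $\mathcal{L}(\psi)$ and \eqref{Time:L} with $\psi-\psi_k$, then summing, produces \eqref{dis:energy:est} with the $\lambda$-independent right-hand side $E_\kappa(\bu_k,\bo_k,\phi_k,\psi_k)$: here one uses the identity recalled in Remark~\ref{rem:R:J} (responsible for the appearance of $\tfrac1{2h}\rho_k|\bu-\bu_k|^2$, $\tfrac1{2h}\rho_k|\bo-\bo_k|^2$ and for the cancellation of the $\div\bm J$ contributions against $((\bm J\cdot\nabla)\bu,\bu)$ and $((\bm J\cdot\nabla)\bo,\bo)$), the cancellations $(\mu\nabla\phi_k,\bu)\leftrightarrow(\bu\cdot\nabla\phi_k,\mu)$, $(\mathcal{L}(\psi)\nS\psi_k,\bu_\tau)_\Gamma\leftrightarrow(\bu_\tau\cdot\nS\psi_k,\mathcal{L}(\psi))_\Gamma$ and $(\pdnu\phi,\psi-\psi_k)_\Gamma$ against the boundary term produced by $(-\Delta\phi,\phi-\phi_k)$, the algebraic rearrangement $2\eta_r(\phi_k)|\W\bu|^2-2(2\eta_r(\phi_k)\bo,\curl\bu)+4\eta_r(\phi_k)|\bo|^2=4\eta_r(\phi_k)|\tfrac12\curl\bu-\bo|^2$, and the convex--concave splitting inequalities of Remark~\ref{rem:cc:split} for $F_{0,\kappa}'(\phi)+F_1'(\phi_k)$ and $G_0'(\psi)+G_1'(\psi_k)$. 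This yields a bound on all fixed points of $\mathcal F_\lambda$ uniform in $\lambda$, so the Leray--Schauder principle furnishes $\bm w=\mathcal F(\bm w)$, i.e.\ a solution $(\bu,\bo,\mu,(\phi,\psi))$ of \eqref{Time:dis} with the stated regularity, and the same testing with $\lambda=1$ is exactly \eqref{dis:energy:est}.

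\emph{Main obstacle.} Since \eqref{Time:w} is of the same elliptic type as \eqref{Time:u}, carrying the micro-rotation along is largely bookkeeping and the scheme closely parallels \cite{GGW}. The delicate point is that the discrete energy identity must close \emph{simultaneously} with coercivity of the frozen momentum operator: this dictates both the precise form of the correction terms $\tfrac12[\div\bm J-\tfrac1h(\rho-\rho_k)-\bu\cdot\nabla\rho_k]\bu$ (and its $\bo$-analogue) in \eqref{Time:u}--\eqref{Time:w} and the semi-implicit treatment of the convection, and it requires testing the \emph{entire} coupled system at once while tracking the many cancellations between the bulk momentum, micropolar, Cahn--Hilliard and Allen--Cahn surface equations. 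The remaining inputs --- pseudo-monotonicity/compactness of the lower-order terms (where $q>2d$, and the Korn-type inequality for $\eps=0$, are used) and the bulk--surface elliptic regularity bootstrap producing $(\phi,\psi)\in\VV^2$, $\mu\in H^2_n(\Omega)$ --- are standard.
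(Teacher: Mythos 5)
Your overall architecture (a decoupled two-step solve — convex minimisation for the phase-field/Wentzell block, then a frozen-coefficient monotone solve for $(\bu,\bo)$ — followed by a Leray--Schauder fixed point in the velocity alone) differs from the paper, which works monolithically with $\mathcal{K}_k=\mathcal{F}_k\circ\mathcal{M}_k^{-1}$ on the full quintuple and scales the \emph{entire} lower-order right-hand side by $\lambda$. That difference is where your argument has a genuine gap: in your homotopy $\mathcal{F}_\lambda$, the parameter $\lambda$ multiplies only the occurrences of the input velocity $\bm w$ (the transport terms $\bm w\cdot\nabla\phi_k$, $\bm w_\tau\cdot\nS\psi_k$, the semi-implicit convection and the frozen $\bm w\cdot\nabla\rho_k$), but \emph{not} the forcing terms $\mu\nabla\phi_k$ and $\mathcal{L}(\psi)\nS\psi_k$ in \eqref{Time:u}. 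Hence at a fixed point of $\mathcal{F}_\lambda$ with $\lambda\in(0,1)$ the cancellations you invoke, $(\mu\nabla\phi_k,\bu)_\Omega\leftrightarrow\lambda(\bu\cdot\nabla\phi_k,\mu)_\Omega$ and $(\mathcal{L}(\psi)\nS\psi_k,\bu_\tau)_\Gamma\leftrightarrow\lambda(\bu_\tau\cdot\nS\psi_k,\mathcal{L}(\psi))_\Gamma$, do \emph{not} occur; they leave residuals weighted by $(1-\lambda)$. These residuals are quadratic in the unknowns with constants depending on $\|\nabla\phi_k\|$, $\|\nS\psi_k\|$ (i.e.\ on $C_k$, not on the dissipation constants), so they cannot simply be absorbed into the dissipation terms on the left; moreover the bulk residual requires control of $\|\mu\|_{L^2(\Omega)}$, whereas the energy identity only yields $\|\nabla\mu\|_{L^2(\Omega)}$. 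Controlling the mean of $\mu$ uniformly in $\lambda$ is exactly the delicate point the paper handles by inserting $\int_\Omega\mu\,dx$ into the principal operator $\mathcal{M}_k$ (producing the $(1-\lambda)\,|\int_\Omega\mu\,dx|^2$ term in \eqref{LS:est}) and running the two-case argument $\lambda<\tfrac12$ versus $\lambda\ge\tfrac12$ via the equations \eqref{LS:mu}, \eqref{LS:psi}; your proposal contains no counterpart of this, and your claim that testing yields \eqref{dis:energy:est} ``with the $\lambda$-independent right-hand side'' is therefore unjustified for $\lambda<1$ (it is correct only at $\lambda=1$, which is not enough for Leray--Schauder).

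The fix is not cosmetic but is available: either scale the coupling forcings $\mu\nabla\phi_k$ and $\mathcal{L}(\psi)\nS\psi_k$ (and the other lower-order terms) by the \emph{same} $\lambda$ as the transport terms, as in the paper's system \eqref{LS:cal}, so that the cancellations survive for every $\lambda$ and the estimate closes modulo the mean-of-$\mu$ issue, which must then still be addressed as in the paper (growth of $F_{0,\kappa}'$ and $G'$, the $\sqrt{\lambda}\,\|\mathcal{L}(\psi)\|_{L^2(\Gamma)}$ bound, and the Wentzell/elliptic regularity bootstrap to control $\|\mu\|_{H^2}$ and $\|(\phi,\psi)\|_{\VV^2}$); or keep your homotopy but actually estimate the $(1-\lambda)$ residuals, which again forces you to bound $\int_\Omega\mu\,dx$ and $\|\mathcal{L}(\psi)\|_{L^2(\Gamma)}$ in a self-consistent absorption argument rather than by the asserted cancellation. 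The remaining ingredients of your proposal (strict convexity and bulk--surface regularity in Step 1, skew-symmetry of the frozen convection plus correction terms, monotonicity/coercivity of the $(\bu,\bo)$ operator including the case $\eps=0$ via a Korn-type inequality with the boundary terms, and the algebra giving $4\eta_r|\tfrac12\curl\bu-\bo|^2$) are sound and consistent with the paper's computation.
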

Note that the solvability of \eqref{Time:dis} also holds for the limiting case $\sigma = \eps = 0$.

\begin{proof}
In order to utilize the Leray--Schauder principle to deduce the existence of a solution to the discrete problem \eqref{Time:dis} we define function spaces
\[
X := (\UU^\eps_{\div} \times \UU^\eps) \times H^2_n(\Omega) \times \VV^2 , \quad Y := ((\UU^\eps_{\div})^* \times (\UU^\eps)^*) \times L^2(\Omega)  \times \VV^0,
\]
and nonlinear operators $\mathcal{M}_k, \mathcal{F}_k : X \to Y$ as
\begin{align*}
\mathcal{M}_k(\bm{p}) = \begin{pmatrix} L_{k}(\bu,\bo) \\[1ex]  - \div (m(\phi_k) \nabla \mu) + \int_\Omega \mu \, dx \\[1ex] A_W(\phi, \psi) \end{pmatrix}, \quad \mathcal{F}_k(\bm{p}) = \begin{pmatrix} 
S_{\Omega} + S_{\Gamma} \\[1ex]
- \tfrac{1}{h}(\phi - \phi_k) - \bu \cdot \nabla \phi_k + \int_\Omega \mu \, dx \\[1ex]
B_k
\end{pmatrix},
\end{align*}
for $\bm{p} = (\bu, \bo, \mu, (\phi, \psi)) \in X$, where
\begin{align*}
\inn{L_{k}(\bu, \bo)}{(\bm{v}, \bm{z})} & := \int_\Omega \eps |\nabla \bu |^{q-2} \nabla \bu : \nabla \bm{v} + \eps |\bu|^{q-2} \bu \cdot \bm{v} + 2 \eta(\phi_k) \D \bu : \D \bm{v} \, dx \\
&  \quad + \int_\Omega   2 \eta_r(\phi_k) \W \bu : \W \bm{v}  - 2 \eta_r(\phi_k) \bo \cdot \curl \bm{v} \, dx + \int_\Gamma \bu_\tau \cdot \bm{v}_\tau \, dS \\
& \quad + \int_\Omega \eps |\nabla \bo |^{q-2} \nabla \bo : \nabla \bm{z} + \eps |\bo|^{q-2} \bo \cdot \bm{z} +  c_0(\phi_k) \div \bo \div \bm{z} \, dx \\
& \quad  +\int_\Omega  2 c_d(\phi_k) \D \bo : \D \bm{z} + 2 c_a(\phi_k) \W \bo: \W \bm{z} \, dx + \int_\Gamma \bo_\tau \cdot \bm{z}_\tau \, dS, \\
& \quad + \int_\Omega 4 \eta_r(\phi_k) \bo \cdot \bm{z} - 2 \eta_r(\phi_k) \curl \bu \cdot \bm{z} \, dx \\
\inn{S_{\Omega}}{\bm{v}} & := \int_\Omega \Big ( - \frac{1}{h}(\rho \bu - \rho_k \bu_k) - \div (\rho_k \bu \otimes \bu) + \mu \nabla \phi_k - (\bm{J} \cdot \nabla) \bu \Big ) \cdot \bm{v} \, dx \\
& \quad - \int_\Omega \frac{1}{2} \Big ( \div \bm{J} - \frac{1}{h}(\rho - \rho_k) - \bu \cdot \nabla \rho_k \Big ) \bu \cdot \bm{v} \, dx, \\
& \quad + \int_\Omega \Big ( - \frac{1}{h}(\rho \bo - \rho_k \bo_k) - \div (\rho_k \bu \otimes \bo) - (\bm{J} \cdot \nabla) \bo \Big ) \cdot \bm{z} \, dx \\
&\quad  - \int_\Omega \frac{1}{2} \Big ( \div \bm{J} - \frac{1}{h}(\rho - \rho_k) - \bu \cdot \nabla \rho_k \Big ) \bo \cdot \bm{z} \, dx, \\
\inn{S_{\Gamma}}{\bm{v}} & := \int_\Gamma \mathcal{L}(\psi) \nS \psi_k \cdot \bm{v}_\tau \, dS,
\end{align*}
for $\bm{v} \in \UU^\eps_{\div}$ and $\bm{z} \in \UU^\eps$, and $A_W: \VV^2 \to \VV^0$ is the Wentzell Laplacian:
\begin{align*}
A_W(\phi, \psi) := \begin{pmatrix} - \Delta \phi \\[1ex] - \Delta_\Gamma \psi + \pdnu \phi + \zeta \psi 
\end{pmatrix},
\end{align*}
along with 
\begin{align*}
B = \begin{pmatrix} 
\mu - F_{0,\kappa}'(\phi) - F_1'(\phi_k) - \frac{\sigma}{h} (\phi - \phi_k) \\[1ex]
\mathcal{L}(\psi) - G_0'(\psi) - G_1'(\phi_k)
\end{pmatrix}.
\end{align*}
In particular, $\bm{p} = (\bu, \bo, \mu, (\phi, \psi))$ is a weak solution to the time discrete problem \eqref{Time:dis} if and only if $\mathcal{M}_k(\bm{p}) = \mathcal{F}_k(\bm{p})$. Then, on account of the relation
\begin{align*}
|\curl \bu |^2 & = \eps_{ijk} \pd_j u_k \eps_{ilm} \pd_l u_m = (\delta_{jl} \delta_{km} - \delta_{jm} \delta_{kl}) \pd_j u_k \pd_l u_m \\
& = \pd_j u_k \pd_j u_k - \pd_j u_k \pd_k u_j = (\nabla \bu -( \nabla \bu)^{\top}) : \nabla \bu = 2 \W \bu : \nabla \bu = 2 \W \bu : \W \bu,
\end{align*}
we first observe that
\begin{align*}
\inn{L_{k}(\bu, \bo)}{(\bu, \bo)} & = \int_\Omega \eps \big ( |\nabla \bu|^q + |\bu|^q + |\nabla \bo|^q + |\bo|^q \big ) + 2 \eta(\phi_k) |\D \bu|^2 \, dx \\
& \quad + \int_\Omega c_0(\phi_k) |\div \bo|^2 + 2 c_d(\phi_k) |\D \bo|^2 + 2 c_a(\phi_k) |\W \bo|^2 \, dx  \\
& \quad + \int_\Omega 4 \eta_r(\phi_k) \Big | \frac{1}{2} \curl \bu - \bo \Big|^2  \, dx + \int_\Gamma |\bu_\tau|^2 + |\bo_\tau|^2 \, dS \\
& \geq c \Big (\| \bu \|_{\HH^1}^2 + \| \bo \|_{\HH^1}^2 + \eps \| \bu \|_{W^{1,q}}^q + \eps \| \bo \|_{W^{1,q}}^{q} \Big ),
\end{align*}
and hence $L_{k}$ is coercive:
\[
\lim_{\|(\bu, \bo)\|_{\UU^\eps \times \UU^\eps} \to \infty} \frac{\inn{L_{k}(\bu,\bo)}{(\bu,\bo)}}{\|(\bu, \bo)\|_{\UU^\eps \times \UU^\eps}} = \infty.
\]
Furthermore, it holds that
\begin{align*}
\inn{L_{k}(\bu,\bo) - L_{k}(\bm{v}, \bm{z})}{(\bu - \bm{v}, \bo - \bm{z})} \geq 0
\end{align*}
for all $\bu, \bm{v} \in \UU^\eps_{\div}$ and $\bo, \bm{z} \in \UU^\eps$, with equality achieved if and only if $\bu = \bm{v}$ and $\bo = \bm{z}$. This shows $L_{k}$ is a strictly monotone operator. Then, with analogous arguments in \cite[Proof of Lemma 3.3]{GGW} we deduce that the operator $\mathcal{M}_k : X \to Y$ is invertible with a continuous inverse $\mathcal{M}_k^{-1} : Y \to X$.  By restricting to the product Banach space
\[
\tilde{Y} := (\HH^{\frac{3}{4}}_{\div})^{*} \times (\HH^{\frac{3}{4}})^* \times W^{1,\frac{3}{2}}(\Omega) \times \Big ( W^{\frac{1}{2},2}(\Omega) \times W^{\frac{1}{4},2}(\Gamma) \Big ),
\]
and employing the compact embeddings $\tilde{Y} \Subset Y$, the restriction $\mathcal{M}_k^{-1} : \tilde{Y} \subset Y \to X$ is a compact operator. Next, we show that $\mathcal{F}_k : X \to \tilde{Y}$ is continuous and maps bounded sets into bounded sets. Our current setting differs from that of \cite{GGW} where there are new contributions in $S_{\Omega}$ involving $\bo$, but as the structure of these new contributions are similar to those for $\bu$, we may argue similarly as in \cite{GGW} to deduce the continuity of $\mathcal{F}_k : X \to \tilde{Y}$.

We define the mapping 
\[
\mathcal{K}_k := \mathcal{F}_k \circ \mathcal{M}_k^{-1} : \tilde{Y} \to \tilde{Y},
\]
which is a compact operator due to the compactness of $\mathcal{M}_k^{-1}$ and the continuity of $\mathcal{F}_k$. Then, $\bm{p}$ is a solution to the discrete system \eqref{Time:dis}, i.e., $\mathcal{M}_k(\bm{p}) = \mathcal{F}_k(\bm{p})$, if and only if $\bm{f} = \mathcal{M}_k(\bm{p})$ is a fixed point of $\mathcal{K}_k$, i.e., $\bm{f} = \mathcal{K}_{k}(\bm{f})$. Then, to apply the Leray--Schauder principle we have to show there exists $R > 0$ such that if $\bm{f} \in \tilde{Y}$ and $0 \leq \lambda \leq 1$ satisfies $\bm{f} = \lambda \mathcal{K}_k(\bm{f})$, then $\| \bm{f} \|_{\tilde{Y}} \leq R$. In particular, $\bm{f} = \lambda \mathcal{K}_k(\bm{f})$ is equivalent to $\mathcal{M}_k(\bm{p}) = \lambda \mathcal{F}_k(\bm{p})$, which reads as
\begin{subequations}\label{LS:cal}
\begin{alignat}{2}
\label{LS:u} &(\eps |\nabla \bu|^{q-2} \nabla \bu, \nabla \bm{v})_\Omega + (\eps |\bu|^{q-2} \bu, \bm{v})_\Omega +  (2 \eta(\phi_k) \D \bu, \D \bm{v})_\Omega \\
\notag & \qquad  + (2 \eta_r(\phi_k) \W \bu, \W \bm{v})_\Omega  - (2 \eta_r(\phi_k) \bu, \curl \bm{v})_\Omega + (\bu_{\tau}, \bm{v}_\tau)_{\Gamma}  \\
\notag & \quad  = -\tfrac{\lambda}{h}(\rho \bu - \rho_k \bu_k, \bm{v})_\Omega -\lambda (\div (\rho_k \bu \otimes \bu), \bm{v})_\Omega  +\lambda(\mu \nabla \phi_k, \bm{v})_\Omega   \\
\notag & \qquad - \lambda (( \bm{J} \cdot \nabla) \bu, \bm{v})_\Omega  - \tfrac{\lambda}{2}( [\div \bm{J} - \tfrac{1}{h}(\rho - \rho_k) - \bu \cdot \nabla \rho_k] \bu , \bm{v})_\Omega \\
\notag & \qquad + \lambda (\mathcal{L}(\psi) \nS \psi_k, \bm{v}_\tau)_{\Gamma},  \\[1ex]
\label{LS:w} & (\eps |\nabla \bo|^{q-2} \nabla \bo, \nabla \bm{z})_\Omega + (\eps |\bo|^{q-2} \bo, \bm{z})_\Omega \\
\notag & \qquad + ( 2 \eta_r(\phi_k)(2 \bo - \curl \bu), \bm{z})_\Omega  + (\bo_\tau, \bm{z}_\tau)_\Gamma \\
\notag & \qquad + (c_0(\phi_k) \div \bo, \div \bm{z})_\Omega + (2 c_d(\phi_k) \D \bo, \D \bm{z})_\Omega + (2 c_a(\phi_k) \W \bo, \W \bm{z})_\Omega \\
\notag & \quad = -\tfrac{\lambda}{h}(\rho \bo - \rho_k \bo_k, \bm{z})_\Omega -\lambda (\div (\rho_k \bu \otimes \bo), \bm{z})_\Omega   \\
\notag & \qquad -\lambda ( (\bm{J} \cdot \nabla) \bo, \bm{z})_\Omega- \tfrac{\lambda}{2}( [\div \bm{J} - \tfrac{1}{h}(\rho - \rho_k) - \bu \cdot \nabla \rho_k] \bo , \bm{z})_\Omega, \\[1ex]
\label{LS:phi}  & - \div (m(\phi_k) \nabla \mu) + \int_\Omega \mu \, dx\\
\notag & \quad = -\tfrac{\lambda}{h}(\phi - \phi_k) -\lambda \bu \cdot \nabla \phi_k + \lambda \int_\Omega \mu \, dx \quad \text{ a.e.~in } \Omega, \\[1ex]
\label{LS:mu} & - \Delta \phi  = \lambda \mu  - \lambda F_{0,\kappa}'(\phi) -  \lambda F_1'(\phi_k) - \lambda \tfrac{\sigma}{h} (\phi - \phi_k) \quad \text{ a.e.~in } \Omega, \\[1ex]
\label{LS:psi} &  -\Delta_\Gamma \psi + \pdnu \phi + \zeta \psi  =  \lambda \mathcal{L}(\psi) - \lambda G_0'(\psi) - \lambda G_1'(\psi_k) \quad \text{ a.e.~on } \Gamma, 
\end{alignat}
\end{subequations}
along with (recalling \eqref{Time:psi})
\[
\mathcal{L}(\psi) = - \frac{1}{h}(\psi - \psi_k) - \bu_\tau \cdot \nS \psi_k \quad \text{ a.e.~on } \Gamma.
\]
We now derive uniform estimates in $\lambda \in [0,1]$ where the calculation also provides the discrete energy estimate \eqref{dis:energy:est}. Choosing $\bm{v} = \bu$ in \eqref{LS:u}, $\bm{z} = \bo$ in \eqref{LS:w}, testing \eqref{LS:phi} with $\mu$, \eqref{LS:mu} with $\frac{1}{h}(\phi - \phi_k)$, \eqref{LS:psi} with $\frac{1}{h}(\psi - \psi_k)$, as well as using the identities (see e.g. \cite[Lemma 4.3]{ADG})
\begin{align*}
( \tfrac{1}{2} (\div \bm{J}) \bm{q} +( \bm{J} \cdot \nabla) \bm{q}, \bu)_\Omega = \int_\Omega \tfrac{1}{2} \div ( |\bm{q}|^2 \bm{J}) \, dx = 0, \\
(\div (\rho_k \bu \otimes \bm{q}) - \tfrac{1}{2} (\bu \cdot \nabla \rho_k) \bm{q}, \bm{q})_\Omega = \int_\Omega  \tfrac{1}{2} \div (\rho_k |\bm{q}|^2 \bu) \, dx = 0,
\end{align*}
and
\begin{align*}
\frac{\rho \bm{q} - \rho_k \bm{q}_k}{h} \cdot \bm{q} = \frac{1}{2h} ( \rho |\bm{q}|^2 - \rho_k |\bm{q}_k|^2) + \frac{(\rho - \rho_k) |\bm{q}|^2}{2h} + \frac{\rho_k |\bm{q} - \bm{q}_k|^2}{2h},
\end{align*}
for $\bm{q} \in \{ \bu, \bo\}$, we then obtain upon summing
\begin{align}\label{LS:est}
\notag &  \int_\Omega \eps \Big ( |\nabla \bu|^q + |\bu|^q + |\nabla \bo|^q + |\bo|^q  \Big ) + 2 \eta(\phi_k) |\D \bu|^2 + 4 \eta_r(\phi_k) |\tfrac{1}{2} \curl \bu - \bo|^2 \, dx \\
\notag & \qquad + \int_\Omega c_0(\phi_k) |\div \bo|^2 + 2 c_d(\phi_k) |\D \bo|^2 + 2 c_a(\phi_k) |\W \bo|^2 + m(\phi_k) |\nabla \mu|^2  \, dx \\
\notag & \qquad + (1-\lambda) \Big | \int_\Omega \mu \, dx \Big |^2 + \int_\Gamma |\bu_\tau|^2 + |\bo_\tau|^2 \, dS + \lambda \int_\Omega \frac{\rho_k}{2h} \Big (|\bu - \bu_k|^2 +|\bo - \bo_k|^2 \Big )\, dx   \\
\notag & \qquad + \lambda \int_\Omega \frac{\rho |\bu|^2 - \rho_k |\bu_k|^2}{2h} + \rho_k \frac{|\bu - \bu_k|^2}{2h} + \frac{\rho |\bo|^2 - \rho_k |\bo_k|^2}{2h} + \rho_k \frac{|\bo - \bo_k|^2}{2h} \, dx \\ 
\notag & \qquad + \int_\Omega \frac{1}{2h} \big ( |\nabla \phi|^2 - |\nabla \phi_k|^2 + |\nabla (\phi - \phi_k)|^2 \big) + \frac{\sigma}{h^2} |\phi - \phi_k|^2 \, dx + \lambda \int_\Gamma |\mathcal{L}(\psi)|^2 \, dS \\
\notag & \qquad + \int_\Gamma \frac{1}{2h} \big ( |\nS \psi|^2 + \zeta |\psi|^2 - |\nS \psi_k|^2 - \zeta |\psi_k|^2 + |\nS(\phi - \psi_k)|^2 + \zeta |\psi - \psi_k|^2 \big ) \, dS \\
\notag & \quad = - \lambda \int_\Omega \tfrac{1}{h} (F_{0,\kappa}'(\phi) + F_1'(\phi_k) (\phi- \phi_k) \, dx - \lambda \int_\Gamma \tfrac{1}{h}(G_0'(\psi) + G_1'(\psi_k))(\psi - \psi_k) \, dS \\
 & \quad \leq - \lambda \int_\Omega F_\kappa(\phi) - F_\kappa(\phi_k) \, dx - \lambda \int_\Gamma G(\psi) - G(\psi_k) \, dS,
\end{align}
where we have used Remark \ref{rem:cc:split} for the inequality. Note that the discrete energy inequality \eqref{dis:energy:est} is obtained for $\lambda = 1$.

From \eqref{LS:est}, by neglecting several non-negative terms that do not contribute to the estimates of $\| \bm{p} \|_X$, we first infer the following estimate after multiplying across by $h$ and utilizing the lower bounds for $\eta$, $\eta_r$, $c_0$, $c_d$, $c_a$ and $m$ (cf.~\eqref{ass:coeff}), as well as for $F_\kappa$ and $G$ (cf.~\eqref{ass:G} and \eqref{ass:F:approx}):
\begin{equation}\label{LS:est:2}
\begin{aligned}
& h \int_\Omega  k_0 \big (2 |\D \bu|^2 + |\nabla \mu|^2 + |\div \bo|^2 +2 |\D \bo|^2  + 2 |\W \bo|^2 \big ) \, dx \\
& \qquad  + h \int_\Omega \eps \big ( |\nabla \bu|^q + |\bu|^q + |\nabla \bo|^q + |\bo|^q \big ) \, dx + (1-\lambda) h \Big | \int_\Omega \mu \, dx \Big |^2  \\
& \qquad + \frac{\lambda \sigma}{h} \int_\Omega |\phi - \phi_k|^2 \, dx  + h \int_\Gamma |\bu_\tau|^2 + |\bo_\tau|^2 + \lambda |\mathcal{L}(\psi)|^2 \, dS \\
& \qquad + \int_\Gamma \frac{1}{2} |\nS \psi|^2 + \frac{\zeta}{2} |\psi|^2  \, dS + \int_\Omega \frac{1}{2} |\nabla \phi|^2 + \frac{\delta}{2} |\phi|^2 \, dx \\
& \quad \leq  \int_\Omega \frac{1}{2} |\nabla \phi_k|^2 + F_\kappa(\phi_k) + \frac{\rho_k}{2} (|\bu_k|^2 + |\bo_k|^2 )\, dx +\int_\Gamma  \frac{1}{2} |\nS \psi_k|^2 + G(\psi_k) \, dS \\
& \qquad + c_G |\Gamma| + c_F |\Omega| \\
& \quad  \leq C_k
\end{aligned}
\end{equation}
where for the right-hand side we used the fact that $\lambda \leq 1$ and the constant $C_k$ depends on $h$ but is independent of $\lambda \in [0,1]$. This implies that
\begin{equation}\label{LS:est:3}
\begin{aligned}
& \| \bu \|_{\UU^\eps} + \| \bo \|_{\UU^\eps} + \sqrt{1-\lambda} \; \Big | \int_\Omega \mu \, dx \Big| \\
& \quad + \sqrt{\lambda} \| \mathcal{L}(\psi) \|_{L^2(\Gamma)} + \| \nabla \mu \| + \| (\phi, \psi) \|_{\VV^1} \leq C_k.
\end{aligned}
\end{equation}
It remains to show that the $H^2$-norm of $\mu$ and the $\VV^2$-norm of $(\phi, \psi)$ can be bounded independently of $\lambda$. Firstly, the $L^2$-norm of $\mu$ can be obtained in the case $\lambda \in [0,\frac{1}{2})$ from $|\int_\Omega \mu \, dx | \leq C_k$ and the Poincar\'e inequality. On the other hand, for $\lambda \in [\frac{1}{2},1]$, we integrate \eqref{LS:mu} over $\Omega$ and use \eqref{LS:psi} to deduce
\begin{align*}
\frac{1}{2} \int_\Omega \mu \, dx \leq \lambda \int_\Omega \mu \, dx & = \lambda \int_\Omega F_{0,\kappa}'(\phi) + F_1'(\phi_k) + \frac{\sigma}{h} (\phi - \phi_k) \, dx \\
& \quad+ \int_\Gamma \zeta \psi - \lambda \mathcal{L}(\psi) + \lambda G_0'(\psi) + \lambda G_1'(\psi_k) \, dS.
\end{align*}
Hence, on account of the growth assumptions for $G$ \eqref{ass:G} and for $F_\kappa$ \eqref{ass:F:approx}, as well as the uniform $\VV^1$-estimate for $(\phi, \psi)$ and the $L^2(\Gamma)$-estimate for $\sqrt{\lambda} \mathcal{L}(\psi)$, we find that
\begin{align*}
\frac{1}{2} \Big | \int_\Omega \mu \, dx \Big | \leq C \Big ( 1 + \| (\phi, \psi) \|_{\VV^1}^{p+1} + \sqrt{\lambda} \| \mathcal{L}(\psi) \|_{L^2(\Gamma)} \Big ) \leq C_k.
\end{align*}
This implies that for any $\lambda \in [0,1]$ we have 
\begin{align}\label{LS:est:4}
\| \bu \|_{\UU^\eps} + \| \bo \|_{\UU^\eps} + \| \mu \|_{H^1} + \| (\phi, \psi) \|_{\VV^1} \leq C_k.
\end{align}
Next, viewing \eqref{LS:mu} as an elliptic equation for $\mu$ with right-hand side $\alpha := - \frac{\lambda}{h}(\phi - \phi_k) - \lambda \bu \cdot \nabla \phi_k + \lambda \int_\Omega \mu \, dx$. From \eqref{LS:est:4} we see that $\alpha$ is uniformly bounded in $L^2(\Omega)$:
\[
\| \alpha \| \leq C_k \big (1 + \| \phi_k \| +  \| \phi \|  + \|\nabla \phi_k \|_{L^4} \| \bu \|_{L^4} \big ) \leq C_k,
\]
and hence with the following elliptic regularity estimate (see \cite{ADG})
\[
\| \mu \|_{H^2} \leq C \Big ( \| \mu \|_{H^1} + \| \alpha \| \Big ),
\]
we deduce that for all $\lambda \in [0,1]$, 
\begin{align}\label{LS:est:5}
\| \bu \|_{\UU^\eps} + \| \bo \|_{\UU^\eps} + \| \mu \|_{H^2} + \| (\phi, \psi) \|_{\VV^1} \leq C_k.
\end{align}
Lastly, we express \eqref{LS:phi} and \eqref{LS:psi} as an elliptic equation for $(\phi, \psi)$ involving the Wentzell Laplacian:
\[
\begin{cases}
- \Delta \phi = h_1 := \lambda (\mu - F_{0,\kappa}'(\phi) - F_1'(\phi_k) - \tfrac{\sigma}{h}(\phi - \phi_k)) & \text{ in } \Omega, \\
- \Delta_\Gamma \psi + \pdnu \phi + \zeta \psi = h_2 := \lambda \mathcal{L}(\psi) - \lambda G_0'(\psi) - \lambda G_1'(\psi_k) & \text{ on } \Gamma,
\end{cases}
\]
then invoking the regularity estimate (see e.g.~\cite[Lemma A.1]{MiranZelik} or \cite[Lemma B.4]{GGW})
\[
\| (\phi, \psi) \|_{\VV^2} \leq C \Big ( \| h_1 \| + \| h_2 \|_{L^2(\Gamma)} \Big ),
\]
we infer that 
\[
\| \bm{p} \|_{X} = \| \bu \|_{\UU^\eps} + \| \bo \|_{\UU^\eps} + \| \mu \|_{H^2} + \| (\phi, \psi) \|_{\VV^2} \leq C_k. 
\]
Using that $\mathcal{F}_k :X \to \tilde{Y}$ maps bounded sets into bounded sets and $\bm{f} = \mathcal{M}_k(\bm{p})$, we see that if $\bm{f} = \lambda \mathcal{K}_k(\bm{f}) = \lambda \mathcal{F}_k(\bm{p})$ then,
\[
\| \bm{f} \|_{\tilde{Y}} = \| \lambda \mathcal{F}_k(\bm{p}) \|_{\tilde{Y}} \leq C_k \big ( \| \bm{p} \|_{X} + 1 \big ) \leq C_k.
\]
Hence, we can apply the Leray--Schauder principle to deduce the existence of a solution $\bm{p}$ to the time discrete system \eqref{Time:dis}.
\end{proof}

\subsection{Proof of Theorem \ref{thm:appmodel}}
In this section we will pass to the limit $N\to \infty$ (equivalently $h \to 0$) to prove Theorem \ref{thm:appmodel}. We begin with initial conditions $(\bu_0, \bo_0, (\phi_0^N, \psi_0^N))$ where $\bu_0 \in \HH_{\div}$ and $\bo_0 \in \HH$, while $(\phi_0^N, \psi_0^N) \in \VV^2$ is a set of regularized initial condition constructed as follows (cf.~\cite[Lemma B.5]{GGW}): we solve the following linear parabolic problem
\begin{align*}
\begin{cases}
\pd_t u - \Delta u = 0 & \text{ in } Q, \\
\pd_t v - \Delta_\Gamma v + \pdnu u + \zeta v = 0 & \text{ on } \Sigma, \\
(u,v) \vert_{t = 0} = (\phi_0, \psi_0) & \text{ in } \Omega \times \Gamma,
\end{cases}
\end{align*}
and set $(\phi_0^N, \psi_0^N) := (u(t), v(t)) \vert_{t = \frac{1}{N}}$. Then, it holds that $(\phi_0^N, \psi_0^N) \in \VV^2$ and $(\phi_0^N, \psi_0^N) \to (\phi_0, \psi_0)$ strongly in $\VV^1$ as $N \to \infty$. Then, from the growth assumption of $F_\kappa$ and $G$, by the generalized dominated convergence theorem we have
\begin{align}\label{initial:FG}
F_\kappa(\phi_0^N) \to F_\kappa(\phi_0) \text{ strongly in } L^1(\Omega), \quad G(\psi_0^N) \to G(\psi_0) \text{ strongly in } L^1(\Gamma).
\end{align}
For $f \in \{ \bu, \bo, \mu, (\phi, \psi)\}$, we now define the piecewise constant interpolation $f^N(t)$ on $[-h, \infty)$ through 
\[
f^N(t) = f_k \quad \text{ for } t \in [(k-1)h, kh),
\]
where $k \in \mathbb{N} \cup \{0\}$, and introduce the notation
\[
f_h := f(t - h) 
\]
so that
\[
f^N(t) = f_{k+1}, \quad f_h^N(t) = f_{k} \quad \text{ for } t \in [kh, (k+1)h).
\]
We define the piecewise linear interpolation $\tilde{f}^N$:
\[
\tilde{f}^N(t) = \frac{t - kh}{h} f_{k+1} + \frac{(k+1)h - t}{h} f_k \quad \text{ for } t \in [kh,(k+1)h),
\]
so that 
\[
\pd_t \tilde{f}^N = \frac{f_{k+1} - f_k}{h} =: \pd_{t,h}^{-} f^N(t) \quad \text{ for } t \in [kh, (k+1)h).
\]
Based these definitions, we use the notations:
\begin{equation}\label{defn:JRN}
\begin{aligned}
\rho^N & := \rho(\phi^N), \quad \bm{J}^N := - \rho'(\phi_h^N) m(\phi_h^N) \nabla \mu^N, \\
R^N & := \pd_{t,h}^{-} \rho^N + \div (\rho_h^N \bu^N + \bm{J}^N).
\end{aligned}
\end{equation}
Then, for arbitrary $\bm{v} \in C^\infty_0(0,T;C^\infty_{\div}(\overline{\Omega}))$ we choose $\tilde{\bm{v}} := \int_{kh}^{(k+1)h} \bm{v} \, dt$ as a test function in \eqref{Time:u}, and likewise for arbitrary $\bm{z} \in C^\infty_0(0,T;C^\infty_*(\overline{\Omega}))$ we choose $\tilde{\bm{z}} := \int_{kh}^{(k+1)h} \bm{z} \, dt$ as a test function in \eqref{Time:w}, and then summing over $k \in \mathbb{N}$ to obtain
\begin{subequations}\label{Limit:u:w}
\begin{alignat}{2}
\label{Time:u:2}
0 & = \int_Q -\rho^N \bu^N \cdot \pd_{t,h}^{+} \bm{v} + \div (\rho_h^N \bu^N \otimes \bu^N) \cdot \bm{v} + 2 \eta(\phi_h^N) \D \bu^N : \D \bm{v} \, dx \, dt \\
\notag & \quad + \int_Q 2 \eta_r(\phi_h^N) \W \bu^N : \W \bm{v} - 2 \eta_r(\phi_h^N) \bo^N \cdot \curl \bm{v} - \mu^N \nabla \phi_h^N \cdot \bm{v}  \, dx \, dt \\
\notag & \quad + \int_Q \eps |\nabla \bu^N|^{q-2} \nabla \bu^N : \nabla \bm{v} + \eps |\bu^N|^{q-2} \bu^N \cdot \bm{v} \, dx \, dt + \int_{\Sigma} \bu^N_\tau \cdot \bm{v}_\tau \, dS \, dt \\
\notag & \quad - \int_Q  (\bm{J}^N \otimes \bu^N) \cdot \nabla \bm{v} + \frac{1}{2} R^N \bu^N \cdot \bm{v} \, dx \, dt  - \mathcal{L}(\psi^N) \nS \psi^N_h \cdot \bm{v}_\tau \, dS \, dt \\
\label{Time:w:2} 0 & = \int_Q -\rho^N \bo^N \cdot \pd_{t,h}^{+} \bm{z} + \div (\rho_h^N \bu^N \otimes \bo^N) \cdot \bm{z} \, dx \, dt + \int_{\Sigma} \bo^N_\tau \cdot \bm{z}_\tau \, dS \, dt \\
\notag & \quad + \int_Q c_0(\phi_h^N) \div \bo^N \cdot \div \bm{z} +2 c_d(\phi_h^N) \D \bo^N : \D \bm{z} + 2 \eta_r(\phi_h^N) \W \bo^N : \W \bm{z} \, dx \, dt \\
\notag & \quad - \int_Q 2 \eta_r (\phi_h^N) (\curl \bu^N - 2 \bo^N) \cdot \bm{z} + (\bm{J}^N \otimes \bo^N) \cdot \nabla \bm{z} + \frac{1}{2} R^N \bo^N \cdot \bm{z} \, dx \, dt \\
\notag & \quad + \int_Q \eps |\nabla \bo^N|^{q-2} \nabla \bo^N : \nabla \bm{z} + \eps |\bo^N|^{q-2} \bo^N \cdot \bm{z} \, dx \, dt,
\end{alignat}
\end{subequations}
where we have used Remark \ref{rem:R:J}, as well as applied a relation akin to an integration by parts on the time derivative:
\begin{align*}
\int_Q \pd_{t,h}^{-} (\rho^N \bu^N) \cdot \bm{v} \, dx \, dt & =  \int_Q (\rho^N\bu^N)(t)  \cdot \frac{(\bm{v}(t) - \bm{v}(t+h))}{h} \, dx \, dt  \\
& =: \int_Q \rho^N \bu^N \cdot \pd_{t,h}^+ \bm{v} \, dx \, dt.
\end{align*}
Analogously, from \eqref{Time:phi}-\eqref{Time:L} we deduce that 
\begin{subequations}\label{Limit:CH}
\begin{alignat}{2}
\label{Time:phi:2} 0 & = \int_Q - \phi^N \pd_{t,h}^+ \xi - \bu^N \phi_h^N \cdot \nabla \xi + m(\phi_h^N) \nabla \mu^N \cdot \nabla \xi \, dx \, dt, \\
\label{Time:psi:2} 0 & = \int_\Sigma -\psi^N \pd_{t,h}^+ \Upsilon + (\bu_\tau^N \cdot \nS \psi_h^N ) \Upsilon + \mathcal{L}(\psi^N) \Upsilon \, dS \, dt, \\
\label{Time:mu:2} \mu^N & = - \Delta \phi^N + F_{0,\kappa}'(\phi^N) + F_1'(\phi_h^N) + \sigma \pd_{t,h}^- \phi^N \quad \text{ a.e.~in } Q, \\
\label{Time:L:2} \mathcal{L}(\psi^N) & = - \Delta_\Gamma \psi^N + \pdnu \phi^N + \zeta \psi^N + G_0'(\psi^N) + G_1'(\psi_h^N) \quad \text{ a.e.~on } \Sigma,
\end{alignat}
\end{subequations} for arbitrary $\xi \in C^\infty_0(0,T;H^1(\Omega))$ and $\Upsilon \in C^\infty_0(0,T;L^2(\Gamma))$. We also define a piecewise linear interpolation of the energy:
\[
\mathcal{E}^N(t) := \frac{(k+1)h - t}{h} E_\kappa(\bu_k, \bo_k, \phi_k, \psi_k) + \frac{t - kh}{h} E_\kappa(\bu_{k+1}, \bo_{k+1}, \phi_{k+1}, \psi_{k+1})
\]
for $t \in [kh, (k+1)h)$, and a piecewise linear interpolation dissipation functional
\begin{align*}
\mathcal{D}^N(t) & := \int_\Omega 2 \eta(\phi_k) |\D \bu_{k+1}|^2 + 4 \eta_r(\phi_k) \Big | \frac{1}{2} \curl \bu_{k+1} - \bo_{k+1} \Big |^2  + m(\phi_k) |\nabla \mu_{k+1}|^2\, dx \\
& \quad + \int_\Omega c_0(\phi_k) |\div \bo_{k+1}|^2 + 2 c_d(\phi_k) | \D \bo_{k+1}|^2 + 2 c_a(\phi_k) |\W \bo_{k+1}|^2 \, dx \\
& \quad + \int_\Omega \eps \big ( |\nabla \bu_{k+1}|^{q} + |\bu_{k+1}|^q + |\nabla \bo_{k+1}|^q + |\bo_{k+1}|^q \big ) + \frac{\sigma}{h^2} |\phi_{k+1} - \phi_k|^2 \, dx\\
& \quad + \int_\Gamma |(\bu_{k+1})_{\tau}|^2 +  |(\bo_{k+1})_{\tau}|^2  + |\mathcal{L}(\psi_{k+1})|^2 \, dS 
\end{align*}
for $t \in (kh, (k+1)h)$, $k \in \mathbb{N} \cup \{0\}$.  Then, the discrete energy inequality \eqref{dis:energy:est} in Lemma \ref{lem:discrete} leads to 
\begin{align}\label{energy:ineq:bdd}
- \frac{d}{dt} \mathcal{E}^N(t) = \frac{E_\eps(\bu_{k}, \bo_k, \phi_k, \psi_k) - E_\eps(\bu_{k+1}, \bo_{k+1}, \phi_{k+1}, \psi_{k+1})}{h} \geq \mathcal{D}^N(t)
\end{align}
for all $t \in (kh, (k+1)h)$, $k \in \mathbb{N} \cup \{0\}$. Integrating \eqref{energy:ineq:bdd} with respect to time yields
\begin{align}\label{energy:ineq:bdd:int}
E_\kappa(\bu^N, \bo^N, \phi^N, \psi^N)(t) + \int_{s}^t \mathcal{D}^N(r) \, dr \leq E_\kappa(\bu^N, \bo^N, \phi^N, \psi^N)(s)
\end{align} 
for all $0 \leq s \leq t < T$ with $s,t \in h( \mathbb{N} \cup \{0\})$. 

Recalling \eqref{initial:FG}, it holds that $E_\kappa(\bu^N, \bo^N, \phi^N, \psi^N)(0)$ can be bounded independent of $N$, and subsequently by analogous arguments in \cite[Section 3.2]{GGW} or \cite[Section 5]{ADG}, we obtain the following compactness assertions:

\begin{lem}\label{lem:compact:N}
Along a non-relabelled subsequence $N \to \infty$, there exist limit functions $(\bu, \bo, \mu, (\phi, \psi))$ satisfying
\begin{itemize}
\item the initial conditions: $\bu(0) = \bu_0$, $\bo(0) = \bo_0$, $(\phi,\psi)(0) = (\phi_0, \psi_0)$,
\item for the order parameters and chemical potential
\begin{align*}
(\phi^N, \psi^N) & \to (\phi, \psi) && \text{ weakly* in } L^\infty(0,T;\VV^1) \cap L^2(0,T;\VV^2), \\
\sigma^{1/2} \pd_{t,h}^- \phi^N & \to \sigma^{1/2} \pd_t \phi && \text{ weakly in } L^2(Q), \\
\mu^N & \to \mu && \text{ weakly in } L^2(0,T;H^1(\Omega)), \\
(F_{0,\kappa}'(\phi^N) , F_1'(\phi_h^N))& \to (F_{0,\kappa}'(\phi) , F_1'(\phi)) && \text{ weakly in } L^2(Q)^2, \\
(G_0'(\psi^N), G_1'(\psi_h^N)) & \to (G_0'(\psi), G_1'(\psi)) && \text{ weakly in } L^2(\Sigma)^2, \\
\mathcal{L}(\psi^N) & \to \mathcal{L}(\psi) && \text{ weakly in } L^2(\Sigma), \\
\bm{J}^N & \to \bm{J} = - \rho'(\phi) m(\phi) \nabla \mu && \text{ weakly in } L^2(0,T;L^2(\Omega;\R^d)), 
\end{align*}
\item for $\bm{y} \in \{\bu, \bo\}$
\begin{align*}
\bm{y}^N & \to \bm{y} && \text{ weakly* in } L^\infty(0,T;\HH) \cap L^2(0,T;\HH^1), \\
\bu_i^N \bm{y}_j^N & \to \bu_i \bm{y}_j && \text{ strongly in } L^k(0,T;L^2(\Omega)) \text{ for } k \in [1,4/3), \, 1 \leq i,j \leq d, \\
\eps^{1/q} \bm{y}^N & \to \eps^{1/q} \bm{y} && \text{ weakly in } L^q(0,T;\WW^{1,q}) \text{ for } q > 2d, \\
\rho_h^N \bm{y}^N & \to \rho(\phi) \bm{y} && \text{ strongly in } L^2(0,T;\HH), 
\end{align*}
and for arbitrary $\bm{s} \in L^{\infty}(0,T;\WW^{1,4}(\Omega) \cap \HH^{\frac{3}{2}+\delta})$, $\delta > 0$ and $q \geq 12$,
\[
\int_0^T (R^N \bm{y}^N, \bm{s})_\Omega \, dt \to \int_0^T (R \bm{y}, \bm{s})_\Omega \, dt 
\]
with $R = - m(\phi) \nabla \mu \cdot \nabla \rho'(\phi)$,
\end{itemize}
and the quintuple of limit functions $(\bu, \bo, \mu, (\phi, \psi))$ that is a weak solution to \eqref{app:model1}-\eqref{app:model1:bc} in the sense of Definition \ref{defn:app:solution}.
\end{lem}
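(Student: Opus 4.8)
The plan is to follow the compactness strategy of \cite{GGW,ADG}: derive uniform-in-$N$ bounds from the discrete energy inequality \eqref{energy:ineq:bdd:int} together with dual estimates for the discrete time derivatives, extract a convergent subsequence, and pass to the limit $N\to\infty$ using weak compactness for the linear terms and an Aubin--Lions--Simon type compactness for the nonlinear ones. The two deviations from \cite{GGW} are the micro-rotation equation for $\bo^N$ -- which is structurally parallel to the $\bu^N$-equation and needs no new idea -- and the identification of the remainder $R$, which is the genuinely delicate point.

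\emph{A priori bounds.} From \eqref{energy:ineq:bdd:int}, the coercivity of $E_\kappa$ via \eqref{ass:F:approx} and \eqref{ass:G}, the lower bounds \eqref{ass:coeff}, and the Wentzell elliptic regularity estimate applied to \eqref{Time:mu:2}--\eqref{Time:L:2}, one obtains uniformly in $N$: $\bu^N,\bo^N$ bounded in $L^\infty(0,T;\HH)\cap L^2(0,T;\HH^1)\cap L^q(0,T;\WW^{1,q})$; $(\phi^N,\psi^N)$ bounded in $L^\infty(0,T;\VV^1)\cap L^2(0,T;\VV^2)$; $\sigma^{1/2}\pd_{t,h}^-\phi^N$ bounded in $L^2(Q)$; $\mathcal L(\psi^N)$ bounded in $L^2(\Sigma)$; and $\nabla\mu^N$ bounded in $L^2(Q)$, upgraded to a full $L^2(0,T;H^1(\Omega))$-bound by integrating \eqref{Time:mu:2} to control the spatial mean of $\mu^N$ using the growth of $F_\kappa$ and $G$. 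Testing \eqref{Time:u}, \eqref{Time:w} against $\UU^\eps_{\div}$, $\UU^\eps$ and \eqref{Time:phi}, \eqref{Time:psi} against $H^1(\Omega)$, $L^2(\Gamma)$ controls $\pd_{t,h}^-(\rho^N\bu^N),\pd_{t,h}^-(\rho^N\bo^N)$ in $L^{q'}(0,T;(\WW^{1,q})^*)$ and $\pd_{t,h}^-\phi^N,\pd_{t,h}^-\psi^N$ in $L^2(0,T;(H^1)^*)$ and $L^2(\Sigma)$; in particular $\|\phi^N-\phi_h^N\|_{L^2(Q)}\to 0$, so all time-shifted quantities share the limits of their unshifted counterparts.

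\emph{Compactness and limit passage.} The discrete Aubin--Lions--Simon lemma (as in \cite{ADG}) gives $(\phi^N,\psi^N)\to(\phi,\psi)$ strongly in $C([0,T];L^2(\Omega)\times L^2(\Gamma))\cap L^2(0,T;\VV^{2-\delta})$, hence $\phi^N,\nabla\phi^N$ converge a.e.\ and in $L^2$, and all compositions $m(\phi_h^N),\eta(\phi_h^N),\dots,\rho'(\phi_h^N),\rho''(\phi_h^N),F_1'(\phi_h^N),G_1'(\psi_h^N)$ converge strongly by boundedness plus a.e.\ convergence; also $\rho^N\bu^N\to\rho(\phi)\bu$, $\rho^N\bo^N\to\rho(\phi)\bo$ strongly in $L^2(0,T;\HH)$, and since $\rho\ge\rho_0>0$ this forces $\bu^N\to\bu$, $\bo^N\to\bo$ strongly in $L^2(Q)$, so the convective terms pass to the limit (with $\bu_i^N\bu_j^N$, $\bu_i^N\bo_j^N$ converging strongly in $L^k(0,T;L^2)$, $k<4/3$, after interpolation with the $L^2(0,T;\HH^1)$-bound). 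Linear and weak--strong product terms ($\eta(\phi_h^N)\D\bu^N:\D\bm{v}$, $\mu^N\nabla\phi_h^N\cdot\bm{v}$, $\mathcal L(\psi^N)\nS\psi_h^N\cdot\bm{v}_\tau$, $\bm{J}^N\otimes\bu^N$, the $\eta_r$ coupling, etc.) then converge directly. For the $q$-Laplacian contributions a Minty--Browder argument is used: $|\nabla\bu^N|^{q-2}\nabla\bu^N\rightharpoonup\bm{\chi}$ in $L^{q'}(Q)$, and comparing the discrete equation tested with $\bu^N$ and the limiting equation tested with $\bu$, together with weak lower semicontinuity of the dissipation, yields $\limsup_N\int_Q|\nabla\bu^N|^q\le\int_Q\bm{\chi}:\nabla\bu$ and hence $\bm{\chi}=|\nabla\bu|^{q-2}\nabla\bu$; likewise for $\bo$ and for the boundary terms $|\bu^N|^{q-2}\bu^N$, $|\bo^N|^{q-2}\bo^N$.

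\emph{Identification of $R$ and the energy inequality.} This is the main obstacle. At the discrete level $R^N=\pd_{t,h}^-\rho^N+\div(\rho_h^N\bu^N+\bm{J}^N)$ differs from $-m(\phi_h^N)\nabla\mu^N\cdot\nabla\rho'(\phi_h^N)$ by a discrete chain-rule defect. Writing $\rho^N-\rho_h^N=\rho'(\phi_h^N)(\phi^N-\phi_h^N)+O(|\phi^N-\phi_h^N|^2)$ pointwise (using $\rho''\in L^\infty(\R)$) gives $\pd_{t,h}^-\rho^N=\rho'(\phi_h^N)\pd_{t,h}^-\phi^N+O(h\,|\pd_{t,h}^-\phi^N|^2)$, whose defect is $O(h)$ in $L^1(Q)$ since $\sigma$ is fixed and $\pd_{t,h}^-\phi^N$ is bounded in $L^2(Q)$. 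Substituting \eqref{Time:phi} for $\pd_{t,h}^-\phi^N$, using $\div\bu^N=0$ so the transport terms cancel, and inserting $\bm{J}^N=-\rho'(\phi_h^N)m(\phi_h^N)\nabla\mu^N$, one reduces to $R^N=-m(\phi_h^N)\rho''(\phi_h^N)\,\nabla\mu^N\cdot\nabla\phi_h^N+o(1)$ in $L^1(Q)$. To pass to the limit in $\int_0^T(R^N\bm{y}^N,\bm{s})_\Omega\,dt$ one pairs the weakly convergent $\nabla\mu^N$ against $m(\phi_h^N)\rho''(\phi_h^N)(\nabla\phi_h^N\cdot\bm{y}^N\cdot\bm{s})$: the coefficient converges strongly and boundedly, $\nabla\phi_h^N$ converges strongly in $L^2(0,T;L^r)$ for some $r>2$ by interpolating the $\VV^1$ and $\VV^2$ bounds with strong $L^2(Q)$-convergence, and $\bm{y}^N$ converges strongly in the complementary space using the $L^q(0,T;\WW^{1,q})\hookrightarrow L^q(0,T;L^\infty)$ bound with $q\ge 12$ and the extra smoothness $\bm{s}\in L^\infty(0,T;\WW^{1,4}\cap\HH^{3/2+\delta})$ -- precisely the hypotheses in the statement -- so the product converges strongly in $L^2(Q)$ and the pairing with $\nabla\mu^N$ converges to $-\int_0^T\int_\Omega m(\phi)\rho''(\phi)\nabla\mu\cdot\nabla\phi\,(\bm{y}\cdot\bm{s})=\int_0^T(R\bm{y},\bm{s})_\Omega$ with $R=-m(\phi)\nabla\mu\cdot\nabla\rho'(\phi)$, giving \eqref{weak:app:R:u}--\eqref{weak:app:R:w}. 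Finally, the energy inequality \eqref{energy:ineq:approx} follows by passing to the limit in \eqref{energy:ineq:bdd:int}: weak lower semicontinuity of the norms and dissipation integrals on the left, convergence of the right-hand side at almost every $s$ (and at $s=0$ via \eqref{initial:FG}) using that $\mathcal E^N$ is non-increasing together with lower semicontinuity of $E_\kappa$ along the weakly convergent subsequence, while the $C_w$-regularity and the attainment of the initial data are read off from the uniform bounds as in \cite{ADG,GGW}.
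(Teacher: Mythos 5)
Your overall skeleton (energy bounds from \eqref{energy:ineq:bdd:int}, discrete Aubin--Lions, weak--strong limit passage, Minty for the $q$-Laplacian, lower semicontinuity for the energy inequality) matches what the paper delegates to \cite{GGW,ADG}; the one step the paper actually proves in detail is the convergence of $\int_0^T(R^N\bm{y}^N,\bm{s})_\Omega\,dt$, and there you take a genuinely different route. The paper never expands $R^N$: it keeps the conservative form $R^N=\pd_{t,h}^-\rho^N+\div(\rho_h^N\bu^N+\bm{J}^N)$, integrates by parts to get $(\pd_{t,h}^-\rho^N,\bm{y}^N\cdot\bm{s})_\Omega-(\rho_h^N\bu^N+\bm{J}^N,\nabla(\bm{y}^N\cdot\bm{s}))_\Omega$, passes to the limit using the weak convergence of $\pd_{t,h}^-\rho^N$ and $\bm{J}^N$ against the \emph{strong} convergences $\bm{y}^N\to\bm{y}$ in $L^p(0,T;L^4)$ and $\nabla\bm{y}^N\to\nabla\bm{y}$ in $L^2(0,T;L^4)$, and then identifies the limiting functional as $\int_0^T(R\bm{y},\bm{s})_\Omega\,dt$ by inserting $\tilde\xi=\rho'(\phi)\bm{y}\cdot\bm{s}$ (admissible in $H^1(\Omega)$ precisely because $q\ge 12$ and $\bm{s}\in\WW^{1,4}\cap\HH^{3/2+\delta}$) into the limit Cahn--Hilliard equation. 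You instead perform a discrete chain-rule expansion of $\pd_{t,h}^-\rho^N$, cancel the transport terms via \eqref{Time:phi} and $\div\bu^N=0$, and pair $\nabla\mu^N$ (weak in $L^2(Q)$) against $m(\phi_h^N)\rho''(\phi_h^N)\nabla\phi_h^N(\bm{y}^N\cdot\bm{s})$ (strong in $L^2(Q)$). This is a legitimate alternative: it avoids the paper's need for strong convergence of $\nabla\bm{y}^N$, at the price of having to control the chain-rule defect and the pointwise product $\nabla\mu^N\cdot\nabla\rho'(\phi_h^N)$ at the discrete level, which the paper's conservative-form argument sidesteps entirely.

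Two steps in your version are stated more loosely than they can be executed. First, the defect $\tfrac{h}{2}\rho''(\cdot)|\pd_{t,h}^-\phi^N|^2$ is indeed $O(h)$ in $L^1(Q)$, but you then pair it with $\bm{y}^N\cdot\bm{s}$, which is only bounded in $L^q(0,T;L^\infty(\Omega))$, not in $L^\infty(Q)$; an $L^1(Q)$-smallness alone does not close this term. It can be repaired: interpolating $\pd_{t,h}^-\phi^N$ between its $L^2(Q)$ bound and the trivial $L^\infty(0,T;L^2)$ bound of order $h^{-1}$ gives $\|\pd_{t,h}^-\phi^N\|_{L^{2q'}(0,T;L^2)}\lesssim h^{-1/q}$, so the paired defect is $O(h^{1-2/q})\to 0$ using exactly $q\ge 12$; you should make this explicit, since it is where the hypothesis on $q$ enters your argument. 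Second, the strong $L^2(Q)$-convergence of $m(\phi_h^N)\rho''(\phi_h^N)\nabla\phi_h^N(\bm{y}^N\cdot\bm{s})$ should be justified via the uniform bound $\|\nabla\phi_h^N(\bm{y}^N\cdot\bm{s})\|_{L^2(\Omega)}\le\|\nabla\phi_h^N\|_{L^2}\|\bm{y}^N\|_{L^\infty}\|\bm{s}\|_{L^\infty}\in L^q(0,T)$ with $q>2$, combined with a.e.\ (or $L^2(Q)$-strong) convergence of each factor and Vitali; the interpolation you sketch is in the right direction but should be pinned down to show the product, not just the factors, converges in $L^2(Q)$. With these two points tightened, your proof of the $R$-term is sound and the rest coincides with the paper's (outsourced) arguments.
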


\begin{proof}
The compactness assertions for $\bu$, $\mu$, $(\phi, \psi)$, $\rho$, $\bm{J}$, $F_\kappa$, $G$ and $\mathcal{L}(\psi)$ can be obtained by the same arguments in \cite{GGW}, while the assertions for $\bo$ follows analogously as for $\bu$. Thus, we omit them and sketch the details for the weak convergence of $R^N \bm{y}^N$, for $\bm{y} \in \{\bu, \bo\}$, which is not explicitly stated in \cite{GGW}. To start, we use the definition \eqref{defn:JRN} of $R^N$ to deduce that for $\bm{y} \in \{\bu, \bo\}$:
\begin{align}\label{RNyN:weak}
(R^N \bm{y}^N, \bm{s})_\Omega = (\pd_{t,h}^- \rho^N, \bm{y}^N \cdot \bm{s})_\Omega - (\rho_h^N \bu^N + \bm{J}^N, \nabla (\bm{y}^N \cdot \bm{s}))_\Omega
\end{align}
for any smooth test function $\bm{s} \in \C^\infty_*(\overline{\Omega})$. Lipschitz continuity of $\rho$ and boundedness of $\pd_{t,h}^- \phi^N$ in $L^2(Q)$ implies $\pd_{t,h}^- \rho^N$ is bounded in $L^2(Q)$. Together with the boundedness of $\rho_h^N \bm{y}^N + \bm{J}^N$ in $L^2(Q)$, as well as the fact that $\bm{y}^N \cdot \bm{s}$ is bounded in $\HH^1$ for any $\bm{s} \in \HH^{3/2+\delta}$ with $\delta > 0$, we obtain
\begin{align*}
\Big |\int_Q R^N \bm{y}^N \cdot \bm{s} \, dx \, dt \Big | \leq C \| \bm{s} \|_{L^\infty(0,T;\HH^{\frac{3}{2} + \delta})}.
\end{align*}
Choosing $\delta \geq \frac{1}{4}$ yields the embedding $\bm{s} \in L^\infty(0,T;\HH^{3/2+\delta}) \subset L^8(0,T;\WW^{1,4})$, which means we may consider in \eqref{RNyN:weak} a test function $\bm{s} \in L^8(0,T;\WW^{1,4})$. 

Next, in addition to the weak convergences stated in Lemma \ref{lem:compact:N} we collect the strong convergences in \cite{GGW} for $\bm{y}^N$ deduced via interpolation
\begin{align*}
\bm{y}^N \to \bm{y} & \text{ strongly in } L^p(0,T;L^{4}(\Omega;\R^d)) \text{ for any } p \in [1,8/3), \\
\nabla \bm{y}^N \to \nabla \bm{y} & \text{ strongly in } L^2(0,T;L^4(\Omega;\R^{d \times d})),
\end{align*}
so that when combined with the weak convergences of $\bm{J}^N$ in $L^2(Q)$ and of $\pd_{t,h}^- \rho^N$ in $L^2(Q)$ we find that for arbitrary $\bm{s} \in L^8(0,T;\WW^{1,4} \cap \HH^{\frac{3}{2}+\delta})$, $\delta > 0$:
\begin{align*}
\int_0^T (\pd_{t,h}^- \rho^N - \rho'(\phi) \pd_t \phi , \bm{y} \cdot \bm{s})_\Omega + (\rho^N_h \bu^N - \rho(\phi) \bu + \bm{J}^N - \bm{J}, \nabla (\bm{y} \cdot \bm{s}))_\Omega \, dt \to 0.
\end{align*}
In the above we have used that $\bm{y} \cdot \bm{s} \in L^2(0,T;H^1(\Omega))$, and
\begin{align*}
& \Big | \int_0^T (\pd_{t,h}^- \rho^N , (\bm{y}^N - \bm{y}) \cdot \bm{s})_\Omega + (\rho_h^N \bu^N + \bm{J}^N, \nabla ( (\bm{y}^N - \bm{y}) \cdot \bm{s}))_\Omega \, dt  \Big | \\
& \quad \leq \|\pd_{t,h}^- \rho^N \|_{L^2(Q)} \| \bm{y}^N - \bm{y} \|_{L^2(0,T;L^4)} \| \bm{s} \|_{L^{\infty}(0,T;L^4)} \\
& \qquad + \big ( \| \rho_h^N \bu^N \|_{L^2(Q)} + \|\bm{J}^N \|_{L^2(Q)} \big ) \| \bm{y}^N - \bm{y} \|_{L^2(0,T;W^{1,4})} \| \bm{s} \|_{L^\infty(0,T;W^{1,4})} \\
& \quad  \to 0.
\end{align*}
Hence, it holds that 
\begin{align}\label{RNyN:lim}
\int_0^T (R^N \bm{y}^N, \bm{s})_\Omega \, dt \to \int_0^T (\rho'(\phi) \pd_t \phi, \bm{y} \cdot \bm{s})_\Omega - (\rho(\phi) \bu + \bm{J}, \nabla (\bm{y} \cdot \bm{s}))_\Omega \, dt.
\end{align}
Then, passing to the limit $N \to \infty$ in \eqref{Time:phi:2} yields
\[
0 = \int_Q - \phi \pd_t \xi - \phi \bu \cdot \nabla \xi + m(\phi) \nabla \mu \cdot \nabla \xi \, dx \, dt
\]
for arbitrary $\xi \in C^\infty_0(0,T;H^1(\Omega))$, which also yields the equivalent strong formulation
\[
0 = (\pd_t \phi, \tilde{\xi})_\Omega + (\bu \cdot \nabla \phi, \tilde{\xi})_\Omega + (m(\phi) \nabla \mu, \nabla \tilde{\xi})_\Omega 
\]
for arbitrary $\tilde{\xi} \in H^1(\Omega)$ and for a.e.~$t \in [0,T]$.  

Then, for $q \geq 12$ we find that $\tilde{\xi} = \rho'(\phi(t)) \bm{y}(t) \cdot \bm{s}(t) \in H^1(\Omega)$ is an admissible test function, so that when we compare the resulting identity with \eqref{RNyN:lim} we infer that
\[
\int_0^T (R^N \bm{y}^N, \bm{s})_\Omega \, dt \to - \int_Q (m(\phi) \nabla \mu \cdot \nabla \phi) \bm{y} \cdot \bm{s} \, dx \, dt = \int_0^T (R \bm{y} , \bm{s} )_\Omega \, dt.
\]
Establishing the energy inequality \eqref{energy:ineq:approx} can be done in a similar way as in \cite{GGW} by multiplying the discrete energy inequality \eqref{energy:ineq:bdd} with $\eta \in W^{1,1}(0,T)$ such that $\eta \geq 0$ and $\eta(T) = 0$, integrating by parts and applying the strong convergences of $\bu^N(t)$, $\bo^N(t)$ in $\HH$, of $(\phi^N(t), \psi^N(t))$ in $C^0(\overline{\Omega}) \times C^0(\Gamma)$ for a.e.~$t \in (0,T)$, and the lower semicontinuity of the norms.
\end{proof}

\subsection{Proof of Theorem \ref{thm:exist}}
With the establishment of Lemma \ref{lem:compact:N}, Step 1 of the proof of Theorem \ref{thm:exist} is complete. To continue, we first refine the assumption \eqref{ass:rho:approx} on the mass density function $\rho$ to further satisfy the requirement
\[
\rho(s) = \frac{\overline{\rho}_1 - \overline{\rho}_2}{2} s + \frac{\overline{\rho}_1 + \overline{\rho}_2}{2} \text{ for } s \in [-1,1].
\]
Thanks to \eqref{F0approx} and \eqref{ass:ini}, the right-hand side of the energy inequality \eqref{energy:ineq:approx} can be bounded uniformly in $\kappa,\sigma, \eps \in (0,1]$, then Step 2 studies the limiting behavior as $\kappa \to 0$ where we obtain the singular logarithmic potential \eqref{log} as well as the boundedness properties $|\phi| < 1$ a.e.~in $Q$ and $|\psi| \leq 1$ a.e.~on $\Sigma$. This shows that the mass density function $\rho(\phi)$ coincides with the affine linear function $\hat{\rho}(\phi)$ defined in \eqref{hat:rho:def}, and that $R = -m(\phi) \nabla \mu \cdot \nabla \rho'(\phi) = 0$. Lastly, in Step 3, from an analogous energy inequality to \eqref{energy:ineq:approx} (with $\kappa = 0$) and almost identical arguments in Step 2 allows us to pass to the limit $\sigma \to 0$ and $\eps \to 0$ to complete the proof.

\section{Obstacle potential and the deep quench limit}\label{sec:obstacle}
In this section we scale the convex singular part $F_0$ in \eqref{log} by a coefficient $\theta > 0$ (corresponding to the absolute temperature) and perform a deep quench limit $\theta \to 0$. In this limit the scaled logarithmic potential approaches a double obstacle potential:
\[
F_{\mathrm{obs}}(s) = \mathbb{I}_{[-1,1]}(s) + F_1(s), \quad \mathbb{I}_{[-1,1]}(s) =\begin{cases} 0 & \text{ if } s \in [-1,1], \\
+\infty & \text{ otherwise},
\end{cases}
\]
where $\mathbb{I}_{[-1,1]}$ is the indicator function of the set $[-1,1]$.  Due to the non-differentiability of $\mathbb{I}_{[-1,1]}$, the subdifferential of $F_{\mathrm{obs}}$ is used to replace $F'$ in \eqref{weak:mu}, which now reads as
\begin{align}\label{obs:mu}
\mu = - \Delta \phi + F_1'(\phi) + \xi, \quad \xi \in \pd \mathbb{I}_{[-1,1]}(\phi) = \begin{cases}
[0,\infty) & \text{ if } |\phi| = 1 \\
\{0\} & \text{ if } |\phi| < 1, \\
(-\infty,0] & \text{ if } |\phi| = -1.
\end{cases}
\end{align}
An alternative formulation of \eqref{obs:mu} as a variational inequality can be achieved by considering the convex set
\[
\mathcal{K} := \{ f \in H^1(\Omega) \, : \, |f| \leq 1 \text{ a.e.~in } \Omega \},
\]
and by recalling the following inequality for the subgradient $\xi \in \pd \mathbb{I}_{[-1,1]}(\phi)$:
\[
0 = \mathbb{I}_{[-1,1]}(\eta) - \mathbb{I}_{[-1,1]}(\phi) \geq \xi (\eta - \phi)
\]
for all $\eta, \phi$ satisfying $|\eta| \leq 1$ and $|\phi| \leq 1$. Hence, for arbitrary $(\eta, \zeta) \in \VV^1$ with $\eta \in \mathcal{K}$, we test \eqref{obs:mu} with $\eta - \phi$ and invoke \eqref{weak:L} after integrating by parts to see
\begin{equation}\label{obs:vi}
\begin{aligned}
0 & \leq (\nabla \phi, \nabla (\eta- \phi))_\Omega + (F_1'(\phi) - \mu, \eta - \phi)_\Omega \\
& \quad + (\nS \psi, \nS (\zeta - \psi))_\Gamma + (\zeta \psi + G'(\psi) - \mathcal{L}(\psi), \zeta - \psi)_\Gamma.
\end{aligned}
\end{equation}
Alternatively, using \eqref{weak:psi} we can recast \eqref{obs:vi} as
\begin{equation}\label{obs:vi:2}
\begin{aligned}
0 & \leq (\nabla \phi, \nabla (\eta- \phi))_\Omega + (F_1'(\phi) - \mu, \eta - \phi)_\Omega \\
& \quad + (\nS \psi, \nS (\zeta - \psi))_\Gamma + (\zeta \psi + G'(\psi) + \bu_\tau \cdot \nS \psi, \zeta - \psi) \\
& \quad + \inn{\pd_t \psi}{\zeta - \psi}_{H^1(\Gamma)}.
\end{aligned}
\end{equation}

\begin{defn}\label{defn:weaksoln:obs}
A quintuple of functions $(\bu, \bo, \bm{J}, (\phi, \psi))$ is a weak solution to \eqref{ana:bulk}-\eqref{ana:bc} with the double obstacle potential $F_{\mathrm{obs}}$ on the time interval $[0,T]$ if the following properties are satisfied:
\begin{itemize}
\item Regularity
\begin{equation*}
\begin{aligned}
\bu & \in C_w([0,T]; \HH_{\div}) \cap L^2(0,T;\HH^1), \\
 \bo & \in C_w([0,T]; \HH) \cap L^2(0,T;\HH^1), \\
(\phi, \psi) & \in C_w([0,T];\VV^1) \cap L^2(0,T;\VV^2), \\
 \mu & \in L^2(0,T;H^1(\Omega)),
\end{aligned}
\end{equation*}
along with $|\phi| \leq 1$ a.e.~in $Q$ and $|\psi| \leq 1$ a.e.~on $\Sigma$.
\item Equations \eqref{weak:ns}, \eqref{weak:w}, \eqref{weak:phi}, \eqref{weak:psi} holding for all $\bm{v} \in C^\infty_0(0,T;\C^\infty_{\div}(\overline{\Omega}))$, $\bm{z} \in C^\infty_0(0,T;\C^\infty_*(\overline{\Omega}))$, $\xi \in C^\infty_0(0,T;C^1(\overline{\Omega}))$, and $\Upsilon \in C^\infty_0(0,T;C^0(\Gamma))$, along with \eqref{weak:J}, \eqref{weak:rho}, \eqref{weak:L} and
\begin{equation}\label{weak:vi}
\begin{aligned}
0 & \leq (\nabla \phi, \nabla (\eta- \phi))_Q + (F_1'(\phi) - \mu, \eta - \phi)_Q \\
& \quad + (\nS \psi, \nS (\zeta - \psi))_\Sigma + (\zeta \psi + G'(\psi) - \mathcal{L}(\psi), \zeta - \psi)_\Sigma,
\end{aligned}
\end{equation}
holding for all $(\eta, \psi) \in L^2(0,T;\VV^1)$ with $\eta \in L^2(0,T;\mathcal{K})$.
\item Energy inequality \eqref{energy:ineq} holds for all $t \in [s,\infty)$ and almost all $s \in [0,\infty)$ including $s = 0$.
\item Initial conditions
\[
(\bu, \bo, \phi, \psi) \vert_{t = 0} = (\bu_0, \bo_0, \phi_0, \mathrm{tr}(\phi_0)).
\]
\end{itemize}
\end{defn}
We point out that the main difference with Definition \ref{defn:weaksoln} is that $\phi$ is allowed to attain the values $\pm 1$. The main result for this section is formulated as follows:
\begin{thm}\label{thm:exist:vi}
Under Assumption \ref{ass:main}, for any $T \in (0,\infty)$ there exists a weak solution to \eqref{ana:bulk}-\eqref{ana:bc} with double obstacle potential $F_{\mathrm{obs}}$ in the sense of Definition \ref{defn:weaksoln:obs}.
\end{thm}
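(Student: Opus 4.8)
The plan is to obtain the double obstacle solution as the limit $\theta \to 0$ of logarithmic solutions supplied by Theorem~\ref{thm:exist}. For $\theta \in (0,1]$ I would set $F^\theta := \theta F_0 + F_1$ with $F_0$ as in \eqref{log}. For each fixed $\theta$ the logarithmic singularity of $\theta F_0$ persists and $\theta F_0 \ge 0$ is bounded on $[-1,1]$ by $2\theta \ln 2$, so \eqref{ass:G} and \eqref{ass:F} remain valid and Theorem~\ref{thm:exist} yields a weak solution $(\bu_\theta, \bo_\theta, \mu_\theta, (\phi_\theta, \psi_\theta))$ in the sense of Definition~\ref{defn:weaksoln} with $F$ replaced by $F^\theta$; in particular $|\phi_\theta| < 1$ a.e.\ in $Q$ and $|\psi_\theta| \le 1$ a.e.\ on $\Sigma$. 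Since $|\phi_0| \le 1$ we have $0 \le \int_\Omega \theta F_0(\phi_0)\,dx \le 2\theta\ln 2\,|\Omega| \to 0$, so the right-hand side of the energy inequality \eqref{energy:ineq} written with $F^\theta$ is bounded uniformly in $\theta$; hence the dissipation terms there yield uniform bounds for $\bu_\theta, \bo_\theta$ in $L^\infty(0,T;\HH) \cap L^2(0,T;\HH^1)$, for $(\phi_\theta,\psi_\theta)$ in $L^\infty(0,T;\VV^1)$, for $\nabla\mu_\theta$ in $L^2(Q)$, and for $\mathcal{L}(\psi_\theta)$ in $L^2(\Sigma)$.

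I expect the main obstacle to be the one estimate that genuinely depends on $\theta$: a uniform bound on the singular part $\theta F_0'(\phi_\theta)$, needed to control the spatial mean of $\mu_\theta$ and hence $\mu_\theta$ itself in $L^2(0,T;H^1(\Omega))$ (note $\theta F_0'$ does not vanish in the limit). Because $\pdnu\mu_\theta = 0$, $\div\bu_\theta = 0$ and $\bu_\theta \cdot \bnu = 0$, the bulk mean $\tfrac{1}{|\Omega|}\int_\Omega \phi_\theta\,dx$ is conserved and equals $\overline{\phi}_0 := \tfrac{1}{|\Omega|}\int_\Omega \phi_0\,dx \in (-1,1)$ by \eqref{ass:ini}, so the classical sign estimate $\theta F_0'(s)(s - \overline{\phi}_0) \ge c_* |\theta F_0'(s)| - C$, with $c_*, C$ depending only on $\overline{\phi}_0$, is available. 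Testing \eqref{weak:mu} with $\phi_\theta - \overline{\phi}_0$, integrating by parts, using \eqref{weak:L} and $\int_\Gamma \Delta_\Gamma\psi_\theta = 0$, and invoking the bounds already obtained for $\nabla\mu_\theta$, $\mathcal{L}(\psi_\theta)$, $(\phi_\theta,\psi_\theta) \in \VV^1$ and $G' \in L^\infty$ (as $|\psi_\theta| \le 1$), I would deduce that $\theta F_0'(\phi_\theta)$ is bounded in $L^2(0,T;L^1(\Omega))$, whence $\|\mu_\theta\|_{L^2(0,T;H^1(\Omega))} \le C$ by Poincar\'e and $(\phi_\theta, \psi_\theta)$ is bounded in $L^2(0,T;\VV^2)$ by elliptic regularity for the Wentzell system \eqref{weak:mu}, \eqref{weak:L}. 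After this the compactness extraction proceeds as in the proof of Theorem~\ref{thm:exist}, the relevant estimates being uniform in $\theta$: along a subsequence, $\bu_\theta \to \bu$, $\bo_\theta \to \bo$ strongly in $L^2(0,T;\HH)$ and weakly in $L^2(0,T;\HH^1)$; $(\phi_\theta,\psi_\theta) \to (\phi,\psi)$ weakly-$*$ in $L^\infty(0,T;\VV^1)\cap L^2(0,T;\VV^2)$ and, by Aubin--Lions using the $\pd_t\phi_\theta$ and $\pd_t\psi_\theta$ bounds read off from \eqref{weak:phi}--\eqref{weak:psi}, strongly in $L^2(0,T;H^1(\Omega)) \times L^2(0,T;H^1(\Gamma))$ and a.e.; $\mu_\theta \to \mu$ weakly in $L^2(0,T;H^1(\Omega))$; and $-\tfrac{\overline{\rho}_1 - \overline{\rho}_2}{2} m(\phi_\theta)\nabla\mu_\theta \to \bm{J} = -\tfrac{\overline{\rho}_1 - \overline{\rho}_2}{2} m(\phi)\nabla\mu$ weakly in $L^2(Q)$. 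Since $|\phi_\theta| < 1$ and $\phi_\theta \to \phi$ a.e., also $|\phi| \le 1$ a.e.\ in $Q$ and $|\psi| \le 1$ a.e.\ on $\Sigma$.

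To finish, I would pass to the limit in the weak formulation. The passages in \eqref{weak:ns}, \eqref{weak:w}, \eqref{weak:phi}, \eqref{weak:psi}, \eqref{weak:J}, \eqref{weak:rho}, \eqref{weak:L} are routine given the convergences above, the nonlinear transport and capillary terms being treated exactly as in Theorem~\ref{thm:exist} (using strong convergence of the products $\bu_{\theta,i}\bu_{\theta,j}$ and of $\nabla\phi_\theta$). The new ingredient is the variational inequality \eqref{weak:vi}: for test functions $(\eta,\zeta) \in L^2(0,T;\VV^1)$ with $\eta \in L^2(0,T;\mathcal{K})$, convexity of $F_0$ gives pointwise $\theta F_0'(\phi_\theta)(\eta - \phi_\theta) \le \theta F_0(\eta) - \theta F_0(\phi_\theta) \le \theta F_0(\eta)$; substituting $\theta F_0'(\phi_\theta) = \mu_\theta + \Delta\phi_\theta - F_1'(\phi_\theta)$ from \eqref{weak:mu}, integrating over $Q$, integrating by parts and using \eqref{weak:L} with $\zeta = \mathrm{tr}(\eta)$, one arrives at
\begin{align*}
& (\mu_\theta - F_1'(\phi_\theta), \eta - \phi_\theta)_Q - (\nabla\phi_\theta, \nabla(\eta-\phi_\theta))_Q \\
& \qquad + (\mathcal{L}(\psi_\theta) - \zeta\psi_\theta - G'(\psi_\theta), \zeta - \psi_\theta)_\Sigma - (\nS\psi_\theta, \nS(\zeta-\psi_\theta))_\Sigma \;\le\; \int_Q \theta F_0(\eta)\,dx\,dt .
\end{align*}
As $\theta \to 0$ the right-hand side vanishes because $|\eta| \le 1$, while the left-hand side converges to its analogue in $(\phi,\psi,\mu)$ by the strong $L^2(0,T;H^1)$ convergences of $\phi_\theta, \psi_\theta$ (and a.e.\ convergence) together with the weak $L^2(Q)$ convergence of $\mu_\theta$; rearranging yields \eqref{weak:vi}. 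Finally, the energy inequality \eqref{energy:ineq} for the limit is obtained by passing to the limit in that for $F^\theta$, using weak (respectively, via the strong convergences, strong) lower semicontinuity of the dissipation and energy functionals, the uniform bound $0 \le \theta F_0(\phi_\theta) \le 2\theta \ln 2 \to 0$, the identity $\int_\Omega \mathbb{I}_{[-1,1]}(\phi) = 0$ since $|\phi| \le 1$, the strong convergence for a.e.\ $s$ of $\bu_\theta(s), \bo_\theta(s), (\phi_\theta,\psi_\theta)(s)$ on the right-hand side, and $\int_\Omega F^\theta(\phi_0)\,dx \to \int_\Omega F_{\mathrm{obs}}(\phi_0)\,dx$. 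This produces a weak solution in the sense of Definition~\ref{defn:weaksoln:obs}.
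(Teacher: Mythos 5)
Your strategy is the paper's own: solve with the scaled potential $\theta F_0 + F_1$ via Theorem~\ref{thm:exist}, note that the initial energy is bounded uniformly in $\theta$ because $|\phi_0|\le 1$, read off the uniform bounds from the energy inequality, control the spatial mean of $\mu_\theta$ through the conserved mean of $\phi_\theta$ and the sign property of $F_0'$, and derive the variational inequality \eqref{weak:vi} by testing \eqref{weak:mu} with $\eta-\phi_\theta$, using convexity and nonnegativity of $F_0$ so that the only remainder is $\theta(F_0(\eta),1)_Q\to 0$ for $|\eta|\le 1$. This matches the paper's Section~\ref{sec:obstacle} step for step, including the details you reconstructed for the $L^2(0,T;L^1(\Omega))$ bound on $\theta F_0'(\phi_\theta)$ and the resulting $L^2(0,T;H^1(\Omega))$ bound on $\mu_\theta$ (the paper simply cites Step 1 of \cite[Section 7, Theorem 3.5]{GGM} for this).

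There is, however, one genuine gap: you jump from the $L^2(0,T;L^1(\Omega))$ bound on $\theta F_0'(\phi_\theta)$ directly to the $L^2(0,T;\VV^2)$ bound ``by elliptic regularity for the Wentzell system''. The regularity estimate in question (\cite[Lemma A.1]{MiranZelik} or \cite[Lemma B.4]{GGW}) requires the bulk right-hand side $h_1=\mu_\theta-\theta F_0'(\phi_\theta)-F_1'(\phi_\theta)$ to be controlled in $L^2(\Omega)$, uniformly in $\theta$; an $L^1$-in-space bound on the singular term cannot yield an $H^2$-type conclusion. The paper inserts exactly the missing estimate before invoking the Wentzell regularity: following Step 2 of \cite[Section 7, Theorem 3.5]{GGM}, one shows that $\sqrt{\theta}\,F_0'(\phi_\theta)$ is bounded in $L^2(0,T;L^2(\Omega))$ (and $F_0(\psi_\theta)$ in $L^\infty(0,T;L^1(\Gamma))$), an argument that exploits the monotonicity of $F_0'$ and, crucially, the compatibility condition $F_0'(s)(\zeta s+G'(s))\ge -C_M$ from \eqref{ass:F} to absorb the boundary terms produced by the dynamic boundary condition \eqref{weak:L}. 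Without this (or an equivalent monotonicity argument handling the boundary contribution) your chain stops short of the $\VV^2$ bound, and that bound is not cosmetic: it is what makes $\pdnu\phi_\theta$ and the a.e.\ identity for $\mathcal{L}(\psi_\theta)$ meaningful, it underlies the $L^{4/3}(0,T;L^2(\Gamma))$ bound on $\pd_t\psi_\theta$, and it supplies the Aubin--Lions compactness (strong convergence of $\nabla\phi_\theta$ and $\nS\psi_\theta$) that you yourself invoke to pass to the limit in the boundary terms and in the variational inequality. Once this estimate is added, the remainder of your limit passage and the treatment of the energy inequality coincide with the paper's argument.
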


\subsection{Approximation scheme and uniform boundedness}
For $\theta \in (0,1]$ we consider the scaled logarithmic potential
\[
F_\theta(s) = \theta ((1+s) \ln (1+s) + (1-s) \ln (1-s)) + F_1(s) =: \theta F_0(s) + F_1(s),
\]
and denote by $(\bu_\theta, \bo_\theta, \mu_\theta, (\phi_\theta, \psi_\theta))$ as the weak solution associated to \eqref{ana:bulk}-\eqref{ana:bc} and initial conditions $(\bu_0, \bo_0, (\phi_0, \psi_0))$ with $F_\theta$ replacing $F$ in the sense of Definition \ref{defn:weaksoln}. Since $F_0 \in C^0([-1,1]) \cap C^2((-1,1))$ and $|\phi_0| \leq 1$ a.e.~in $\Omega$ (see \eqref{ass:ini}), it holds that $\theta F_0(\phi_0) \in L^1(\Omega)$ uniformly in $\theta$.  Hence, by Theorem \ref{thm:exist}, the energy inequality \eqref{energy:ineq} is valid for $(\bu_\theta, \bo_\theta, \mu_\theta, (\phi_\theta, \psi_\theta))$ with initial energy $E(\bu_0, \bo_0, \phi_0, \psi_0)$ bounded uniformly in $\theta \in [0,1]$. Hence, we deduce the following uniform estimates:
\begin{align*}
\bu_\theta, \bo_\theta & \text{ bounded in } L^\infty(0,T;\HH) \cap L^2(0,T;\HH^1), \\
\nabla \mu_\theta & \text{ bounded in } L^2(0,T;L^2(\Omega;\R^d)), \\
\mathcal{L}(\psi_\theta) & \text{ bounded in } L^2(\Sigma), \\
(\phi_\theta, \psi_\theta) & \text{ bounded in } L^\infty(0,T;\VV^1), \\
\theta F_0(\phi_\theta) & \text{ bounded in } L^\infty(0,T;L^1(\Omega)), \\
G(\psi_{\theta}) & \text{ bounded in } L^\infty(0,T;L^1(\Gamma)).
\end{align*}
Next, from \eqref{weak:phi} we infer that
\[
\int_\Omega \phi_\theta(t) \, dx = \int_\Omega \phi_0 \, dx, \quad \pd_t \phi_\theta \text{ bounded in } L^2(0,T;H^1(\Omega)^*).
\]
Subsequently, arguing as in Step 1 of \cite[Section 7, Theorem 3.5]{GGM} entails that
\begin{align*}
\theta F_0'(\phi_\theta) & \text{ bounded in } L^2(0,T;L^1(\Omega)), \\
\mu_\theta & \text{ bounded in } L^2(0,T;H^1(\Omega)),
\end{align*}
while the arguments in Step 2 of \cite[Section 7, Theorem 3.5]{GGM} furnish us with
\begin{align*}
\sqrt{\theta} F_0'(\phi_\theta) & \text{ bounded in } L^2(0,T;L^2(\Omega)), \\
F_0(\psi_\theta) & \text{ bounded in } L^\infty(0,T;L^1(\Gamma)).
\end{align*}
In particular, regularity theory with the Wenztell Laplacian (\cite[Lemma A.1]{MiranZelik} or \cite[Lemma B.4]{GGW}) applied to \eqref{weak:mu} and \eqref{weak:L} provides the following
\[
(\phi_\theta, \psi_\theta) \text{ bounded in } L^2(0,T;\VV^2).
\]
Together with the boundedness of $\bu_\theta$ in $L^2(0,T;\HH^1)$, from \eqref{weak:psi} we deduce that $\bu_{\theta,\tau} \cdot \nS \psi_\theta$ is bounded in $L^{\frac{4}{3}}(0,T;L^2(\Gamma))$ in three spatial dimensions. This yields that
\[
\pd_t \psi_\theta \text{ bounded in } L^{\frac{4}{3}}(0,T;L^2(\Gamma)).
\]
\subsection{Passing to the limit}
The above uniform boundedness are sufficient to pass to the limit $\theta \to 0$ to obtain limit functions $(\bu, \bo, \mu, (\phi, \psi))$ satisfying equations \eqref{weak:ns}, \eqref{weak:w}, \eqref{weak:phi}, \eqref{weak:psi} holding for all $\bm{v} \in C^\infty_0(0,T;\C^\infty_{\div}(\overline{\Omega}))$, $\bm{z} \in C^\infty_0(0,T;\C^\infty_*(\overline{\Omega}))$, $\xi \in C^\infty_0(0,T;C^1(\overline{\Omega}))$, and $\Upsilon \in C^\infty_0(0,T;C^0(\Gamma))$, along with \eqref{weak:J} and \eqref{weak:rho}, \eqref{weak:L}. The argument is identical to Step 3 of the proof of Theorem \ref{thm:exist} and thus we omit the details.  Let us consider an arbitrary $(\eta, \zeta) \in L^2(0,T;\VV^1)$ with $\eta \in L^2(0,T;\mathcal{K})$ and test \eqref{weak:mu} with $\eta - \phi_\theta$ to obtain
\begin{align*}
& ( F_1'(\phi_\theta) - \mu, \eta - \phi_\theta)_Q + (\nabla \phi_\theta, \nabla (\eta - \phi_\theta))_Q \\
& \qquad + (\nS \psi_\theta, \nS (\zeta - \psi_\theta))_\Sigma + ( \zeta \psi_\theta + G'(\psi_\theta) - \mathcal{L}(\psi_\theta), \zeta - \psi_\theta)_\Sigma  \\
& \quad = - \theta(F_0'(\phi_\theta), \eta - \phi_\theta)_Q \geq \theta (F_0(\phi_\theta), 1)_Q - \theta (F_0(\eta), 1)_Q \geq - \theta (F_0(\eta), 1)_Q,
\end{align*}
where we have used the convexity and non-negativity of $F_0$.  Integrating over $(0,T)$ and passing to the limit $\theta \to 0$ yields \eqref{weak:vi}.  This completes the proof of Theorem \ref{thm:exist:vi}.

\section*{Acknowledgements}
\noindent KFL gratefully acknowledges the support by the Research Grants Council of the Hong Kong Special Administrative Region, China [Project No.: HKBU 14303420].

\footnotesize

\end{document}